\documentclass[a4paper,10pt]{article}
\usepackage[utf8x]{inputenc}
\usepackage{amsmath,amsthm,amssymb}
\usepackage{amsfonts}
\usepackage{amssymb}

\title{Separation of analytic sets by rectangles of low complexity\footnote{\emph{2010 Mathematics Subject Classification}. Primary: 03E15, Secondary: 54H05.\newline
\emph{Keywords and phrases}. Borel class, separation, rectangle, dichotomy, Hurewicz, product.}}
\author{Rafael Zamora\footnote{Universit\'e Paris 6, Institut de Math\'ematiques de Jussieu, Projet Analyse Fonctionnelle, Couloir 15-16, 5\`eme  \'etage, Case 247, 4, place Jussieu, 75 252 Paris Cedex 05, France.
rafael.zamora@imj-prg.fr} \footnote{I would like to thank Dominique Lecomte for all the remarks and the extensive proofreading he provided, his help was essential.}}
\date{ }

\newtheorem{thm}{Theorem}[section]
\newtheorem{lem}[thm]{Lemma}
\newtheorem{Cor}[thm]{Corollary}
\newtheorem{Pro}[thm]{Proposition}
\newtheorem{con}[thm]{Conjecture}

\theoremstyle{definition}
\newtheorem*{defi}{Definition}
\newtheorem*{nota}{Notation}

\begin{document}

\maketitle

\newcommand{\cantor}{2^\omega}
\newcommand{\baire}{\omega^\omega}
\newcommand{\bbA}{\mathbb{A}}
\newcommand{\bbB}{\mathbb{B}}
\newcommand{\bbX}{\mathbb{X}}
\newcommand{\bbY}{\mathbb{Y}}
\newcommand{\bbW}{\mathbb{W}}
\newcommand{\bbP}{\mathbb{P}}
\newcommand{\open}{\mathbf\Sigma^0_1}
\newcommand{\close}{\mathbf{\Pi}^0_1}
\newcommand{\gdelta}{\mathbf{\Pi}^0_2}
\newcommand{\fsigma}{\mathbf\Sigma^0_2}
\newcommand{\taudos}{{\tau_2}}
\newcommand{\semirecursive}{{\Sigma^0_1}}
\newcommand{\Acero}{\Pi_0[A]}
\newcommand{\Auno}{\Pi_1[A]}
\newcommand{\rationals}{\mathbb{P}_f}
\newcommand{\irrationals}{\mathbb{P}_\infty}

\begin{abstract}
We provide Hurewicz tests for the separation of disjoint analytic sets by rectangles of the form $\Gamma\times\Gamma'$ for $\Gamma,\Gamma'\in\{\open,\close,\gdelta\}$.
\end{abstract}

\section{Introduction} 

One of the turning points in Descriptive Set Theory was the realization of the fact that the continuous images of Borel sets are not necessarily Borel, but that they define a new class, that of the analytic sets. However, this class kept some of the nice structural properties of other well known classes. 

A remarkable example was Lusin's Separation Theorem (see for example \cite{K95} for this theorem, basic theory and notation), which states that for any two disjoint analytic subsets of a Polish space, one can find a Borel set which contains one and does not intersect the other (i.e., separates the first one from the other). A nice refinement was offered by A. Louveau and J. Saint-Raymond in \cite{LSR87}, where they gave a test to recognize when two disjoint analytic sets can be separated by a set of a given Borel class. 

A different, but somewhat related question was first answered by A. Kechris, S. Solecki and S. Todorcevic in \cite{KST99}: given an analytic graph $G$ (i.e., a symmetric irreflexive relation) on $X$, when does this graph have a countable Borel coloring, i.e., a Borel function from $X$ to $\omega$ whose inverse images of points are $G$-discrete. This question also makes sense for directed graphs, i.e., irreflexive relations. 

We can viewed this as a separation question on $X\times X$: we are trying to separate the diagonal from $G$ by an union of countably many disjoint Borel ``squares'' (i.e., sets of the form $C\times C$). The more general problem of separation of analytic sets by a countable union of Borel rectangles (i.e, sets of the form $C\times D$) was in some way treated by J.H. Silver (see for example, \cite{S09}). Similar questions were then answered by L.A Harrington, A. Kechris and A. Louveau in \cite{HKL90} and by D. Lecomte in \cite{L07}. 

All these results have something in common. They are what is commonly call a Hurewicz-like test. This consist of an example which is usually simple to understand and which in someway embeds into the sets which do not satisfy a certain property. So for example, in Hurewicz's original theorem, we obtain a set that continuously embeds in all analytic sets which are not $\fsigma$.

In \cite{L07} and \cite{L09}, D. Lecomte studies the separation of analytic sets by a countable union of Borel rectangles. In order to do this, he introduces the following quasi-order. He finds minimal examples for this quasi-order for pairs of sets without this property.

\begin{defi} For $e\in 2$, let $X_e,Y_e$ be Polish spaces, and $A_e,B_e\subseteq X_e\times Y_e$. We say that $(X_0,Y_0,A_0,B_0)$ reduces to $(X_1,Y_1,A_1,B_1)$ if there are continuous functions $f:X_0\to X_1$ and $g:Y_0 \to Y_1$ such that
\[A_0\subseteq (f\times g)^{-1}(A_1)\]
and
\[B_0\subseteq (f\times g)^{-1}(B_1).\]
In this case, we write $(X_0,Y_0,A_0,B_0)\leq(X_1,Y_1,A_1,B_1)$.
\end{defi}

Later, D. Lecomte and M. Zeleny studied in \cite{LZ12} how to solve a different question using this same quasi-order. The problem is to characterize when an analytic set is separable from another by a countable union of sets of fixed Borel complexity. They also studied the problem of characterizing when an analytic digraph has a coloring of bounded Borel complexity. In particular, they proved the following conjecture (also proposed in \cite{LZ12}) when $\xi=1,2$.

\begin{con}\label{LZ2} (Lecomte, Zeleny) For each $0<\xi<\omega_1$ there are Polish spaces $\mathbb{X}_\xi,\mathbb{Y}_\xi$ and analytic subsets $\mathbb{A}_\xi,\mathbb{B}_\xi$ of $\mathbb{X}_\xi\times\mathbb{Y}_\xi$ such that for all Polish spaces $X,Y$ and analytic disjoint $A,B\subseteq X\times Y$, exactly one of the following holds:
\begin{enumerate}
	\item $B$ is separable from $A$ by a $(\mathbf\Sigma^0_\xi\times\mathbf\Sigma^0_{\xi})_\sigma$,
	\item $(\mathbb{X}_\xi,\mathbb{Y}_\xi, \mathbb{B}_\xi,\mathbb{A}_\xi)\leq (X,Y,B,A)$.
\end{enumerate}
\end{con}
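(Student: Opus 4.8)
\medskip
\noindent\textbf{A strategy for Conjecture~\ref{LZ2}.} The plan is to follow the template of Hurewicz-type dichotomies from \cite{KST99} and \cite{LSR87}, in the rectangular form developed in \cite{LZ12}. For the easy half, I would first observe that (1) and (2) are mutually exclusive once the examples are suitably chosen. The class $(\mathbf\Sigma^0_\xi\times\mathbf\Sigma^0_\xi)_\sigma$ is stable under preimages by maps $f\times g$ with $f,g$ continuous, since $(f\times g)^{-1}(S\times T)=f^{-1}(S)\times g^{-1}(T)$, the class $\mathbf\Sigma^0_\xi$ is stable under continuous preimages, and countable unions commute with preimages. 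So if $S$ separates $B$ from $A$ and $(\mathbb{X}_\xi,\mathbb{Y}_\xi,\mathbb{B}_\xi,\mathbb{A}_\xi)\leq(X,Y,B,A)$ via $(f,g)$, then $(f\times g)^{-1}(S)$ is a $(\mathbf\Sigma^0_\xi\times\mathbf\Sigma^0_\xi)_\sigma$ set separating $\mathbb{B}_\xi$ from $\mathbb{A}_\xi$. Hence it suffices to construct $\mathbb{X}_\xi,\mathbb{Y}_\xi,\mathbb{A}_\xi,\mathbb{B}_\xi$ so that $\mathbb{B}_\xi$ is \emph{not} separable from $\mathbb{A}_\xi$ by such a set; already this is delicate, because the example must encode the $\xi$-th Borel level in a rectangular fashion, and a uniform construction over all $\xi$ is exactly why the conjecture is only settled for $\xi=1,2$ in \cite{LZ12}.

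For the substantive half, assume (2) fails and produce a separator. After relativization one may take $X=Y=\baire$ with $A,B$ lightface $\Sigma^1_1$, and, by an effective argument in the spirit of \cite{LSR87}, if $B$ is separable from $A$ by a $(\mathbf\Sigma^0_\xi\times\mathbf\Sigma^0_\xi)_\sigma$ set then it is separable by a $\mathbf\Delta^1_1$ one; so we may assume no $\Delta^1_1$ $(\Sigma^0_\xi\times\Sigma^0_\xi)_\sigma$ separator exists. The crux is to set up the right topology on $\baire\times\baire$: refine it by combining the Gandy--Harrington topology with products of (codes of) effective $\Sigma^0_\xi$ sets, obtaining a ``rectangular level-$\xi$'' topology $\tau$, and form the canonical obstruction $N$, morally the part of the $\tau$-closure of $B$ that meets $A$ and survives every rectangular $\tau$-approximation. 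Non-separability forces $N\ne\emptyset$; a Baire-category argument in $\tau$, together with the unfolding of the $\Sigma^1_1$ sets into closed sets of a larger space, then produces, inside $A$, a point every rectangular $\tau$-neighbourhood of which meets $B$ in the pattern prescribed by $\mathbb{A}_\xi,\mathbb{B}_\xi$. Finally one runs a fusion / Cantor-scheme construction, building $f:\mathbb{X}_\xi\to\baire$ and $g:\mathbb{Y}_\xi\to\baire$ coordinate by coordinate along a tree of conditions, so that $f\times g$ sends the fixed copies of $\mathbb{A}_\xi$ and $\mathbb{B}_\xi$ into $A$ and $B$ respectively, giving (2) --- contradiction, so (1) must hold.

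The main obstacle is the genuinely two-dimensional bookkeeping. A $(\mathbf\Sigma^0_\xi\times\mathbf\Sigma^0_\xi)_\sigma$ separator is a \emph{countable} union of rectangles, so, unlike in the one-rectangle problem, the construction must simultaneously control both coordinates \emph{and} keep track of which ``slot'' of that union each point of $\mathbb{B}_\xi$ should fall into, coherently along a whole tree of conditions rather than a single branch. Designing $\tau$ to be fine enough to detect rectangular level-$\xi$ separation yet tame enough to support Baire category and the derivation of $N$, together with the transfinite definition and the non-separability proof of $\mathbb{X}_\xi,\mathbb{Y}_\xi,\mathbb{A}_\xi,\mathbb{B}_\xi$ for all $\xi$, is precisely where the difficulty lies; these ingredients are carried out for $\xi=1,2$ in \cite{LZ12}, and the general case remains open.
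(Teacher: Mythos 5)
The statement you are asked about is labelled a \emph{Conjecture} in the paper, and the paper contains no proof of it: it is due to Lecomte and Zeleny, it is only known for $\xi=1,2$ (proved in \cite{LZ12}), and the present paper merely \emph{uses} it as a hypothesis (see Lemma \ref{Lem1}) or invokes the $\xi=1,2$ cases via the cited constructions. So there is no ``paper proof'' to compare yours against, and your own text correctly acknowledges that the general case remains open.

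That said, what you have written is not a proof but a roadmap, and the gap is the entire substantive content. Your easy half is fine: $(\mathbf\Sigma^0_\xi\times\mathbf\Sigma^0_\xi)_\sigma$ is stable under preimages by $f\times g$, so a reduction transports a separator back to the examples, and mutual exclusivity follows \emph{provided} the examples are non-separable --- but you never construct $\mathbb{X}_\xi,\mathbb{Y}_\xi,\mathbb{A}_\xi,\mathbb{B}_\xi$ for general $\xi$, nor prove their non-separability, and this is exactly where the uniform-in-$\xi$ difficulty sits (in \cite{LZ12} the examples are built from ``very comparing'' $\xi$-disjoint families, and even there only $\xi=1,2$ is pushed through). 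Your hard half likewise names the expected ingredients (relativization, a Louveau-style effective topology --- the paper's $\tau_\xi$, for which Lemma \ref{LZ1} gives the precise criterion $A\cap\overline{B}^{\tau_\xi\times\tau_{\xi'}}=\emptyset$ --- the Gandy--Harrington topology, and a fusion/Cantor-scheme construction as in the proof of Theorem \ref{Thm1}) without carrying any of them out: no definition of the tree of conditions, no statement of the invariants the scheme must satisfy, no argument that the two-dimensional bookkeeping of countably many rectangular ``slots'' can be made coherent. Since these are precisely the open obstacles, the proposal cannot be credited as a proof of the statement; at best it is an accurate description of the known strategy and of why it has so far only succeeded for $\xi=1,2$.
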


We clarify some notation: given a class of sets $\Gamma$, as usual $\Gamma(X)=\{A\in\Gamma| A\subseteq X\}$. Also $\Gamma\times\Gamma'=\{A\times B| A\in \Gamma, B\in\Gamma'\}$, and finally $\Gamma_\sigma=\{\cup_{n\in\omega} A_n|A_n\in\Gamma\}$. We also use $\Pi_0$ and $\Pi_1$ for the projection on the first and second coordinate respectively.

How can we characterize the separability by a Borel rectangle? This is probably folklore, but we will provide a proof later on.

\begin{Pro} \label{Borel} Let $X,Y$ be Polish spaces, and let $A,B$ be $\mathbf\Sigma^1_1$ subsets of $X\times Y$. The following are equivalent:
\begin{enumerate}
	\item $A$ is not separable from $B$ by a $\mathbf\Delta^1_1\times\mathbf\Delta^1_1$ set, 
	\item $(\Pi_0[A]\times\Pi_1[A])\cap B\neq\emptyset$, so in particular $A$ is not separable from $B$ by an arbitrary rectangle,
	\item $\big(2,2,\{(0,0),(1,1)\},\{(0,1)\}\big)\leq (X,Y,A,B)$.
\end{enumerate}
\end{Pro}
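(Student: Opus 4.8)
The plan is to prove the implications (2)$\Rightarrow$(3)$\Rightarrow$(2) by directly unwinding the reduction relation, and to establish (1)$\Leftrightarrow$(2) separately; the ``in particular'' clause of (2) (non-separability by an arbitrary rectangle) will fall out of the argument for free. I would organize the write-up around the cycle (1)$\Rightarrow$(2)$\Rightarrow$(3)$\Rightarrow$(2)$\Rightarrow$(1), noting that only one implication is substantive.

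The equivalence of (2) and (3) is pure bookkeeping. Since the domains $2$ are discrete, \emph{any} maps $f:2\to X$ and $g:2\to Y$ are continuous, so a reduction $(2,2,\{(0,0),(1,1)\},\{(0,1)\})\leq(X,Y,A,B)$ amounts to a choice of points $x_0=f(0),x_1=f(1)\in X$ and $y_0=g(0),y_1=g(1)\in Y$ with $(x_0,y_0)\in A$, $(x_1,y_1)\in A$ and $(x_0,y_1)\in B$. Given such a reduction, $x_0\in\Pi_0[A]$ and $y_1\in\Pi_1[A]$, so $(x_0,y_1)\in(\Pi_0[A]\times\Pi_1[A])\cap B$, which is (2). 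Conversely, given $(x,y)\in(\Pi_0[A]\times\Pi_1[A])\cap B$, choose $y_0$ with $(x,y_0)\in A$ and $x_1$ with $(x_1,y)\in A$, and put $x_0:=x$, $y_1:=y$; this is the desired reduction. The implication (2)$\Rightarrow$(1) is equally immediate, and in fact yields the stronger conclusion that $A$ is not separable from $B$ by any rectangle: if $A\subseteq C\times D$, then $\Pi_0[A]\subseteq C$ and $\Pi_1[A]\subseteq D$, so $(\Pi_0[A]\times\Pi_1[A])\cap B\subseteq(C\times D)\cap B$, and by (2) the right-hand side is nonempty, so $C\times D$ cannot be disjoint from $B$.

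The one real step is (1)$\Rightarrow$(2), which I would prove by contraposition using Lusin's separation theorem twice. Assume $(\Pi_0[A]\times\Pi_1[A])\cap B=\emptyset$. The sets $\Pi_0[A]$ and $\Pi_0[B\cap(X\times\Pi_1[A])]$ are $\mathbf\Sigma^1_1$ and disjoint --- this disjointness is exactly the hypothesis --- so Lusin's theorem yields a Borel set $C$ with $\Pi_0[A]\subseteq C$ and $C\cap\Pi_0[B\cap(X\times\Pi_1[A])]=\emptyset$. Now $\Pi_1[A]$ and $\Pi_1[B\cap(C\times Y)]$ are again $\mathbf\Sigma^1_1$ (here we use that $C$ is Borel) and disjoint: a common point $y$ would come with some $x\in C$ satisfying $(x,y)\in B$ and $y\in\Pi_1[A]$, contradicting the choice of $C$. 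A second application of Lusin's theorem gives a Borel set $D$ with $\Pi_1[A]\subseteq D$ and $D\cap\Pi_1[B\cap(C\times Y)]=\emptyset$. Then $A\subseteq\Pi_0[A]\times\Pi_1[A]\subseteq C\times D$, while $(C\times D)\cap B=\emptyset$, since any $(x,y)\in B$ with $x\in C$ has $y\in\Pi_1[B\cap(C\times Y)]$, hence $y\notin D$. As $\mathbf\Delta^1_1$ coincides with Borel in Polish spaces, $C\times D$ is a $\mathbf\Delta^1_1\times\mathbf\Delta^1_1$ set separating $A$ from $B$, contradicting (1).

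The only point that requires attention is the bookkeeping in the iterated separation: one must apply Lusin's theorem to the first coordinate \emph{before} the second, so that the Borel set $C$ obtained in the first step is available to make the relevant $\mathbf\Sigma^1_1$ set in the second step (and to keep the two projections disjoint). There is no deeper obstacle; everything else is a direct unwinding of definitions.
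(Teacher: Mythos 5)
Your proposal is correct and follows essentially the same route as the paper: the substantive implication (1)$\Rightarrow$(2) is proved by the identical double application of Lusin's separation theorem (first in the $X$-coordinate, then in the $Y$-coordinate using the Borel set just obtained), and the (2)$\Leftrightarrow$(3) bookkeeping matches the paper's construction of the two-point reduction. The only difference is cosmetic: you close the cycle via (3)$\Rightarrow$(2)$\Rightarrow$(1) rather than the paper's direct (3)$\Rightarrow$(1), and you make explicit the ``in particular'' clause about arbitrary rectangles, which the paper leaves implicit.
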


One would like to refine this result as the Louveau-Saint Raymond Theorem is a refinement of the Lusin Theorem. We want a test characterizing the separabilty of analytic sets by a rectangle with sides of bounded Borel complexity. In Section \ref{sec1} we provide some first results in this direction. In particular, we  characterize when a Borel rectangle is in fact a $\mathbf\Sigma^0_\xi$ rectangle. For the reminder, we  use effective descriptive set theory. In Section \ref{sec2} we use some effective topologies to characterize separation of $\Sigma^1_1$ sets by $\Gamma\times\Gamma'$ sets for several  pairs $(\Gamma, \Gamma')$. 

The rest of the article will be dedicated to the proof of the main theorem, which will be done individually for each pair. 

\begin{thm}\label{Main} Let $\Gamma,\Gamma'\in\{\open,\close,\gdelta\}$ or $\Gamma\times\Gamma'=\close\times\fsigma$. There is a finite antichain $\mathcal{C}$ of the class of quadruples of the form $(Z,W,C,D)$, where $Z,W$ are Polish spaces, and $C,D\subseteq Z\times W$ are disjoint analytic subsets, such that, for any Polish spaces $X,Y$ and disjoint analytic subsets $A,B$ of $X,Y$, exactly one of the following holds:
\begin{enumerate}
	\item $A$ is separable from $B$ by a $\Gamma\times\Gamma'$ set,
	\item there is $(\bbX,\bbY,\bbA,\bbB)\in\mathcal{C}$ such that $(\bbX,\bbY,\bbA,\bbB)\leq(X,Y,A,B)$.
\end{enumerate}
\end{thm}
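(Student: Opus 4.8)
The plan is to prove Theorem \ref{Main} by a dichotomy argument that separates into cases according to the pair $(\Gamma,\Gamma')$, but in all cases following the same template, which is by now standard for Hurewicz-type tests. First I would reduce to the effective setting: assuming $X$, $Y$ are recursively presented Polish spaces and $A$, $B$ are $\Sigma^1_1$ (lightface), and using the results of Section \ref{sec2} which characterize separation of $\Sigma^1_1$ sets by $\Gamma\times\Gamma'$ in terms of some finer effective topology $\tau$ on $X\times Y$ (of the form $\tau_\Gamma\times\tau_{\Gamma'}$). The key dividing line will be: either $A$ \emph{is} separable from $B$ by such a $\Gamma\times\Gamma'$ set — which gives alternative (1) — or the ``$\tau$-closure'' of $A$ meets $B$, equivalently $\overline{A}^{\tau_\Gamma\times\tau_{\Gamma'}}\cap B\neq\emptyset$, which is the failure of separation and is what we must exploit to build a reduction.

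The heart of the argument is then, assuming non-separation, to produce one of the finitely many minimal quadruples $(\bbX,\bbY,\bbA,\bbB)\in\mathcal C$ together with continuous $f:\bbX\to X$ and $g:\bbY\to Y$ witnessing $(\bbX,\bbY,\bbA,\bbB)\leq(X,Y,A,B)$. I would do this by the usual Cantor-scheme / fusion construction: fix a point $(x_\infty,y_\infty)\in\overline{A}^{\tau}\cap B$, and recursively build finite approximations $(x_s,y_t)$ for $s,t$ ranging over the relevant trees, keeping the ``$A$-side'' points $\tau$-converging to the positions where $\bbA$ sits and the ``$B$-side'' data converging to $(x_\infty,y_\infty)$, using at each step the defining property of the $\tau$-closure (every $\tau$-open neighbourhood of $x_\infty$ meets $A$) plus the Gandy--Harrington topology on $X\times Y$ to keep all the pieces inside $\Sigma^1_1$ sets with nonempty interior. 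The combinatorial shape of $\mathcal C$ — how many quadruples, and what $(\bbX,\bbY,\bbA,\bbB)$ looks like — depends on $(\Gamma,\Gamma')$: for $(\open,\open)$ Proposition \ref{Borel} already essentially gives the single example $(2,2,\{(0,0),(1,1)\},\{(0,1)\})$ up to where the topologies coincide, while for pairs involving $\gdelta$ or for $\close\times\fsigma$ one expects Hurewicz-style examples on $\baire$ or $\cantor$ encoding the relevant $\mathbf\Pi^0_2$ / $\mathbf\Sigma^0_2$ behaviour, and one must also verify that the chosen family is genuinely an antichain (no quadruple reduces to another) and that each member fails (1), so that the two alternatives are mutually exclusive.

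The main obstacle I expect is the construction in the non-separation case for the ``mixed'' and higher pairs — concretely $\Gamma\times\Gamma'$ involving $\gdelta$, and the asymmetric case $\close\times\fsigma$ — because there the target topology is not first-countable in a uniform way and the Cantor scheme has to simultaneously control a rank/level parameter (to capture the $\mathbf\Sigma^0_2$-ness, i.e. a countable union of closed conditions) on one coordinate while doing a plain Hurewicz diagonalization on the other. Getting the bookkeeping right so that the limit maps $f,g$ are continuous and send $\bbA$ into $A$ and $\bbB$ into $B$, and so that finitely many examples suffice (rather than an $\omega$-sequence), is the delicate point; this is presumably why the paper says the proof ``will be done individually for each pair.'' The remaining ingredients — transferring the lightface dichotomy to the boldface statement by relativizing to a parameter, and checking exclusivity of (1) and (2) — are routine once the effective version and the examples are in hand.
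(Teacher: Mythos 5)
There is a genuine gap at the very first step of your plan: the dividing line you propose, namely $\overline{A}^{\tau_\Gamma\times\tau_{\Gamma'}}\cap B\neq\emptyset$, is the criterion for non-separability by a \emph{countable union} of rectangles, i.e. by a $(\mathbf\Sigma^0_\xi\times\mathbf\Sigma^0_{\xi'})_\sigma$ set (Lemma \ref{LZ1}); it does not characterize separability by a single $\Gamma\times\Gamma'$ rectangle, which is what Theorem \ref{Main} is about. The correct criteria are of a different shape and depend on the pair: for $\mathbf\Pi^0_\xi\times\mathbf\Pi^0_{\xi'}$ one needs $B\cap\overline{\Pi_0[A]\times\Pi_1[A]}^{\tau_\xi\times\tau_{\xi'}}=\emptyset$ (Lemma \ref{lem1}, closure of the \emph{product of projections}, not of $A$); for $\open\times\open$ (and more generally $\mathbf\Sigma^0_\xi\times\mathbf\Sigma^0_{\xi'}$) one gets the asymmetric two-quantifier neighborhood condition of Lemma \ref{L2}; and for the mixed cases one gets conditions of the form $(\overline{\Pi_0[A]}^{\tau_\xi}\times V)\cap B=\emptyset$ (Lemma \ref{L3}). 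Running your fusion against the wrong criterion would not produce the right examples, and it leads you to a concrete false prediction: you assert that for $\open\times\open$ Proposition \ref{Borel} essentially gives the single example $(2,2,\{(0,0),(1,1)\},\{(0,1)\})$, but that example is minimal only for separation by an unrestricted (Borel) rectangle. For open rectangles the antichain basis has \emph{two} $\leq$-incomparable elements $(\bbX,\bbX,\bbA,\bbB_0)$ and $(\bbX,\bbX,\bbA,\bbB_1)$ (Theorem \ref{Op} and the incomparability proposition following it), a direct consequence of the two symmetric alternatives in Lemma \ref{L2}; your symmetric closure condition erases exactly this asymmetry.

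A second missing idea concerns the cases involving $\gdelta$. The paper does not run a direct Cantor--Gandy--Harrington fusion against a rectangle criterion there: for $\close\times\close$ and $\gdelta\times\gdelta$ it \emph{derives} the dichotomy from the Lecomte--Zeleny theorem for $(\mathbf\Sigma^0_\xi\times\mathbf\Sigma^0_\xi)_\sigma$ via Lemma \ref{lem1}, and this only yields containment of the projections of $\bbA_\xi$, not a genuine reduction; the extra step is to choose examples whose $\bbA_\xi$ has countable projections and to pass to $\bbX_\xi\oplus\omega$, $\bbY_\xi\oplus\omega$ (Theorem \ref{Thm2}) to upgrade projection-containment into $\leq$. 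Similarly $\close\times\gdelta$ requires first proving a new $(\open\times\fsigma)_\sigma$ dichotomy (Theorem \ref{Thm1}, where the Gandy--Harrington fusion genuinely appears) and then performing the same upgrade, while $\gdelta\times\open$ and $\close\times\fsigma$ combine Lemma \ref{L3} with the fusion Lemma \ref{L4}. So your fusion template matches only part of the paper's work, and without the correct single-rectangle criteria, the two-element antichain in the open case, and the countable-projection/$\oplus\omega$ trick, the proposal does not close the argument.
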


This antichain $\mathcal{C}$ is usually referred to as an antichain basis. In general, we will describe a general process to obtain this antichain.

In Section \ref{sec3}, we will obtain $\mathcal{C}$ for open rectangles. We note that usually, the antichain for separation of two disjoint analytic sets does not differ a lot from the special case when these two analytic are complements of each other. This is not the case here, the antichain not only being quite different, but having two elements instead of one. 

In the rest of the sections, we will offer $\mathcal{C}$ for the rest of the combinations for $\Gamma,\Gamma'$. 

\section{First results}\label{sec1}

The results here are all consequences of classical results.

\begin{proof}[Proof of Proposition \ref{Borel}]  $(3\Rightarrow 1).$ Suppose that $A$ is separable from $B$ by $C\times D$. Then, $f^{-1}(C)\times g^{-1}(D)$ must also separate $\{(0,0),(1,1)\}$ from \{(0,1)\}. This is clearly absurd.

$(1\Rightarrow 2).$ We argue by contradiction.  In this case,  $\Pi_0[A]$ is disjoint from $\{x\in X| \exists y\in\Pi_1[A] \big((x,y)\in B\big)\}$. Since these sets are analytic, Lusin's Theorem gives a Borel set $C$ which separates $\Pi_0[A]$ from the other set. 

Note that $\big(C\times\Pi_1[A]\big)\cap B=\emptyset$. Therefore,  $\Pi_1[A]$ is disjoint from $\{y\in Y| \exists x\in C \big((x,y)\in B\big)\}$. Again, Lusin's theorem gives a Borel set $D$ which separates them. Thus, $C\times D$ separates $A$ from $B$.

$(2\Rightarrow 3).$ Let $(x,y)\in(\Pi_0[A]\times\Pi_1[A])\cap B$. There is $y'\in Y$ (respectively $x'\in X$) which witness the fact that $x\in \Pi_0[A]$ (resp. $y\in \Pi_1[A]$). Define
\[\begin{array}{rc}
	f(e)&=\left\{\begin{array}{lc}
							x & \mbox{if } e=0,\\
							x'& \mbox{if } e=1,
							\end{array}	
				\right.\\
	g(e)&=\left\{\begin{array}{lc}
							y & \mbox{if } e=1,\\
							y'& \mbox{if } e=0.
							\end{array}	
				\right.
\end{array}\] 

Finally, $f\times g$ is clearly  the required reduction.
\end{proof}

Note that Lusin's Theorem is also a consequence of Proposition \ref{Borel}. Indeed, if two analytic sets $C,D$ are disjoint, then $C\times D$ is disjoint from the diagonal. Proposition \ref{Borel} gives a Borel set separating $C$ from $D$.

Now, we would like to answer the following question: can we find a Hurewicz-like test to decide if a particular Borel set in the plane is a rectangle of a particular complexity? As we will see, this is in fact just an easy application of the previously mentioned result by A. Louveau and J. Saint Raymond in \cite{LSR87}.  For each $0<\xi<\omega_1$, let $S_\xi\in \mathbf\Pi^0_\xi(\cantor)\backslash\mathbf\Sigma^0_\xi(\cantor)$. Define

\[\bbA_\xi=\{(0\alpha,1^\infty)|\alpha\in S_\xi\}\cup\{(1^\infty,0\alpha)|\alpha\in S_\xi\},\]
\[\bbB_\xi=\{(0\alpha,0\alpha)|\alpha\notin S_\xi\}.\]

\begin{Pro}\label{ProBR} Let $0<\xi<\omega_1$, $X,Y$ be Polish spaces, and $A$ be a Borel subset of $X\times Y$. Exactly one of the following must hold:
\begin{enumerate}
	\item $A$ is a $\mathbf\Sigma^0_\xi$ rectangle,
	\item $(\cantor,\cantor,\bbA_\xi,\bbB_\xi)\leq(X,Y,A,\neg A).$
\end{enumerate}
\end{Pro}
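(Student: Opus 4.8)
The plan is to establish separately that (1) and (2) cannot both hold and that at least one of them always holds. \emph{For incompatibility}: suppose $A=C\times D$ with $C\in\mathbf\Sigma^0_\xi(X)$ and $D\in\mathbf\Sigma^0_\xi(Y)$, and that continuous $f:\cantor\to X$ and $g:\cantor\to Y$ witness (2). Setting $C'=f^{-1}(C)$ and $D'=g^{-1}(D)$, one has $C',D'\in\mathbf\Sigma^0_\xi(\cantor)$ and $(f\times g)^{-1}(A)=C'\times D'$. For $\alpha\in S_\xi$ both $(0\alpha,1^\infty)$ and $(1^\infty,0\alpha)$ lie in $\bbA_\xi\subseteq C'\times D'$, forcing $0\alpha\in C'\cap D'$; for $\alpha\notin S_\xi$ the point $(0\alpha,0\alpha)$ lies in $\bbB_\xi$, hence outside $C'\times D'$, so $0\alpha\notin C'\cap D'$. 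Hence $(C'\cap D')\cap\{0\beta:\beta\in\cantor\}=\{0\alpha:\alpha\in S_\xi\}$, and since $\mathbf\Sigma^0_\xi$ is stable under finite intersection, under intersection with clopen sets and under continuous preimages, this would place $S_\xi$ in $\mathbf\Sigma^0_\xi(\cantor)$, against its choice.

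For the other direction I would assume $A$ is not a $\mathbf\Sigma^0_\xi$ rectangle and build $f,g$ realizing (2). Since $N_0:=\{0\beta:\beta\in\cantor\}$ is clopen, avoids $1^\infty$, and contains every coordinate occurring in $\bbA_\xi\cup\bbB_\xi$ apart from $1^\infty$, it suffices to define $f,g$ continuously on $N_0$, fix $f(1^\infty)$ and $g(1^\infty)$, and extend $f,g$ by these constants off $N_0$. I would split into two cases. If $A\neq\Pi_0[A]\times\Pi_1[A]$, pick $(a,b)\in(\Pi_0[A]\times\Pi_1[A])\setminus A$ and $a',b'$ with $(a,b')\in A$ and $(a',b)\in A$ (as in the proof of $(2\Rightarrow3)$ of Proposition \ref{Borel}); taking $f$ equal to $a$ on $N_0$ with $f(1^\infty)=a'$, and $g$ equal to $b$ on $N_0$ with $g(1^\infty)=b'$, one checks on the three point types $(0\alpha,1^\infty)$, $(1^\infty,0\alpha)$, $(0\alpha,0\alpha)$ that this is the required reduction. (Equivalently, this is Proposition \ref{Borel} composed with the evident reduction of $(\cantor,\cantor,\bbA_\xi,\bbB_\xi)$ to $\big(2,2,\{(0,0),(1,1)\},\{(0,1)\}\big)$.)

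In the remaining case $A=\Pi_0[A]\times\Pi_1[A]$ one has $A\neq\emptyset$ (else $A=\emptyset\times\emptyset$ is a $\mathbf\Sigma^0_\xi$ rectangle), so $C:=\Pi_0[A]$ and $D:=\Pi_1[A]$ are nonempty and Borel, being sections of the Borel set $A$. As $A$ is not a $\mathbf\Sigma^0_\xi$ rectangle, $C$ or $D$ fails to be $\mathbf\Sigma^0_\xi$, and by the symmetry of $(\bbA_\xi,\bbB_\xi)$ under exchange of the two coordinates I may assume $D\notin\mathbf\Sigma^0_\xi(Y)$. Since every member of $\mathbf\Pi^0_\xi\setminus\mathbf\Sigma^0_\xi$ is $\mathbf\Pi^0_\xi$-complete (Wadge), the Louveau--Saint-Raymond theorem applied to the disjoint Borel pair $(D,\neg D)$ — which here is merely a Wadge reduction — yields a continuous $G:\cantor\to Y$ with $G^{-1}(D)=S_\xi$. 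Fixing $c\in C$ and $d\in D$, put $f\equiv c$ and $g(0\beta)=G(\beta)$ on $N_0$ with $g(1^\infty)=d$; then $G(\alpha)\in D\Leftrightarrow\alpha\in S_\xi$, and checking the same three point types shows $(f,g)$ reduces $(\cantor,\cantor,\bbA_\xi,\bbB_\xi)$ to $(X,Y,A,\neg A)$. The only non-elementary step here is this last invocation of Louveau--Saint-Raymond (for a Borel pair, just the completeness of sets in $\mathbf\Pi^0_\xi\setminus\mathbf\Sigma^0_\xi$); the point to watch in the incompatibility argument is that the two ``arms'' of $\bbA_\xi$ push $0\alpha$ into $C'$ \emph{and} into $D'$, so that it is the intersection $C'\cap D'$ — still of class $\mathbf\Sigma^0_\xi$ — that recovers $S_\xi$, while in the positive direction one only has to disentangle ``$A$ is a $\mathbf\Sigma^0_\xi$ rectangle'' into ``$A$ is a rectangle'' (Proposition \ref{Borel}) and ``its sides are $\mathbf\Sigma^0_\xi$'' (Louveau--Saint-Raymond).
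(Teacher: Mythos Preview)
Your argument is correct and follows essentially the same route as the paper: the same case split (``$A$ not a rectangle'' versus ``$A$ a rectangle with a side of the wrong class''), the same constant-witness reduction in the first case, and the same appeal to Louveau--Saint-Raymond in the second; your incompatibility argument via $C'\cap D'$ spells out in detail what the paper compresses into one line. One small caveat: the parenthetical that this is ``merely a Wadge reduction'' is a bit loose, since $Y$ need not be zero-dimensional and $D$ need not lie in $\mathbf\Pi^0_\xi$ --- the clean justification in this generality is precisely Louveau--Saint-Raymond, which you do invoke.
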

\begin{proof}
 The exactly part comes from the fact that $\bbA_\xi$ cannot be separated from $\bbB_\xi$ by a $\mathbf\Sigma^0_\xi$ rectangle; which in turn is because $S_\xi$ is not separable from $\neg S_\xi$ by a $\mathbf\Sigma^0_\xi$ set. 

So suppose that $A$ is not a $\mathbf\Sigma^0_\xi$ rectangle. There are two cases.

First case: $A$ is not a rectangle. In this particular case, $(\Pi_0[A]\times\Pi_1[A])\cap \neg A\neq \emptyset$. So take $(x,y)$ in this intersection. Pick $x'\in X, y'\in Y'$ such that $(x,y'),(x',y)\in A$. The following functions define the required reduction $f\times g$:

\[\begin{array}{rc}
	f(\alpha)&=\left\{\begin{array}{lc}
							x & \mbox{if } \alpha(0)=0,\\
							x'& \mbox{if } \alpha(0)=1,
							\end{array}	
				\right.\\
	g(\alpha)&=\left\{\begin{array}{lc}
							y & \mbox{if } \alpha(0)=0,\\
							y'& \mbox{if } \alpha(1)=1.
							\end{array}	
				\right.
\end{array}\]

Second case: $A=\Acero\times\Auno$. Then, at least one of the sides must not be a $\mathbf\Sigma^0_\xi$ set. Without loss of generality, suppose that it is $\Acero$. This gives a continuous function $\hat{f}:\cantor\to X$ such that
\[S_\xi\subseteq\hat{f}^{-1}(\Acero)\]
\[\neg S_\xi \subseteq\hat{f}^{-1}(\neg\Acero)\]

Choose $(x,y)\in A$. Note that, $\big(\hat{f}(\alpha),y\big)\in A$ for all $\alpha\in S$.
Define $f:\cantor\to X$ and $g:\cantor\to Y$ by:

\[f(e\alpha)=\left\{\begin{array}{lc}
							\hat{f}(\alpha) & \mbox{if } e=0,\\
							x & \mbox{if } e=1,
							\end{array}	
				\right.\]
\[g(\alpha)=y. \]

So in both cases we obtain a reduction.
\end{proof}

\section{Topological characterizations}\label{sec2}

We will use effective methods to solve our problems of classical type. We remember that given a recursively presented Polish space $X$ (see \cite{M1980} for basic definitions), there are certain topologies on $X$ whose closure operation captures the separability of $\Sigma^1_1$ subsets by subsets in a particular class of the Borel hierarchy.  These topologies, as well as the following statement, were introduced by Louveau in \cite{LO1980}. In the sequel, everything can be relativized to an element of $\baire$. Let $\tau_1(X)$ denote the original topology on $X$. If $1<\xi<\omega_1$, then $\tau_{\xi}(X)$ is the topology generated by all $\Sigma^1_1\cap\mathbf\Pi^0_{<\xi}$ subsets of $X$, where $\mathbf\Pi^0_{<\xi}:=\cup_{\nu<\xi}\mathbf\Pi^0_\nu$.  We can then state our previous remark formally.

\begin{thm}\label{LO1} (Louveau) Let $0<\xi<\omega^{CK}_1$, $X$ be a recursively presented space, and $A,B$ be disjoint $\Sigma^1_1$ subsets of $X$. The following are equivalent:
\begin{enumerate}
	\item $A$ is separable from $B$  by a $\mathbf{\Sigma}^0_\xi$ subset,
	\item $A\cap \overline{B}^{\tau_\xi}=\emptyset$.
\end{enumerate} 
\end{thm}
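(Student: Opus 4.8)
The plan is to prove both implications separately, with the easy direction being $(1)\Rightarrow(2)$ and the substantive one being $(2)\Rightarrow(1)$. For $(1)\Rightarrow(2)$, suppose $C$ is a $\mathbf{\Sigma}^0_\xi$ set with $A\subseteq C$ and $C\cap B=\emptyset$. The key observation is that $\mathbf{\Sigma}^0_\xi$ sets are open in the topology $\tau_\xi$: any $\mathbf{\Sigma}^0_\xi$ set is a countable union of sets in $\mathbf{\Pi}^0_{<\xi}$, and each such set is (after intersecting appropriately) a basic $\tau_\xi$-open set, since $\tau_\xi$ is generated by the $\Sigma^1_1\cap\mathbf{\Pi}^0_{<\xi}$ sets — here one uses that one may as well throw in all $\mathbf{\Pi}^0_{<\xi}$ sets, or invoke the refinement that it suffices to check closure against a generating family. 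Then $\neg C$ is $\tau_\xi$-closed and contains $B$, so $\overline{B}^{\tau_\xi}\subseteq\neg C\subseteq\neg A$, giving $A\cap\overline{B}^{\tau_\xi}=\emptyset$.

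For $(2)\Rightarrow(1)$, the argument is the real content and rests on two ingredients: first, that $(X,\tau_\xi)$ is still a recursively presented (Polish-like) space with a good effective structure, and second, an effective unfolding/separation argument. Concretely, I would show that the $\tau_\xi$-closure of a $\Sigma^1_1$ set $B$ is itself $\Sigma^1_1$ in the original structure (this is where Louveau's analysis of the topologies $\tau_\xi$, and the fact that $\tau_\xi$ has a $\Sigma^1_1$-recursive basis, enters), and that it is moreover $\mathbf{\Pi}^0_\xi$ — indeed $\Pi^0_\xi$ effectively — because a $\tau_\xi$-closed set is a complement of a $\tau_\xi$-open set and $\tau_\xi$-open sets are $\mathbf{\Sigma}^0_\xi$. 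Given $A\cap\overline{B}^{\tau_\xi}=\emptyset$, the set $\overline{B}^{\tau_\xi}$ is then a $\mathbf{\Sigma}^0_\xi$ set (it is $\mathbf{\Pi}^0_\xi$; one should be careful — actually one wants its complement, which is $\tau_\xi$-open hence $\mathbf{\Sigma}^0_\xi$, to separate $A$ from $B$) containing $A$ and disjoint from $B$, so it is the desired separator. Slightly more carefully: $\neg\overline{B}^{\tau_\xi}$ is $\tau_\xi$-open, hence $\mathbf{\Sigma}^0_\xi$, it contains $A$ (by the hypothesis) and is disjoint from $B$ (since $B\subseteq\overline{B}^{\tau_\xi}$), which is exactly a $\mathbf{\Sigma}^0_\xi$ separation.

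The main obstacle is establishing that $\tau_\xi$-open sets are exactly (boldface) $\mathbf{\Sigma}^0_\xi$, and in particular that the $\tau_\xi$-closure of a $\Sigma^1_1$ set stays $\Sigma^1_1$; this is not formal and is the heart of Louveau's theorem. One needs the fact that $\tau_\xi$ admits a basis consisting of $\Sigma^1_1\cap\mathbf{\Pi}^0_{<\xi}$ sets that is itself effectively enumerable (a $\Sigma^1_1$-recursive basis), so that a $\tau_\xi$-open set is a recursive-in-$\Sigma^1_1$ union of $\mathbf{\Pi}^0_{<\xi}$ sets, hence $\mathbf{\Sigma}^0_\xi$; and conversely that one can build, inside the $\tau_\xi$ structure, a $\tau_\xi$-open separator whenever the closures miss $A$. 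Since the problem statement permits me to \emph{assume} this theorem is due to Louveau and cite \cite{LO1980}, in the write-up I would either reproduce Louveau's proof of these structural facts about $\tau_\xi$ or simply cite them and assemble the two implications as above; the cleanest exposition is to isolate the lemma ``$\tau_\xi$-open $\Leftrightarrow$ $\mathbf{\Sigma}^0_\xi$, and $\tau_\xi$-closures of $\Sigma^1_1$ sets are $\Sigma^1_1$'' and then deduce the equivalence in three lines.
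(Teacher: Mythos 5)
The paper does not prove this statement at all: it is quoted verbatim from Louveau \cite{LO1980} as a known theorem (and its ingredients are used later as black boxes), so there is no internal proof to compare with; your proposal must therefore stand on its own, and it contains a genuine gap. You have located the difficulty in the wrong direction. Your argument for $(1)\Rightarrow(2)$ rests on the claim that every boldface $\mathbf{\Sigma}^0_\xi$ set is $\tau_\xi$-open, justified by saying that ``one may as well throw in all $\mathbf{\Pi}^0_{<\xi}$ sets'' into the generating family. This is false: $\tau_\xi$ is generated only by the \emph{lightface-analytic} sets $\Sigma^1_1\cap\mathbf{\Pi}^0_{<\xi}$, a countable family, so $\tau_\xi$ is second countable with only countably many basic open sets; adding all boldface $\mathbf{\Pi}^0_{<\xi}$ sets would produce a strictly finer topology (already for $\xi=2$: every singleton is closed, hence $\mathbf{\Sigma}^0_2$, but only countably many singletons can be $\tau_2$-open), and with that finer topology $\overline{B}^{\tau_\xi}$ shrinks and the equivalence with $\mathbf{\Sigma}^0_\xi$-separability is destroyed. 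Consequently an arbitrary boldface separator $C$ need not have $\tau_\xi$-closed complement, and your three-line deduction of $A\cap\overline{B}^{\tau_\xi}=\emptyset$ does not go through. The correct proof of $(1)\Rightarrow(2)$ is exactly where Louveau's effective separation theorem enters (Theorem B of \cite{LO1980}, the result the paper invokes repeatedly): from boldface separability of the disjoint $\Sigma^1_1$ sets $A,B$ one first obtains a separator with $\Delta^1_1$ codes, i.e.\ a countable union of $\Delta^1_1\cap\mathbf{\Pi}^0_{<\xi}$ sets; \emph{that} set is $\tau_\xi$-open, and only then does your complement argument apply. Your intended ``key lemma'' ($\tau_\xi$-open $\Leftrightarrow\mathbf{\Sigma}^0_\xi$) is false in the right-to-left direction, so citing it cannot repair the proof.

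Conversely, the direction you present as the real content, $(2)\Rightarrow(1)$, is the easy one and needs none of the machinery you invoke: since $\tau_\xi$ has a countable basis of sets in $\Sigma^1_1\cap\mathbf{\Pi}^0_{<\xi}$ (finite intersections of generators stay in this class), every $\tau_\xi$-open set is a countable union of $\mathbf{\Pi}^0_{<\xi}$ sets and hence boldface $\mathbf{\Sigma}^0_\xi$; thus $\neg\overline{B}^{\tau_\xi}$ is a $\mathbf{\Sigma}^0_\xi$ set containing $A$ and disjoint from $B$. In particular, the fact that $\overline{B}^{\tau_\xi}$ is again $\Sigma^1_1$ (which the paper does use, but for Lemma \ref{lem1}, not here) plays no role in this implication. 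So the proposal should be reorganized: the cheap argument proves $(2)\Rightarrow(1)$, and $(1)\Rightarrow(2)$ must be routed through the effective refinement of the separator, not through a purported identification of $\tau_\xi$-open sets with all boldface $\mathbf{\Sigma}^0_\xi$ sets.
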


We also note that as proved in \cite{LO1980}, if $B$ is $\Sigma^1_1$, then $\overline{B}^{\tau_\xi}$ is also $\Sigma^1_1$.

This was first extended to the case of dimension two by D. Lecomte using a similar set of topologies. Then D. Lecomte and M. Zeleny found in \cite{LZ12} a similar statement for countable unions of $\mathbf{\Sigma}^0_\xi$ rectangles. We give a further generalized statement, which follows with the exact same proof as the original one.

\begin{lem}\label{LZ1} (Lecomte, Zeleny) Let $0<\xi,\xi'<\omega^{CK}_1$, $X,Y$ be recursively presented Polish spaces, and $A$,$B$ be disjoint $\Sigma^1_1$ subsets of $X\times Y$. The following are equivalent:
\begin{enumerate}
	\item $A$ is separable from $B$  by a $(\mathbf\Sigma^0_\xi\times \mathbf\Sigma^0_{\xi'})_\sigma$ set,
	\item $A\cap \overline{B}^{\tau_\xi\times\tau_{\xi'}}=\emptyset$.
\end{enumerate}  
\end{lem}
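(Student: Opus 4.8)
The strategy is to mimic the one-dimensional argument of Louveau (Theorem \ref{LO1}) and the countable-union version already proved by Lecomte and Zeleny, adapting it to two different exponents $\xi \neq \xi'$ on the two coordinates. First I would set up the product topology: let $\tau = \tau_\xi(X)\times\tau_{\xi'}(Y)$ on $X\times Y$, so that a basic $\tau$-open set is of the form $U\times V$ where $U\in\Sigma^1_1\cap\mathbf\Pi^0_{<\xi}(X)$ and $V\in\Sigma^1_1\cap\mathbf\Pi^0_{<\xi'}(Y)$. The key preliminary observation, which I would isolate as the analogue of the remark after Theorem \ref{LO1}, is that if $B$ is $\Sigma^1_1$ then $\overline{B}^{\tau}$ is again $\Sigma^1_1$; this follows from the one-dimensional fact (proved in \cite{LO1980}) together with an application of Kleene's recursion theorem / the boundedness machinery exactly as in the original proof, and I would simply cite that the proof carries over verbatim.

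The easy direction is $(1)\Rightarrow(2)$, or rather its contrapositive $\neg(2)\Rightarrow\neg(1)$ combined with the converse. If $A$ is separable from $B$ by a set $S=\bigcup_{n}(C_n\times D_n)$ with $C_n\in\mathbf\Sigma^0_\xi(X)$, $D_n\in\mathbf\Sigma^0_{\xi'}(Y)$, then in fact $S$ is $\tau$-open: each $C_n$ is $\tau_\xi$-open (a $\mathbf\Sigma^0_\xi$ set is a countable union of $\Sigma^1_1\cap\mathbf\Pi^0_{<\xi}$ sets after relativizing, hence $\tau_\xi$-open), and likewise each $D_n$ is $\tau_{\xi'}$-open, so $C_n\times D_n$ is $\tau$-open and $S$ is a $\tau$-open set containing $A$ and disjoint from $B$; therefore $\overline{B}^\tau\cap A=\emptyset$. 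For $(2)\Rightarrow(1)$: assuming $A\cap\overline{B}^\tau=\emptyset$, the set $\overline{B}^\tau$ is a $\Sigma^1_1$ set (by the preliminary observation) which is $\tau$-closed and contains $B$, and its complement is a $\tau$-open set containing $A$; I would then use the fact that $A$ is $\Sigma^1_1$ together with a covering/compactness-type argument in the $\tau$ topology — precisely the first reflection argument of Louveau–Lecomte — to extract from the $\tau$-open cover of $A$ by basic rectangles a countable subfamily $\{C_n\times D_n\}$ still covering $A$ and still disjoint from $B$, where moreover the $C_n$ can be taken $\mathbf\Sigma^0_\xi$ and the $D_n$ can be taken $\mathbf\Sigma^0_{\xi'}$. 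This yields the desired $(\mathbf\Sigma^0_\xi\times\mathbf\Sigma^0_{\xi'})_\sigma$ separator.

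The main obstacle — and the reason this is only a lemma whose proof is "the exact same" as an existing one — is the reflection step in $(2)\Rightarrow(1)$: one must argue that a $\Sigma^1_1$ set $A$ contained in a $\tau$-open set is already contained in a \emph{countable} union of basic $\tau$-open rectangles, and that these basic rectangles can be chosen to be genuine $\mathbf\Sigma^0_\xi$ (resp. $\mathbf\Sigma^0_{\xi'}$) sets rather than merely $\Sigma^1_1\cap\mathbf\Pi^0_{<\xi}$ generators. This is exactly where effective descriptive set theory enters: one works with a $\Sigma^1_1$ set coded in a parameter, uses the Louveau-style analysis of the $\tau_\xi$ topology (a $\Sigma^1_1$ set has a $\Sigma^1_1$ "$\tau_\xi$-interior" description and the $\tau_\xi$-closure of a $\Sigma^1_1$ set is $\Sigma^1_1$), and invokes the Kleene recursion theorem to close the construction. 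Since the two coordinates are handled independently — the topology is a genuine product and $\xi,\xi'$ never interact — the passage from the single-exponent statement of \cite{LZ12} to the present two-exponent one introduces no new difficulty, and I would simply remark that the original proof applies mutatis mutandis, replacing $\tau_\xi\times\tau_\xi$ by $\tau_\xi\times\tau_{\xi'}$ throughout.
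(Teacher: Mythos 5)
Your sketch mislocates the entire content of the lemma. The step you call easy, $(1)\Rightarrow(2)$, rests on the claim that each boldface side $C_n\in\mathbf\Sigma^0_\xi$ is $\tau_\xi$-open, justified by the parenthetical ``a $\mathbf\Sigma^0_\xi$ set is a countable union of $\Sigma^1_1\cap\mathbf\Pi^0_{<\xi}$ sets after relativizing''. This is false as stated: $\tau_\xi$ is generated by the \emph{lightface} $\Sigma^1_1$ sets which are $\mathbf\Pi^0_{<\xi}$, and a boldface $\mathbf\Pi^0_{<\xi}$ set with a complicated code need not contain any lightface $\Sigma^1_1$ set around a given point (for $\xi=2$, a singleton $\{\alpha\}$ with $\{\alpha\}\notin\Sigma^1_1$ is closed, hence $\mathbf\Sigma^0_2$, but not $\tau_2$-open). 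Relativizing to a parameter coding the $C_n,D_n$ does not repair this: it replaces $\tau_\xi$ by the finer topology $\tau_\xi(\alpha)$, whose closure of $B$ is \emph{smaller}, so disjointness of $A$ from $\overline{B}^{\tau_\xi(\alpha)\times\tau_{\xi'}(\alpha)}$ does not yield statement (2), which refers to the unrelativized closure. In fact $(1)\Rightarrow(2)$ is precisely the hard direction: in dimension one it \emph{is} Louveau's separation theorem, and in the rectangle setting it is the substantive content of the Lecomte--Zeleny lemma; beyond the incorrect parenthetical, your proposal offers no argument for it.

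Conversely, the direction you present as the main obstacle, $(2)\Rightarrow(1)$, needs no reflection, no recursion theorem, and no extraction step: since there are only countably many lightface $\Sigma^1_1$ sets, $\tau_\xi\times\tau_{\xi'}$ has a countable basis of rectangles $U\times V$ with $U\in\Sigma^1_1\cap\mathbf\Pi^0_{<\xi}$ and $V\in\Sigma^1_1\cap\mathbf\Pi^0_{<\xi'}$; such a $U$ is $\mathbf\Pi^0_\nu$ for some $\nu<\xi$, hence already $\mathbf\Sigma^0_\xi$ (similarly for $V$), and $\neg\overline{B}^{\tau_\xi\times\tau_{\xi'}}$ is exactly the union of the countably many basic rectangles disjoint from $B$, so under (2) it is itself a $(\mathbf\Sigma^0_\xi\times\mathbf\Sigma^0_{\xi'})_\sigma$ separator --- there is nothing to upgrade from ``generators'' to ``genuine $\mathbf\Sigma^0_\xi$ sets''. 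Note that the paper gives no proof at all here: it records that the two-exponent statement follows by the same proof as in \cite{LZ12}, so your closing ``mutatis mutandis'' remark matches the paper's stance; but your attempted reconstruction of that proof inverts where the difficulty lies, and the only genuinely hard implication is the one your argument gets wrong.
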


We can deduce from this a dual condition for the separability by one $\mathbf\Pi^0_\xi$ rectangle.

\begin{lem}\label{lem1} Let $0<\xi,\xi'<\omega^{CK}_1$,  $X,Y$ be recursively presented Polish spaces, and $A$, $B$ be disjoint $\Sigma^1_1$ subsets of $X\times Y$. The following are equivalent:
	\begin{enumerate}
		\item $A$ is separable from $B$ by a $(\mathbf\Pi^0_\xi\cap \Sigma^1_1)\times (\mathbf\Pi^0_{\xi'}\cap \Sigma^1_1)$ set,
		\item $A$ is separable from $B$ by a $(\mathbf\Pi^0_\xi\cap \Delta^1_1)\times (\mathbf\Pi^0_{\xi'}\cap \Delta^1_1)$ set,
		\item $A$ is separable from $B$ by a $\mathbf\Pi^0_\xi\times \mathbf\Pi^0_{\xi'}$ set,
		\item $B\cap \overline{\Pi_0[A]\times\Pi_1[A]}^{\tau_\xi\times\tau_{\xi'}}=\emptyset$.
	\end{enumerate}  
\end{lem}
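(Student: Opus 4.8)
The plan is to prove the four statements equivalent by the cycle $(2)\Rightarrow(1)\Rightarrow(3)\Rightarrow(4)\Rightarrow(2)$. The first two implications are immediate, since $\Delta^1_1\subseteq\Sigma^1_1$ and $\mathbf\Pi^0_\xi\cap\Sigma^1_1\subseteq\mathbf\Pi^0_\xi$ (and likewise for $\xi'$). For $(3)\Rightarrow(4)$, I would first record that every $\mathbf\Pi^0_\xi$ subset of $X$ is $\tau_\xi$-closed: for $\xi=1$ this is the definition of $\tau_1$, and for $\xi\geq 2$ a $\mathbf\Sigma^0_\xi$ set is a countable union of sets in $\mathbf\Pi^0_{<\xi}$, each of which, being Borel, lies in the generating family $\Sigma^1_1\cap\mathbf\Pi^0_{<\xi}$ of $\tau_\xi$ and is hence $\tau_\xi$-open; so $\mathbf\Sigma^0_\xi\subseteq\tau_\xi$ and $\mathbf\Pi^0_\xi$ consists of $\tau_\xi$-closed sets. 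Now if $A\subseteq P\times Q$ with $(P\times Q)\cap B=\emptyset$, $P\in\mathbf\Pi^0_\xi$, $Q\in\mathbf\Pi^0_{\xi'}$, then $\Pi_0[A]\subseteq P$ and $\Pi_1[A]\subseteq Q$; since the closure of a product is the product of the closures, $\overline{\Pi_0[A]\times\Pi_1[A]}^{\tau_\xi\times\tau_{\xi'}}=\overline{\Pi_0[A]}^{\tau_\xi}\times\overline{\Pi_1[A]}^{\tau_{\xi'}}\subseteq P\times Q$, which misses $B$, i.e. $(4)$ holds.

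The core of the argument is $(4)\Rightarrow(2)$, and the plan is to apply Theorem \ref{LO1} once in each coordinate, mimicking the two-step Lusin argument used for $(1)\Rightarrow(2)$ in Proposition \ref{Borel}, with $\mathbf\Sigma^0_\xi$-separation replacing Lusin separation. Write $A_0:=\Pi_0[A]$, $A_1:=\Pi_1[A]$ and $F_0:=\overline{A_0}^{\tau_\xi}$, $F_1:=\overline{A_1}^{\tau_{\xi'}}$; these last two are $\Sigma^1_1$ by the remark following Theorem \ref{LO1}, and since closure commutes with products, hypothesis $(4)$ says exactly $B\cap(F_0\times F_1)=\emptyset$. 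I would then look at the $\Sigma^1_1$ sets $B':=B\cap(X\times F_1)$ and $\Pi_0[B']$: a point of $\Pi_0[B']\cap F_0$ would give some $(x,y)\in B$ with $x\in F_0$ and $y\in F_1$, which is excluded, so $\Pi_0[B']\cap\overline{A_0}^{\tau_\xi}=\emptyset$. Reading Theorem \ref{LO1} dually and effectively — for disjoint $\Sigma^1_1$ sets $C,D$, $D$ is separable from $C$ by a $\mathbf\Pi^0_\xi\cap\Delta^1_1$ set iff $C\cap\overline{D}^{\tau_\xi}=\emptyset$ — I obtain $P\in\mathbf\Pi^0_\xi\cap\Delta^1_1$ with $A_0\subseteq P$ and $P\cap\Pi_0[B']=\emptyset$.

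Now I would repeat the argument on the second coordinate, with $P$ in hand. Let $C_1:=\Pi_1[B\cap(P\times Y)]$, which is $\Sigma^1_1$ because $P$ is $\Delta^1_1$. If $y\in C_1\cap F_1$, pick $x\in P$ with $(x,y)\in B$; then $(x,y)\in B'$, hence $x\in\Pi_0[B']\cap P$, a contradiction, so $C_1\cap\overline{A_1}^{\tau_{\xi'}}=\emptyset$. Theorem \ref{LO1} again yields $Q\in\mathbf\Pi^0_{\xi'}\cap\Delta^1_1$ with $A_1\subseteq Q$ and $Q\cap C_1=\emptyset$. Finally $A\subseteq A_0\times A_1\subseteq P\times Q$, while any point of $(P\times Q)\cap B$ would lie in $C_1\cap Q=\emptyset$; thus $P\times Q$ separates $A$ from $B$ and $(2)$ holds.

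The step I expect to be delicate is $(4)\Rightarrow(2)$, for a specific reason: one cannot simply use the rectangle $F_0\times F_1$, because the $\tau_\xi$-closure of an analytic set is in general only $\Sigma^1_1$, not $\mathbf\Pi^0_\xi$. It is exactly the two successive invocations of Louveau's theorem — each turning a statement of the form ``a $\tau$-closure is disjoint from an analytic set'' into a genuine $\mathbf\Pi^0_\xi$- (respectively $\mathbf\Pi^0_{\xi'}$-) separation with $\Delta^1_1$ witnesses — that promote the purely topological condition $(4)$ into an honest rectangle of the prescribed complexity. Beyond that, the only care needed is to check that all the auxiliary sets ($F_0$, $F_1$, $B'$, $\Pi_0[B']$, $C_1$) stay $\Sigma^1_1$, so that the effective form of Theorem \ref{LO1} applies and actually delivers $\Delta^1_1$ sides.
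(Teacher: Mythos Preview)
Your argument for $(3)\Rightarrow(4)$ has a real gap: you claim every $\mathbf\Pi^0_\xi$ set is $\tau_\xi$-closed because a $\mathbf\Pi^0_{<\xi}$ set, ``being Borel, lies in the generating family $\Sigma^1_1\cap\mathbf\Pi^0_{<\xi}$''. But the $\Sigma^1_1$ in the definition of $\tau_\xi$ is lightface; Borel only gives boldface $\mathbf\Sigma^1_1$. The claim is in fact false. If $x\in\omega^\omega$ is a $\Pi^1_1$-singleton that is not $\Delta^1_1$, then $\{x\}$ is closed but not $\tau_2$-open: the only nonempty closed subset of $\{x\}$ is $\{x\}$ itself, and a $\Sigma^1_1$ singleton must have its element in $\Delta^1_1$. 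Hence $\omega^\omega\setminus\{x\}$ is $\mathbf\Pi^0_2$ (even open) but not $\tau_2$-closed; it is moreover a $\Sigma^1_1$ set whose $\tau_2$-closure is all of $\omega^\omega$, strictly larger than the $\mathbf\Pi^0_2$ set $\omega^\omega\setminus\{x\}$ that contains it. So the inclusion $\overline{\Pi_0[A]}^{\tau_\xi}\subseteq P$ you need does not follow from $P\in\mathbf\Pi^0_\xi$. The paper proves $(3)\Rightarrow(4)$ differently, via Lemma~\ref{LZ1}: the complement of the separating rectangle $P\times Q$ is a $(\mathbf\Sigma^0_\xi\times\mathbf\Sigma^0_{\xi'})_\sigma$ set separating $B$ from $\Pi_0[A]\times\Pi_1[A]$, and since both of these are $\Sigma^1_1$, Lemma~\ref{LZ1} yields $(4)$. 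The $\Sigma^1_1$ hypothesis on $B$ is genuinely used here.

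Your closing remark also rests on a misconception that makes you work harder than necessary. Louveau proved (see \cite{LO1980}, and the line after Theorem~\ref{LO1} in the paper) that the $\tau_\xi$-closure of a $\Sigma^1_1$ set is $\mathbf\Pi^0_\xi\cap\Sigma^1_1$, not merely $\Sigma^1_1$. Hence $F_0\times F_1$ \emph{is} already a $(\mathbf\Pi^0_\xi\cap\Sigma^1_1)\times(\mathbf\Pi^0_{\xi'}\cap\Sigma^1_1)$ separator, and this is precisely the paper's one-line proof of $(4)\Rightarrow(1)$. Your $(4)\Rightarrow(2)$ is correct and amounts to the paper's $(4)\Rightarrow(1)\Rightarrow(2)$ rolled together; just be aware that what you are invoking is Louveau's effective separation theorem (Theorem~B of \cite{LO1980}, giving $\Delta^1_1$ witnesses), which is stronger than Theorem~\ref{LO1} as stated in the paper.
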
 
\begin{proof}
$(1\Rightarrow 2)$. Let $C\times D\in(\mathbf\Pi^0_\xi\cap \Sigma^1_1)\times (\mathbf\Pi^0_{\xi'}\cap \Sigma^1_1)$ such that it separates $A
$ from $B$. Then, $C$ is a $\mathbf\Pi^0_\xi$ which separates $\Pi_0[A]$ from $\Pi_0[(X\times D)\cap B]$. Since these two sets are $\Sigma^1_1$, by \cite[Theorem B]{LO1980} there is $C'\in\mathbf\Pi^0_\xi\cap\Delta^1_1$ which separates $\Acero$ from $\Pi_0[(X\times D)\cap B]$. From this, we obtain that $\Auno$ is separated from $\Pi_1[(C'\times Y)\cap B]$ by $D$. Since $D$ is $\mathbf\Pi^0_{\xi'}$, by \cite[Theorem B]{LO1980} we obtain $D'\in\mathbf\Pi^0_\xi\cap\Delta^1_1$, which separates these two sets. In particular, $C'\times D'$ separates $A$ from $B$.
 
$(2\Rightarrow 3).$ It is obvious.

$(3\Rightarrow 4).$ Note that if $A$ is separable from $B$ by a $\mathbf\Pi^0_\xi\times\mathbf\Pi^0_{\xi'}$ set, then so is $\Pi_0[A]\times\Pi_1[A]$. It follows that $B$ will be separable from $\Pi_0[A]\times\Pi_1[A]$ by a $(\mathbf\Sigma^0_\xi\times\mathbf\Sigma^0_{\xi'})_\sigma$ set, namely the complement of the $\mathbf\Pi^0_\xi\times\mathbf\Pi^0_{\xi'}$ set. Finally, apply Lemma \ref{LZ1}.

$(4\Rightarrow 1).$ It is the fact that $\overline{C}^{\tau_\xi}$ is a $\mathbf\Pi^0_\xi\cap \Sigma^1_1$, if $C$ is $\Sigma^1_1$, as shown in \cite{LO1980}.
\end{proof}

We would like to find a similar topological characterization for the separability by a set in some of the other classes $\Gamma\times\Gamma'$. We start with a sufficient condition for the separability by a $\mathbf\Sigma^0_\xi\times\mathbf\Sigma^0_{\xi'}$ set.

\begin{lem} \label{L2} Let $0<\xi,\xi'<\omega^{CK}_1$, $X,Y$ be recursively presented Polish spaces, and $A,B$ be disjoint subsets of $X\times Y$. If $A$ is not separable from $B$ by a $\mathbf\Sigma^0_\xi\times\mathbf\Sigma^0_{\xi'}$ set, then at least one of the following holds.
\begin{enumerate}
	\item There is $x\in\Pi_0[A]$ such that, for every $\tau_\xi$-open neighborhood $U$ of $x$, there is  $y\in \Pi_1[A]$ such that, for every $\tau_{\xi'}$-open neighborhood $V$ of $y$, $(U\times V)\cap B\neq\emptyset$.
	\item There is $y\in \Pi_1[A]$ such that, for every $\tau_{\xi'}$-open neighborhood $V$ of $y$, there is $x\in\Pi_0[A]$ such that, for every $\tau_\xi$-open neighborhood $U$ of $x$, $(U\times V)\cap B\neq\emptyset$.
\end{enumerate}
\end{lem}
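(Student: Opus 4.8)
The plan is to prove the contrapositive: assuming that neither (1) nor (2) holds, I will construct a $\mathbf\Sigma^0_\xi\times\mathbf\Sigma^0_{\xi'}$ set separating $A$ from $B$. Negating (1) says: for every $x\in\Pi_0[A]$ there is a $\tau_\xi$-open neighborhood $U_x$ of $x$ such that for every $y\in\Pi_1[A]$ there is a $\tau_{\xi'}$-open neighborhood $V_{x,y}$ of $y$ with $(U_x\times V_{x,y})\cap B=\emptyset$. Negating (2) gives the symmetric statement with the roles of the two coordinates exchanged. The idea is to use the first of these to peel off the bad part of $B$ coordinate by coordinate, just as in the proof of $(1\Rightarrow 2)$ of Proposition \ref{Borel} and in the proof of Lemma \ref{lem1}, but now working inside the refined topologies $\tau_\xi$ and $\tau_{\xi'}$ rather than the original ones.

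First I would fix $x\in\Pi_0[A]$ and the neighborhood $U_x$ supplied by the negation of (1). For this fixed $x$, the family $\{V_{x,y}:y\in\Pi_1[A]\}$ is a $\tau_{\xi'}$-open cover of $\Pi_1[A]$ such that $(U_x\times V_{x,y})\cap B=\emptyset$ for each $y$. Taking $W_x:=\bigcup_{y\in\Pi_1[A]}V_{x,y}$, this is a $\tau_{\xi'}$-open set containing $\Pi_1[A]$ with $(U_x\times W_x)\cap B=\emptyset$. Now $\{U_x:x\in\Pi_0[A]\}$ is a $\tau_\xi$-open cover of $\Pi_0[A]$. Because $\tau_\xi$ is generated by a countable basis of $\Sigma^1_1\cap\mathbf\Pi^0_{<\xi}$ sets (using $\xi<\omega_1^{CK}$, so there is a recursive enumeration of such a basis), and likewise for $\tau_{\xi'}$, I can extract a countable subcover: there is a sequence $(x_n)_{n\in\omega}$ in $\Pi_0[A]$ with $\Pi_0[A]\subseteq\bigcup_n U_{x_n}$. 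One has to be a little careful here — $\tau_\xi$ need not be second countable in the naive sense, but the relevant point, exploited already in Louveau's work, is that $\Sigma^1_1$ sets that are $\tau_\xi$-open are countable unions of basic $\tau_\xi$-open sets from a fixed recursive family, so a Lindel\"of-type argument applies to the $\Sigma^1_1$ set $\Pi_0[A]$. Then $R:=\bigcup_n (U_{x_n}\times W_{x_n})$ is a countable union of $\tau_\xi$-open $\times$ $\tau_{\xi'}$-open rectangles, it contains $\Pi_0[A]\times\Pi_1[A]\supseteq A$, and it is disjoint from $B$. A $\tau_\xi$-open set is in particular $\mathbf\Sigma^0_\xi$ (and symmetrically), so $R$ is a $(\mathbf\Sigma^0_\xi\times\mathbf\Sigma^0_{\xi'})_\sigma$ set separating $A$ from $B$; invoking Lemma \ref{LZ1}, this already shows that the two failures cannot both happen if $A$ is non-separable by such a countable union — but I want a single rectangle, so the argument needs one more turn of the crank, using the negation of (2) to collapse the countable union into one rectangle, in the style of the back-and-forth in Lemma \ref{lem1}.

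The main obstacle is exactly this last passage from a countable union of rectangles to a single $\mathbf\Sigma^0_\xi\times\mathbf\Sigma^0_{\xi'}$ rectangle. The right way, I expect, is not to combine the two constructions symmetrically but to run the one-sided construction twice. From the negation of (1) I get, for the $\Sigma^1_1$ set $\Pi_0[A]$, a $\tau_\xi$-open set $U\supseteq\Pi_0[A]$ and a $\tau_{\xi'}$-open set $W\supseteq\Pi_1[A]$ with $(U\times W)\cap B=\emptyset$, \emph{provided} one can choose the $U_x$ uniformly enough that their union $U$ still has the property $(U\times W)\cap B=\emptyset$ — which will require choosing, for each $x$, a single neighborhood and then noting that $(U\times W)\cap B=\bigcup_{n,m}(U_{x_n}\times V_{x_n,y_m})\cap B=\emptyset$ because each piece is empty by construction. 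Once I have the single rectangle $U\times W$ with $A\subseteq U\times W$ and $(U\times W)\cap B=\emptyset$, I am done: $U$ is $\mathbf\Sigma^0_\xi$ and $W$ is $\mathbf\Sigma^0_{\xi'}$. So in fact the negation of (2) is not needed at all for the conclusion in this formulation — it is there to make the dichotomy exact and symmetric, and I would remark that the negation of (1) alone already produces a separating $\mathbf\Sigma^0_\xi\times\mathbf\Sigma^0_{\xi'}$ rectangle. The one genuinely delicate point to check is the Lindel\"of extraction of countable subcovers inside the $\tau_\xi$-topologies, for which I would cite the relevant lemma from \cite{LO1980} on the structure of $\Sigma^1_1$ sets open in $\tau_\xi$; everything else is a routine reorganization of the classical two-step separation argument.
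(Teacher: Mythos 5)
Your first step (from the negation of~(1), produce for each $x\in\Pi_0[A]$ a $\tau_\xi$-open $U_x\ni x$ and a $\tau_{\xi'}$-open $W_x=\bigcup_y V_{x,y}\supseteq\Pi_1[A]$ with $(U_x\times W_x)\cap B=\emptyset$, then extract a countable subcover) is fine, but it only yields a countable union of rectangles, and the passage from that union to a single rectangle is exactly where your argument breaks. The identity you invoke, $(U\times W)\cap B=\bigcup_{n,m}(U_{x_n}\times V_{x_n,y_m})\cap B$, is wrong: writing $U=\bigcup_n U_{x_n}$ and $W=\bigcup_{n,m}V_{x_n,y_m}$, the product $U\times W$ also contains the cross terms $U_{x_{n'}}\times V_{x_n,y_m}$ with $n'\neq n$, and nothing in the negation of~(1) controls those (the alternative, $W=\bigcap_x W_x$, contains $\Pi_1[A]$ but is an uncountable intersection of $\tau_{\xi'}$-open sets, so it need not be $\mathbf\Sigma^0_{\xi'}$). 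Consequently your concluding claim that ``the negation of (2) is not needed at all'' is false. Concretely, take $X=Y=\mathbb{R}$, $A=\{(1/n,0)\,|\,n\geq 1\}$ and $B=\{\big(\tfrac{1}{n}(1+\tfrac{1}{k}),\tfrac{1}{n}\big)\,|\,n\geq1,\ k\ \text{large}\}$: the negation of~(1) holds (near each $1/n$ the relevant part of $B$ has second coordinates equal to $1/n$, bounded away from $0$), yet any open rectangle $U\times V\supseteq A$ has $V\supseteq(-\varepsilon,\varepsilon)$ and $U\ni 1/n$ for all $n$, hence meets $B$; so no separating $\mathbf\Sigma^0_1\times\mathbf\Sigma^0_1$ rectangle exists. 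Here clause~(2) of the lemma holds, which is why the statement is a disjunction.

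The paper's proof uses both negations together with a priority (index-comparison) device that your sketch is missing. Fixing countable bases $(U_n)$, $(V_m)$ of $\tau_\xi$, $\tau_{\xi'}$, one first picks witnesses $n_x$ and $m_y$ to the negations of~(1) and~(2); then, crucially, for each $y$ one refines $V_{m_y}$ to a basic $V_{m'_{n,y}}\ni y$ that kills $B$ simultaneously against \emph{all} basic sets $U_{n'}$ with $n'\in N$, $n'\leq m_y$ (a finite intersection of witnesses coming from the negation of~(1)), and dually one refines the $U$'s using the negation of~(2). The separating rectangle is then the union of the refined $U$'s times the union of the refined $V$'s, and a cross term $U_{n'_{m,x}}\times V_{m'_{n,y}}$ is handled by comparing the integers $n_x$ and $m_y$: whichever is smaller, the other side's refinement was built to avoid $B$ against it. If you want to salvage your write-up, this two-sided refinement step is what has to replace the ``one more turn of the crank''.
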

\begin{proof} Suppose that neither 1. nor 2. holds. Since $\tau_\xi(X)$ and $\tau_{\xi'}(Y)$ have countable basis $\{U_n\}_{n\in\omega}$ and $\{V_m\}_{m\in\omega}$ respectively, for each $x\in \Pi_0[A]$ we can find a $n_x$ that witnesses the negation of 1. We can find respectively a $m_y$ that witnesses the negation of 2. for each $y\in\Pi_1[A]$. Let $N:=\{n_x|x\in\Pi_0[A]\}$ and $M:=\{m_y|y\in\Pi_1[A]\}$. 

Now, each $n\in N$ satisfies the following. For each $y\in\Pi_1[A]$ there exists an $m'_{n,y}\in\omega$ such that, for all $n'\leq m_{y}$ in $N$, $(U_{n'}\times V_{m'_{n,y}})\cap B=\emptyset$ and $y\in V_{m'_{n,y}}\subseteq V_{m_y}$. Indeed, each $n'\in N$ is a $n_x$ for some $x$, and so, we only need to take $V_{m'_{n,y}}$ as a basic open which witnesses the negation of 1 for each $n_x$. Likewise, we find for each $m\in M$ and each $x\in \Pi_0[A]$  a $n'_{m,y}$ which satisfies the dual statement. Set $N'=\{n'_{m,x}|m\in M, x\in \Pi_0[A]\}$ and $M':=\{m'_{n,y}|n\in N, y\in \Pi_1[A]\}$.

Finally, we claim that $(\cup_{n\in N'}U_n)\times(\cup_{m\in M'}V_{m})$
separates $A$ from $B$. It obviously contains $A$. To see that it does not intersect $B$ suppose it does, so that there are $n'\in N'$ and $m'\in M'$, such that $(U_{n'}\times V_{m'})\cap B\neq\emptyset$.  We must then have $n'=n'_{m,x}$ for some $m,x$ and $m'=m'_{n,y}$ for some $n,y$. In particular, we can suppose that $n_x\leq m_y$ (the other case is similar). Then, $U_{n'}\times V_{m'}\subseteq U_{n_x}\times V_{m'}$, and by the definition of $m'_{n,y}$ this last one should not intersect $B$.  
\end{proof}

We would like to point out that in Lemma \ref{L2}, there are no hypothesis on the complexity of $A$ nor $B$. Also, for the open case, the effective hypothesis on the spaces is useless, the theorem still holds with general Polish spaces. In fact, we can obtain a different version in the open case, if we add some hypothesis on the complexity of $A$ and $B$. 

\begin{lem} Let $X,Y$ be recursively presented Polish spaces, and $A,B$ be disjoint $\Sigma^1_1$ subsets of $X\times Y$. The following are equivalent:
\begin{enumerate}
	\item $A$ is not separable from $B$ by a $\open\times\open$ set,
	\item $A$ is not separable from $B$ by a $(\open\cap \Delta^1_1)\times(\open\cap \Delta^1_1)$ set,
	\item  At least one of the following holds,
\begin{enumerate}
	\item there is $x\in\Pi_0[A]$ such that, for every $\tau_1$-open neighborhood $U$ of $x$, there is  $y\in \Pi_1[A]$ such that, for every $\tau_{1}$-open neighborhood $V$ of $y$, $(U\times V)\cap B\neq\emptyset$,
	\item there is $y\in \Pi_1[A]$ such that, for every $\tau_{1}$-open neighborhood $V$ of $y$, there is $x\in\Pi_0[A]$ such that, for every $\tau_1$-open neighborhood $U$ of $x$, $(U\times V)\cap B\neq\emptyset$.
\end{enumerate}
\end{enumerate}
\end{lem}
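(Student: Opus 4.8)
The plan is to prove the equivalence by establishing the chain $(2)\Rightarrow(1)\Rightarrow(3)\Rightarrow(2)$, reusing Lemma~\ref{L2} for the hard direction and a determinacy-free separation argument for the rest. The implication $(2)\Rightarrow(1)$ is trivial, since a $(\open\cap\Delta^1_1)\times(\open\cap\Delta^1_1)$ set is in particular a $\open\times\open$ set, so non-separability by the former is a weaker hypothesis than non-separability by the latter. The implication $(1)\Rightarrow(3)$ is exactly Lemma~\ref{L2} applied with $\xi=\xi'=1$: if $A$ is not separable from $B$ by a $\open\times\open=\mathbf\Sigma^0_1\times\mathbf\Sigma^0_1$ set, then one of the two conditions in Lemma~\ref{L2} holds, and for $\xi=\xi'=1$ we have $\tau_1(X)$ and $\tau_1(Y)$ equal to the original topologies, so these are precisely (3)(a) and (3)(b).

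The real content is $(3)\Rightarrow(2)$, or rather its contrapositive: assuming $A$ \emph{is} separable from $B$ by a $(\open\cap\Delta^1_1)\times(\open\cap\Delta^1_1)$ set is too strong to assume directly, so instead I would prove $\neg(3)\Rightarrow\neg(2)$ by first getting \emph{any} open rectangle separating $A$ from $B$ and then refining it to a $\Delta^1_1$ one. For the first step: assume neither (3)(a) nor (3)(b) holds. This is formally the same situation as the opening of the proof of Lemma~\ref{L2} with $\xi=\xi'=1$; running that argument verbatim produces open sets $U=\bigcup_{n\in N'}U_n$ and $V=\bigcup_{m\in M'}V_m$ with $A\subseteq U\times V$ and $(U\times V)\cap B=\emptyset$. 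So far this $U\times V$ is merely $\open\times\open$, not $\Delta^1_1$. To upgrade it I would use the $\Sigma^1_1$-hypotheses on $A$ and $B$: the open set $U$ separates the $\Sigma^1_1$ set $\Pi_0[A]$ from the $\Sigma^1_1$ set $\Pi_0[(X\times V)\cap B]$ (disjointness of these is exactly what $(U\times V)\cap B=\emptyset$ together with $A\subseteq U\times V$ gives). By \cite[Theorem~B]{LO1980} applied with $\xi=1$ there is $U'\in\open\cap\Delta^1_1$ separating them; then $U'\times V$ still works, and $V$ now separates the $\Sigma^1_1$ set $\Pi_1[A]$ from the $\Sigma^1_1$ set $\Pi_1[(U'\times Y)\cap B]$, so a second application of \cite[Theorem~B]{LO1980} gives $V'\in\open\cap\Delta^1_1$, and $U'\times V'$ is the desired $(\open\cap\Delta^1_1)\times(\open\cap\Delta^1_1)$ separator. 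This is the same bootstrapping already used in the proof of Lemma~\ref{lem1}, just with $\mathbf\Pi^0_\xi$ replaced by $\open$.

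The main obstacle, as usual in these arguments, is purely bookkeeping: verifying that the two $\Sigma^1_1$ sets fed into \cite[Theorem~B]{LO1980} at each of the two steps are genuinely disjoint and that the resulting $\Delta^1_1$ set still does its job after we change the other coordinate's factor from $V$ to the fixed $V$ (no circularity arises because we refine the first coordinate before touching the second). There is no determinacy or effective-descriptive subtlety beyond what Lemma~\ref{L2} and Theorem~B already supply, and the countable-basis argument from Lemma~\ref{L2} transfers without change because $\tau_1$ on a recursively presented Polish space has a countable basis. I would therefore present this proof compactly, citing Lemma~\ref{L2} for the combinatorial core and \cite[Theorem~B]{LO1980} for the two refinements, exactly parallel to the proof of Lemma~\ref{lem1}.
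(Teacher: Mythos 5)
Your overall architecture has two problems, one of bookkeeping and one of substance. First, the implication you call trivial goes the wrong way: since $(\open\cap\Delta^1_1)\times(\open\cap\Delta^1_1)$ separators form a \emph{subclass} of $\open\times\open$ separators, the trivial implication is $(1)\Rightarrow(2)$, not $(2)\Rightarrow(1)$; the latter (equivalently $\neg(1)\Rightarrow\neg(2)$: an arbitrary separating open rectangle yields an effective one) is part of the real content of the lemma, and your chain $(2)\Rightarrow(1)\Rightarrow(3)\Rightarrow(2)$ therefore does not close the equivalence. What is actually easy, and what the paper uses to close the loop, is $\neg(1)\Rightarrow\neg(3)$: each side of a separating open rectangle witnesses the negation of (3)(a) and (3)(b).

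Second, and more seriously, the upgrade inside your $\neg(3)\Rightarrow\neg(2)$ step does not work as written. Running the proof of Lemma \ref{L2} verbatim produces $U=\bigcup_{n\in N'}U_n$ and $V=\bigcup_{m\in M'}V_m$ where $N',M'$ come from completely arbitrary choices of witnesses $n_x,m_y,m'_{n,y},n'_{m,x}$; these index sets need not be definable at all. Consequently $\Pi_0[(X\times V)\cap B]$ is only boldface analytic (lightface $\Sigma^1_1$ in a code $c$ for $V$), not $\Sigma^1_1$, so \cite[Theorem B]{LO1980} applied to it yields at best a $\Delta^1_1(c)$ open set, not a lightface $\Delta^1_1$ one, and statement (2) — which asserts non-separability by $(\open\cap\Delta^1_1)\times(\open\cap\Delta^1_1)$ — is not refuted. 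This is exactly the point the paper's proof is designed to handle: it redoes the construction of Lemma \ref{L2} \emph{effectively}, observing that the relevant selection statements (``$x\in U_n$ and for all $y\in\Auno$ there is $m$ with $y\in V_m$ and $(U_n\times V_m)\cap B=\emptyset$'', etc.) are $\Pi^1_1$, so the $\Delta^1_1$-selection principle lets one choose $n_x,m_y,m'_{n,y},n'_{m,x}$ in a $\Delta^1_1$ way; then $N,M,N',M'$ are $\Sigma^1_1$, the rectangle $(\bigcup_{n\in N'}U_n)\times(\bigcup_{m\in M'}V_m)$ is $(\open\cap\Sigma^1_1)\times(\open\cap\Sigma^1_1)$, and only at that point do the two applications of \cite[Theorem B]{LO1980} (as in Lemma \ref{lem1}) legitimately produce $\Delta^1_1$ sides. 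So the missing idea is precisely the effective re-run of Lemma \ref{L2} via $\Delta^1_1$-selection; citing Lemma \ref{L2} as a black box and bootstrapping afterwards cannot recover the lightface conclusion.
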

\begin{proof} $(1.\Rightarrow 2.)$. It is obvious.

$(2.\Rightarrow 3.)$. As in the previous Lemma, we will show the contrapositive statement. The proof is basically the same, but one needs to be sure that the choices can be made effectively. For this, we fix basis  $(U_n)$ and $(V_m)$ for $X$ and $Y$ respectively, made of $\semirecursive$ sets. 

Note then that, for all $x\in\Acero$, there is $n\in\omega$ such that 
$x\in U_n \land \big(\forall y\in \Auno\exists m\in\omega \big((y\in V_m)\land (U_n\times V_m)\cap B\big)=\emptyset\big)$. 
Note this statement is $\Pi^1_1$ in $x$ and $n$, so by the $\Delta^1_1$-selection principle, we can make the choice of $n_x$ in a $\Delta^1_1$ way. By a similar argument, we can choose $m_y$ in a $\Delta^1_1$ way.  This implies that $N:=\{n_x| x\in \Acero\}$ and $M:=\{m_y|y\in \Auno\}$ are $\Sigma^1_1$. 

Similarly, if we fix $n\in N$ and $y\in\Auno$, there is an $m\in\omega$ such that $(y\in V_{m})\land(V_{m}\subseteq V_{m_y})\land\forall n'\leq m_y \big((U_{n'}\times V_{m})\cap B\big)=\emptyset)$. Again, note that this statement is $\Pi^1_1$ in $n,m$ and $y$, so we can make the choice of $m'_{n,y}$ in a $\Delta^1_1$ way. A similar argument allows us to choose $n'_{m,y}$. This and the previous paragraph show that $N':=\{n'_{m,x}|m\in M , x\in\Acero\}$ and $M':=\{m'_{n,y}|n\in N , y\in\Auno\}$ are $\Sigma^1_1$. 

So with the same argument as in the proof of Lemma \ref{L2}, $(\cup_{n\in N'}U_n)\times (\cup_{m\in M'}V_m)$ is a $(\open\cap\Sigma^1_1)\times(\open\cap\Sigma^1_1)$ set that separates $A$ from $B$. We claim that this is enough. In fact, following the proof of Lemma \ref{lem1}, by applying \cite[Theorem B]{LO1980} twice we can obtain a $(\open\cap\Delta^1_1)\times(\open\cap\Delta^1_1)$ set separating $A$ from $B$.

$(3.\Rightarrow 1.)$. Again, we prove the contrapositive statement. If $A$ is separated from $B$ by a $\open\times\open$ set, then each of the sides of this rectangle will witness the negation of 1. and of 2. respectively. 
\end{proof}

We note that $(1.\Rightarrow 2.)$ and $(2.\Rightarrow 3)$ can be generalized with a similar proof to the $\mathbf\Sigma^0_\xi\times\mathbf\Sigma^0_{\xi'}$ cases. 

We would like to find a general condition for the separabilty by a $\mathbf\Pi^0_\xi\times\mathbf\Sigma^0_{\xi'}$ set. For now, we consider the cases where either $\xi$ or $\xi'$ is equal to 1.

\begin{lem} \label{L3} Let $0<\xi<\omega^{CK}_1$, $X,Y$ be recursively presented Polish spaces, and $A,B$ be disjoint $\Sigma^1_1$ subsets of $X\times Y$. 
\begin{enumerate}
	\item\label{l1} The following are equivalent:
	\begin{enumerate}
		\item\label{l1.1} $A$ is separable from $B$ by a $\mathbf\Pi^0_1\times\mathbf\Sigma^0_\xi$ set,
		\item\label{l1.3} $A$ is separable from $B$ by a $(\mathbf\Pi^0_1\cap\Delta^1_1)\times(\mathbf\Sigma^0_\xi\cap\Delta^1_1)$ set,
		\item\label{l1.2} for all $y\in \Pi_1[A]$ there is a $\tau_\xi$-open neighborhood $V$ of $y$ such that
		\[(\overline{\Pi_0[A]}\times V)\cap B=\emptyset.\]
	\end{enumerate}
	\item\label{l.2} The following are equivalent:
	\begin{enumerate}
		\item\label{l2.1} $A$ is separable from $B$ by a $\mathbf\Pi^0_\xi\times\mathbf\Sigma^0_1$ set,
		\item\label{l2.3} $A$ is separable from $B$ by a $(\mathbf\Pi^0_\xi\cap\Delta^1_1)\times(\mathbf\Sigma^0_1\cap\Delta^1_1)$ set,
		\item\label{l2.2} for all $y\in\Pi_1[A]$ there is an open neighborhood $V$ of $y$ such that
		\[(\overline{\Pi_0[A]}^{\tau_\xi}\times V)\cap B=\emptyset.\]
	\end{enumerate}
\end{enumerate}
\end{lem}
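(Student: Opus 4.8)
The plan is to treat the two parts symmetrically, since each reduces to combining the Louveau topological characterization in one dimension (Theorem \ref{LO1}) with the $\Delta^1_1$-selection/reduction machinery from \cite{LO1980} that was already used in Lemma \ref{lem1}. I focus on part \ref{l1}; part \ref{l.2} is entirely analogous with the roles of the two coordinates exchanged and $\xi=1$ on the first factor, $\xi'=\xi$ on the second, so I will only indicate the change at the end. For part \ref{l1}, the implication $(\ref{l1.3})\Rightarrow(\ref{l1.1})$ is trivial, so the work is in $(\ref{l1.1})\Rightarrow(\ref{l1.2})$ and $(\ref{l1.2})\Rightarrow(\ref{l1.3})$.

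For $(\ref{l1.1})\Rightarrow(\ref{l1.2})$: suppose $C\times D$ separates $A$ from $B$ with $C$ closed and $D$ a $\mathbf\Sigma^0_\xi$ set. Then $\Pi_0[A]\subseteq C$, so $\overline{\Pi_0[A]}\subseteq C$ since $C$ is closed; and $\Pi_1[A]\subseteq D$. Now fix $y\in\Pi_1[A]$. The set $B\cap(\overline{\Pi_0[A]}\times Y)$ is disjoint from $C\times D$, hence $\Pi_1\big[B\cap(\overline{\Pi_0[A]}\times Y)\big]$ is disjoint from $D\supseteq\Pi_1[A]$. Since $D$ is a $\mathbf\Sigma^0_\xi$ set containing $y$, it is a $\tau_\xi$-open set — here one uses that $\mathbf\Sigma^0_\xi$ sets are $\tau_\xi$-open, which is immediate from the definition of $\tau_\xi$ as generated by $\Sigma^1_1\cap\mathbf\Pi^0_{<\xi}$ sets together with the fact that a $\mathbf\Sigma^0_\xi$ set is a countable union of $\mathbf\Pi^0_{<\xi}$ sets; strictly, we take $V$ to be a $\tau_\xi$-open neighbourhood of $y$ contained in $D$, which exists because $D\in\tau_\xi$. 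Then $(\overline{\Pi_0[A]}\times V)\cap B\subseteq (C\times D)\cap B=\emptyset$, giving (\ref{l1.2}).

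For $(\ref{l1.2})\Rightarrow(\ref{l1.3})$: this is where the effective content lies, and it is the main obstacle. Assume (\ref{l1.2}). First, $C_0:=\overline{\Pi_0[A]}$ is a $\mathbf\Pi^0_1$ set; since $\Pi_0[A]$ is $\Sigma^1_1$, $C_0$ is $\Sigma^1_1$, hence $\mathbf\Pi^0_1\cap\Sigma^1_1$. I want to replace $V$ by a $\tau_\xi$-open set, uniformly in $y$, so that the union over $y\in\Pi_1[A]$ is $\Sigma^1_1$. Fix a basis $(V_m)_{m\in\omega}$ of $\tau_\xi(Y)$ consisting of $\Sigma^1_1\cap\mathbf\Pi^0_{<\xi}$ (in fact $\Delta^1_1\cap\mathbf\Pi^0_{<\xi}$, after intersecting with a $\Delta^1_1$ set if needed) sets. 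For each $y\in\Pi_1[A]$, the statement ``$y\in V_m\ \wedge\ (C_0\times V_m)\cap B=\emptyset$'' holds for some $m$ by (\ref{l1.2}) — note any $\tau_\xi$-open $V$ contains a basic $V_m$ with $y\in V_m$ — and this statement is $\Pi^1_1$ in $(y,m)$ because $C_0$ and $B$ are $\Sigma^1_1$ so $(C_0\times V_m)\cap B$ is $\Sigma^1_1$ and its emptiness is $\Pi^1_1$. By the $\Delta^1_1$-selection principle we choose $m_y$ in a $\Delta^1_1$ way on the $\Sigma^1_1$ set $\Pi_1[A]$, so $M:=\{m_y\mid y\in\Pi_1[A]\}$ is $\Sigma^1_1$. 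Put $D_0:=\bigcup_{m\in M}V_m$. Then $D_0$ is a $(\mathbf\Sigma^0_\xi\cap\Sigma^1_1)$ set, $\Pi_1[A]\subseteq D_0$, and $(C_0\times D_0)\cap B=\emptyset$ since $B$ meets no $C_0\times V_m$ with $m\in M$. Thus $C_0\times D_0\in(\mathbf\Pi^0_1\cap\Sigma^1_1)\times(\mathbf\Sigma^0_\xi\cap\Sigma^1_1)$ separates $A$ from $B$. Finally, to push this down to $\Delta^1_1$ sides I argue exactly as in the proof of Lemma \ref{lem1}: $C_0$ is a $\mathbf\Pi^0_1$ set separating the $\Sigma^1_1$ sets $\Pi_0[A]$ and $\Pi_0[(X\times D_0)\cap B]$, so by \cite[Theorem B]{LO1980} there is $C'\in\mathbf\Pi^0_1\cap\Delta^1_1$ separating them; then $D_0$ is a $\mathbf\Sigma^0_\xi$ set separating $\Pi_1[A]$ from $\Pi_1[(C'\times Y)\cap B]$, which are $\Sigma^1_1$, so again by \cite[Theorem B]{LO1980} there is $D'\in\mathbf\Sigma^0_\xi\cap\Delta^1_1$ separating them, and $C'\times D'$ separates $A$ from $B$, proving (\ref{l1.3}).

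Part \ref{l.2} follows by the same three steps after swapping the coordinates: there $\overline{\Pi_0[A]}^{\tau_\xi}$ is a $\mathbf\Pi^0_\xi\cap\Sigma^1_1$ set (by the closure-is-$\Sigma^1_1$ remark after Theorem \ref{LO1}), the neighbourhoods $V$ of points $y\in\Pi_1[A]$ are now ordinary ($\tau_1$-)open, the selection of $n_y$ is again via a $\Pi^1_1$ predicate and $\Delta^1_1$-selection, and the final descent to $\Delta^1_1$ sides is the same double application of \cite[Theorem B]{LO1980}. The only point requiring a word of care, and the place where I expect the argument to be most delicate, is the verification that the predicate used for selection is genuinely $\Pi^1_1$: this rests on $\overline{\Pi_0[A]}^{\tau_\xi}$ (resp. $\overline{\Pi_0[A]}$) being $\Sigma^1_1$, which is exactly the effective closure fact from \cite{LO1980} and is what makes the effective $\Delta^1_1$-selection applicable.
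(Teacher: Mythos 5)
The overall architecture you propose (Louveau's one-dimensional characterization plus the effective separation theorem of \cite{LO1980}) is the right one, but your implication $(\ref{l1.1})\Rightarrow(\ref{l1.2})$ rests on a false claim: for $\xi\geq 2$ a boldface $\mathbf\Sigma^0_\xi$ set need \emph{not} be $\tau_\xi$-open. The topology $\tau_\xi$ is generated by the \emph{lightface} $\Sigma^1_1\cap\mathbf\Pi^0_{<\xi}$ sets, of which there are only countably many, so every $\tau_\xi$-open set is a countable union of finite intersections of such sets; a general $\mathbf\Sigma^0_\xi$ set is a countable union of arbitrary boldface $\mathbf\Pi^0_{<\xi}$ sets, which need not be lightface $\Sigma^1_1$. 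Concretely, in $2^\omega$ a singleton $\{\alpha\}$ with $\alpha$ not a $\Sigma^1_1$-singleton is closed, hence $\mathbf\Sigma^0_2$, but if it were $\tau_2$-open it would contain, hence equal, a nonempty finite intersection of $\Sigma^1_1$ generators and so would be $\Sigma^1_1$. Thus you cannot take a basic $\tau_\xi$-open $V\ni y$ inside the boldface side $D$. The repair is the route the paper takes: $D$ separates the two lightface $\Sigma^1_1$ sets $\Pi_1[A]$ and $S:=\Pi_1[(\overline{\Pi_0[A]}\times Y)\cap B]$ by a $\mathbf\Sigma^0_\xi$ set (Theorem \ref{LO1} only needs the two \emph{sets} to be $\Sigma^1_1$, the separator may be boldface), so $\Pi_1[A]\cap\overline{S}^{\tau_\xi}=\emptyset$, and $V:=\neg\overline{S}^{\tau_\xi}$ is a single $\tau_\xi$-open set witnessing $(\ref{l1.2})$ for every $y\in\Pi_1[A]$. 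Your ``entirely analogous'' part \ref{l.2} inherits the same defect in dual form (a boldface $\mathbf\Pi^0_\xi$ set is not $\tau_\xi$-closed); there the paper again invokes Theorem \ref{LO1}, after first shrinking the open side to a lightface $\Sigma^0_1$ neighbourhood $U'$ of $y$ so that $\Pi_0[(X\times U')\cap B]$ is $\Sigma^1_1$.

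A second, smaller gap is in your use of $\Delta^1_1$-selection for $(\ref{l1.2})\Rightarrow(\ref{l1.3})$: the predicate you select on contains the conjunct ``$y\in V_m$'', and for basic $\tau_\xi$-open sets, which are only $\Sigma^1_1$, this conjunct is $\Sigma^1_1$, not $\Pi^1_1$; your justification only covers the emptiness conjunct, so the $\Pi^1_1$-ness of the relation is not established. This is precisely why the paper performs a selection only in part \ref{l.2}, where the basis consists of lightface $\Sigma^0_1$ sets and membership is $\Delta^1_1$, and in part \ref{l1} avoids selection altogether: from $(\ref{l1.2})$ one takes any choice of basic $\tau_\xi$-neighbourhoods and their union is automatically $\mathbf\Sigma^0_\xi$ (a union of members of a countable family of $\mathbf\Pi^0_{<\xi}$ sets), giving the boldface statement $(\ref{l1.1})$; the two applications of \cite[Theorem B]{LO1980} in your last step, done in the order ``first the $\mathbf\Sigma^0_\xi$ side against $\Pi_1[(\overline{\Pi_0[A]}\times Y)\cap B]$, then the closed side'', then yield $(\ref{l1.3})$ without any effectivity of the intermediate rectangle. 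With these two corrections your argument coincides with the paper's proof.
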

\begin{proof} ($1.$)($(a)\Rightarrow (b)$). Suppose that we can separate $A$ from $B$ by a $\close\times\mathbf\Sigma^0_\xi$ set, say $C\times D$. In particular $D$ separates $\Pi_1[A]$ from $\{y\in Y| \exists x\in \overline{\Pi_0[A]} (x,y)\in B\}$. Note that this last set is $\Sigma^1_1$. By \cite[Theorem B]{LO1980}, we obtain a set  $D'\in \mathbf\Sigma^0_\xi\cap\Delta^1_1$ which separates the previous sets. This implies that $\Pi_0[A]$ is separable from $\{x\in X| \exists y\in D' (x,y)\in B\}$ by $\overline{\Pi_0[A]}$. Thus, again by \cite[Theorem B]{LO1980} we obtain a $C'\in\mathbf\Pi^0_1\cap\Delta^1_1$ such that $C'\times D'$ separates A from B.

($(b)\Rightarrow (c)$) Suppose that we can separate $A$ from $B$ by a $(\mathbf\Pi^0_1\cap\Delta^1_1)\times(\mathbf\Sigma^0_\xi\cap\Delta^1_1)$ set, say $C\times D$. In particular $D$ separates $\Pi_1[A]$ from $\{y\in Y| \exists x\in \overline{\Pi_0[A]} (x,y)\in B\}$. Note that this last set is $\Sigma^1_1$. By Lemma \ref{LO1}, $\Pi_1[A]$ is contained in 
\[V=\neg\overline{\{y\in Y| \exists x\in \overline{\Pi_0[A]} (x,y)\in B\}}^{\tau_\xi},\]
which is $\tau_\xi$-open. Therefore, for each $y\in\Pi_1[A]$, $y$ is in $V$ and $\overline{\Pi_0[A]}\times V$ does not intersect $B$, as desired.

($(c)\Rightarrow (a)$). Let $(V_n)_{n\in\omega}$ be a basis for $\tau_\xi$. Then, for each $y\in \Pi_1[A]$, we can find a $n_{y}\in\omega$ such that $V_{n_y}$ witnesses $(c)$. Then:
\[\overline{\Pi_0[A]}\times \bigcup_{y\in\Pi_1[A]}V_{n_y}\]
is a $\close\times\mathbf\Sigma^0_\xi$ that separates $A$ from $B$.

($2.$) Note ($(b)\Rightarrow (a)$) is obvious. We will show ($(a)\Rightarrow (c)$) and ($(c)\Rightarrow (b)$).

($(a)\Rightarrow (c)$). Suppose that we can separate $A$ from $B$ by a $\mathbf\Pi^0_\xi\times\open$ set, say $C\times U$.  Each $y\in \Pi_1[A]$ is in $U$ so there is a a $\semirecursive$ neigborhood $U'$ of $y$ with $U'\subseteq U$. In particular, $(C\times U')\cap B=\emptyset$. This means that $\Pi_0[A]$ is separable from $\{x\in X| \exists y\in U' (x,y)\in B\}$ by $C$, a $\mathbf\Pi^0_\xi$ set. Again, by Lemma \ref{LO1} , we obtain $(\overline{\Pi_0[A]}^{\tau_\xi}\times U')\cap B=\emptyset$.

($(c)\Rightarrow (b)$). Let $(V_n)_{n\in\omega}$ be a basis for $\tau_1$ consisting of $\semirecursive$ sets. Then, for each $y\in \Pi_1[A]$, we can find a $n_{y}\in\omega$ such that $V_{n_y}$ witnesses $(c)$. We claim that we can choose this in a $\Delta^1_1$ way. In fact, ``$V_{n}\cap \Pi_1[(\overline{\Pi_0[A]}^{\tau_\xi}\times Y)\cap B]=\emptyset$'' is a $\Pi^1_1$ property in $n$, so that we can apply the $\Delta^1_1$-selection principle.  Then
\[\overline{\Pi_0[A]}^{\tau_\xi}\times \bigcup_{y\in\Pi_1[A]}V_{n_y}\]
is a $(\mathbf\Pi^0_\xi\cap\Sigma^1_1)\times(\open\cap\Sigma^1_1)$ that separates $A$ from $B$. By applying \cite[Theorem B]{LO1980} twice as before, we can find $C'\times D'\in(\mathbf\Pi^0_\xi\cap\Delta^1_1)\times(\mathbf\Sigma^0_1\cap\Delta^1_1)$ which separates $A$ from $B$.  
\end{proof}

Note that in the case $\xi=1$, we can get a stronger version in the non-effective case. We only require the spaces to be Polish, and  $A,B$ to be disjoint. In fact, if $A$ is separable from $B$ by a $\close\times\open$ set, then, for each $y\in\Pi_1[A]$, take $V$ as the open side that witnesses the separation. 

\section{Separation by a $\open\times\open$ set}\label{sec3}

We we first provide our $\mathcal{C}$ for the problem of separability by a $\open\times\open$ set, since this contrasts a lot with the other results. In particular, this is the only case where our antichain basis has two elements. 

\begin{defi} A \textbf{left-branching scheme} of a zero-dimensional space $\bbX$ is a family of non-empty clopen subsets $\mathcal{F}:=\{F_{m,\varepsilon}| m\in\omega, \varepsilon\in 2\}$ such that
\begin{enumerate}
	\item $\{F_{0,\varepsilon}| \varepsilon\in 2\}$ is a partition of $\bbX$,
	\item $\{F_{m+1,\varepsilon}| \varepsilon\in 2\}$ is a partition of $F_{m,0}$.
\end{enumerate}
We say that it \textbf{converges} if $\mbox{diam}(F_{m,0})\to 0$ as $m\to\infty$.
\end{defi}

Note that if a left-branching scheme converges then $\cap_{m\in\omega} F_{m,0}$ is a singleton. 

In \cite{LZ12}, the authors define, for $\xi<\omega_1$, a \textbf{$\xi$-disjoint family} as a family of sets which are $\mathbf\Pi^0_\xi$ and pairwise disjoint, where by convention $\mathbf\Pi^0_0=\mathbf\Delta^0_1$.

Given a zero-dimensional Polish space $\bbX$, consider a $0$-disjoint partition $\{F_n\}$. In each $F_n$, consider a left-branching scheme $\{F^n_{m,\varepsilon}|m\in\omega, \varepsilon\in 2\}$. Finally, for each $F^0_{m,1}$, consider a $0$-disjoint partition $\{C^{m}_l| l\in\omega\}$. Define the following sets:

\[\bbA:=\Delta(\bigcup_{n\in\omega}\bigcap_{m\in\omega} F^n_{m,0}),\]

\[ \bbB_0:= \bigcup_{n,m\in\omega} \big( C^n_{m}\times F^{n+1}_{m,1}\big),\]
\[ \bbB_1:= \bigcup_{n,m\in\omega} \big( F^{n+1}_{m,1}\times C^n_m\big).\]

\begin{thm}\label{Op}  Let $X,Y$ be Polish spaces, and $A,B$ be disjoint subsets of $X\times Y$. At least one of the following holds:
\begin{enumerate}
	\item[i)] $A$ is separable from $B$ by an open rectangle,
	\item[ii)] there is $i\in 2$ such that $(\bbX,\bbX,\bbA,\bbB_i)\leq (X,Y,A,B)$.
\end{enumerate}
If moreover, the left-branching scheme $\{F^n_{m,e}|m\in\omega, e\in 2\}$ converges for each $n\in\omega$, then exactly one of the previous holds.
\end{thm}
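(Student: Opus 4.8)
The plan is to prove the dichotomy by using the topological characterization of Lemma \ref{L3}(1) (with $\xi=1$) together with a careful case analysis, and then to handle the ``exactly one'' part separately via a complexity computation on the canonical example.

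\textbf{Setting up the two cases.} By Lemma \ref{L3}(1), the failure of (i) means that there is $y_0 \in \Pi_1[A]$ such that for \emph{every} open neighborhood $V$ of $y_0$ we have $(\overline{\Pi_0[A]}\times V)\cap B\neq\emptyset$; \emph{or}, by the symmetric version (exchanging the roles of the coordinates, which holds since $\open\times\open$ is symmetric under transposition), there is $x_0\in\Pi_0[A]$ such that for every open neighborhood $U$ of $x_0$, $(U\times\overline{\Pi_1[A]})\cap B\neq\emptyset$. These two cases will produce reductions into $(\bbX,\bbX,\bbA,\bbB_0)$ and $(\bbX,\bbX,\bbA,\bbB_1)$ respectively, by symmetry, so it suffices to treat one of them, say the first. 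So fix such a $y_0$, and fix a decreasing neighborhood basis $(V_k)_{k\in\omega}$ of $y_0$ in $Y$; for each $k$ pick $(x_k^*, y_k) \in (\overline{\Pi_0[A]}\times V_k)\cap B$, and then since $x_k^*\in\overline{\Pi_0[A]}$ we may also pick a genuine point of $\Pi_0[A]$ (together with a witness in $Y$ that it lies in $\Pi_0[A]$) arbitrarily close to $x_k^*$.

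\textbf{Building the reduction.} The idea is to send the left-branching scheme into $X$ (first coordinate) and into $Y$ (second coordinate) so that the ``main branch'' $\bigcap_m F^n_{m,0}$ converges to a point of $\Pi_0[A]$ (resp.\ its $Y$-witness), while the ``side sets'' $F^{n+1}_{m,1}$ get mapped into neighborhoods of the relevant points $x_k^*$ (resp.\ $y_k$) witnessing that $B$ is hit. Concretely, I would define $f:\bbX\to X$ and $g:\bbX\to Y$ recursively along the scheme. On the block $F_n$, the left-branching scheme $\{F^n_{m,\varepsilon}\}$ has its main branch $\bigcap_m F^n_{m,0}$ mapped (by $f$) to a chosen point $a_n\in\Pi_0[A]$ and (by $g$) to the corresponding witness $b_n\in\Pi_1[A]$, so that $(a_n,b_n)\in A$; this forces $\bbA=\Delta(\bigcup_n\bigcap_m F^n_{m,0})$ into $A$ after composing with the diagonal — more precisely one uses $f$ on the first coordinate and $g$ on the second, checking $\Delta(z)=(z,z)$ is sent to $(f(z),g(z))$ and arranging $f$ and $g$ to agree in the limit with the components of a point of $A$. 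For $\bbB_0=\bigcup_{n,m}(C^n_m\times F^{n+1}_{m,1})$: the sets $C^n_m$ (for fixed $n$, partitioning $F^0_{m,1}$... here one must track the indices carefully) should be mapped by $f$ into small neighborhoods of some $x^*_{k}\in\overline{\Pi_0[A]}$, close enough that a nearby genuine point of $\Pi_0[A]$ is captured; and $F^{n+1}_{m,1}$ should be mapped by $g$ into $V_k$ for suitable $k$, so that $(C^n_m\times F^{n+1}_{m,1})$ lands inside $(\text{nbhd of }x_k^*)\times V_k$, which meets $B$. The continuity of $f$ and $g$ comes from the convergence is not needed here — the scheme need only be a genuine tree-like family with $F_{m,0}\supseteq F_{m+1,\varepsilon}$ and clopen pieces — so $f,g$ are defined by specifying values on an exhaustive clopen partition at each finite level and taking limits along the main branches; continuity at main-branch points is where one wants the target points $a_n$ etc.\ to be approached, which one arranges by shrinking the image neighborhoods as $m$ grows.

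\textbf{The ``exactly one'' part.} Here I would show that if every left-branching scheme $\{F^n_{m,e}\}$ converges, then $\bbA$ is \emph{not} separable from $\bbB_i$ by an open rectangle, for $i=0,1$; by symmetry take $i=0$. Suppose $U\times W$ were an open rectangle with $\bbA\subseteq U\times W$ and $(U\times W)\cap\bbB_0=\emptyset$. Since the scheme converges, $\bigcap_m F^n_{m,0}=\{z_n\}$ is a singleton and $z_n$ lies in $\Pi_0[\bbA]\cap\Pi_1[\bbA]$, so $z_n\in U\cap W$ for every $n$. Fix $n$: since $z_{n+1}\in U$ and $z_{n+1}\in\bigcap_m F^{n+1}_{m,0}$, and $F^{n+1}_{m,1}$ shrinks to... actually $F^{n+1}_{m,1}$ are the complementary pieces so $F^{n+1}_{m,1}\subseteq F^{n+1}_{m,0}$... wait, $F^{n+1}_{m,1}$ is part of the partition of $F^{n+1}_{m-1,0}$, so $F^{n+1}_{m,1}\to z_{n+1}$ in the sense that any neighborhood of $z_{n+1}$ contains $F^{n+1}_{m,1}$ for large $m$. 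Hence $F^{n+1}_{m,1}\subseteq U$ for $m$ large. Similarly, on the other side, $C^n_m$ partitions $F^0_{m,1}$ which shrinks to $z_0$... one must match up the indices so that the $z_0$ appearing is in $W$, giving $C^n_m\subseteq W$ for large $m$. Then for such large $m$, $(C^n_m\times F^{n+1}_{m,1})\subseteq U\times W$ and also $\subseteq\bbB_0$, contradicting disjointness. Combined with the dichotomy (i)/(ii) and the trivial observation that (i) and (ii) cannot both hold (a reduction would pull back the separating rectangle, contradicting non-separability of the example), this gives ``exactly one''.

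\textbf{Main obstacle.} The delicate point is the index bookkeeping in the construction of $f$ and $g$ — one has to interleave the two ``directions'' of the left-branching schemes (the $n$-indexed blocks and the $m$-indexed depth) with the enumeration $(V_k)$ of neighborhoods of $y_0$ and the chosen approximating points in $\overline{\Pi_0[A]}$, so that simultaneously: the main branches converge to points of $A$ (for the $\bbA$ condition), the product blocks $C^n_m\times F^{n+1}_{m,1}$ land in sets of the form $(\text{small ball})\times V_k$ hitting $B$ (for the $\bbB_0$ condition), and both $f,g$ are continuous. Getting a single recursion that respects all three constraints, and verifying that the genuine points of $\Pi_0[A]$ can be chosen close enough to the $x_k^*\in\overline{\Pi_0[A]}$ to keep the relevant $B$-point inside the image, is the technical heart of the argument.
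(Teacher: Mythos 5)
Your first step is where the argument breaks. You invoke Lemma \ref{L3}(1) to translate the failure of (i), but Lemma \ref{L3}(1) characterizes separability by a $\close\times\open$ (i.e.\ $\mathbf\Pi^0_1\times\mathbf\Sigma^0_1$) rectangle, not by an open rectangle, and moreover it requires $A,B$ to be $\Sigma^1_1$ in recursively presented spaces, whereas Theorem \ref{Op} assumes nothing at all about the complexity of $A$ and $B$. The condition you extract (``there is $y_0$ such that for every open $V\ni y_0$, $(\overline{\Pi_0[A]}\times V)\cap B\neq\emptyset$'', or its transpose) is simply not implied by the failure of open-rectangle separability: take $X=Y=\mathbb{R}$, $A=\{(0,0)\}$, $B=\{(1/n,1/n)\mid n\geq 1\}$. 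No open rectangle separates $A$ from $B$, yet $\{0\}\times\mathbb{R}$ and $\mathbb{R}\times\{0\}$ both do, so both of your cases fail and your construction never gets off the ground, even though alternative (ii) does hold here. The correct tool is Lemma \ref{L2} with $\xi=\xi'=1$ (which is stated for arbitrary disjoint sets and, in the open case, for arbitrary Polish spaces): there is $x\in\Pi_0[A]$ such that for every open $U\ni x$ there is $y=y_U\in\Pi_1[A]$, \emph{depending on} $U$, such that every open $V\ni y_U$ meets $B$ inside $U\times V$ (or the symmetric statement). That dependence of $y$ on $U$ is exactly what the doubly indexed shape of $\bbB_0=\bigcup_{n,m}(C^n_m\times F^{n+1}_{m,1})$ is designed to encode: one picks $(x_n,y_n)\in A$ and $(x^n_m,y^n_m)\in B$ with $x^n_{m_n}\to x$ for all choices of $m_n$ and $y^n_m\to y_n$ as $m\to\infty$, and the reduction sends $\cap_l F^0_{l,0}$ to $x$, $C^n_m$ to $x^n_m$, $\cap_l F^{n+1}_{l,0}$ to $x_n$ resp.\ $y_n$, and $F^{n+1}_{m,1}$ to $y^n_m$. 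Your single-$y_0$ condition is in fact the one the paper exploits in the next section, for the $\close\times\open$ problem, with a genuinely different minimal pair; a reduction built from it would naturally target that example rather than $(\bbA,\bbB_0)$.

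Two smaller points. First, your plan of mapping into points $x^*_k\in\overline{\Pi_0[A]}$ and then ``nearby genuine points of $\Pi_0[A]$'' does not by itself keep the relevant $B$-points inside the image rectangle; the paper avoids this by extracting honest sequences in $A$ and in $B$ directly from the nested quantifier condition of Lemma \ref{L2}, with no closure points involved. Second, your non-separability argument for the ``exactly one'' part is in the right spirit (use convergence to get $F^0_{l+1,0}\subseteq U$ and $F^{l+2}_{m,0}\subseteq V$, then locate a block $C^{l+1}_{m+1}\times F^{l+2}_{m+1,1}$ of $\bbB_0$ inside $U\times V$), but as written the index bookkeeping you flag as unresolved is precisely what needs to be pinned down: the sets $C^n_m$ live inside $F^0_{n,1}$, so it is the first factor, not the second, that is controlled by the scheme in $F_0$.
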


\begin{proof} Suppose that $A$ is not separable from $B$ by an open rectangle. Then, 1. or 2. of Lemma \ref{L2} must hold. Suppose that 1. holds, we will show that $(\bbX,\bbX,\bbA,\bbB_0)$ reduces to $(X,Y,A,B)$. We note that the same argument will give us the reduction for $i=1$ when 2. holds. 

Find $x\in\Pi_0[A]$ which satisfies 1. and let $y\in Y$ such that $(x,y)\in A$. It is easy to see that, for each $n,m\in\omega$, there are $(x_n,y_n)\in A$ and $(x^n_m,y^n_m)\in B$ such that,
\begin{enumerate}
	\item[(a)] for all sequences $(m_n)_{n\in\omega}$, $x^n_{(m_n)}\to x$ as $n\to\infty$,
	\item[(b)] $y^n_m\to y_n$ as $m\to\infty$.
\end{enumerate}

We define $f:\bbX\to X$
\[f(\alpha):=\left\{\begin{array}{ll}
           x_n & \mbox{if } \alpha\in F_{n+1},\\
           x^n_m & \mbox{if } \alpha\in C^n_m\\
           x & \mbox{if } \alpha\in \cap_{l\in\omega} F^0_{l,0}.
          \end{array} \right. 
\]

Similarly, define $g:\bbX\to Y$ by:

\[g(\alpha):=\left\{\begin{array}{ll}
					 y_n & \mbox{if } \alpha\in\cap_{l\in\omega} F^{n+1}_{l,0}\\
           y^n_m & \mbox{if } \alpha\in F^{n+1}_{m,1}\\
           y & \mbox{if } \alpha\in F_0.
          \end{array} \right. 
\]

It is easy to see that these maps are well defined. 

We first show the continuity of $f$. Notice that we only need to check the continuity in $\cap_{l\in\omega} F^0_{l,0}$, since each other part is clopen. So suppose that $\alpha\in\cap_{l\in\omega} F^0_{l,0}$ and $\alpha_k\to\alpha$. For each $l$ there is $K$ such that $\alpha_k\in F^0_{l,0}$ for $k>K$.  We can suppose that $\alpha_k\notin\cap_{l\in\omega} F^0_{l,0}$, since otherwise $f(\alpha_k)=x$. Then $f(\alpha_k)=x^{l_k}_{m_k}$, for some $l_k$ greater than $l$. 

We claim that $l_k$ diverges. Indeed, $l_k\geq l$ if $\alpha_k\in F^0_{l,m}$, and for each $l$ this holds for $k$ big enough. Then, $x^{l_k}_{m_k}\to x$, as required. To check continuity for $g$, we only need to verify it in $\cap_{l\in\omega} F^{n+1}_{l,0}$, and this is done similarly. Checking that $f\times g$ is a reduction is routine.

Suppose that each scheme converges. We will show that $\bbA$ is not separable from $\bbB_0$. So let $U, V$ be open subsets such that $\bbA\subseteq U\times V$.  Note that $\cap F^0_{l,0}=\{x\}$, and $\{F^0_{l,0}\}$ is a basis at $x$ made of clopen neighborhoods. Then, there is an $l$ such that $F^0_{l+1,0}\subseteq U$. Note that $\cap_{m\in\omega} F^{l+2}_{m,0}=\{y_l\}\subseteq V$, so there is an $m$ such that $F^{l+2}_{m,0}\subseteq V$.  Then 
$C^{l+1}_{m+1}\times F^{l+2}_{m+1,1}\subseteq \bbB_0\cap \big(F^0_{l,0}\times F^{l+2}_{m,0}\big)\subseteq \bbB_0\cap \big(U\times V\big)$. 
Thus, $\bbA$ is not separable from $\bbB$ by an open rectangle. 

This shows that at most 1. or 2. must hold, as in the proof of Proposition \ref{Borel}.
\end{proof}

One can obtain an example satisfying our conditions in $\baire$ by setting $F_n=N_{(n)}$,
\[F^n_{m,\varepsilon}=\left\{\begin{array}{ll}
					 N_{n^{m+2}} & \mbox{if } \varepsilon=0,\\
           \bigcup_{k\neq n}N_{n^{m+1}k} & \mbox{if } \varepsilon=1.
          \end{array}\right.\]
and $C^m_l=N_{0^{m+1}(l+1)}$. It is routine to show they satisfy the hypothesis in the construction of $\bbA$ and $\bbB$, and that that each scheme is converging. 					
					
We can in fact shrink a bit our minimal examples, as some easy verification can show they stay non separable.

\[\mathbb{A}:=\{(n^\infty,n^\infty)| n\in \omega\},\]
\[ \mathbb{B}_0:= \{(0^{n+1}{(m+1)}^\infty,{(n+1)}^{m+1}0^\infty)| n,m\in \omega\},\]
\[ \mathbb{B}_1:= \{({(n+1)}^{m+1}0^\infty,0^{n+1}{(m+1)}^\infty)| n,m\in \omega\},\]

We would also note that this is the best we can do.

\begin{Pro} $(\baire,\baire,\bbA,\bbB_0)$ and $(\baire,\baire,\bbA,\bbB_1)$ are $\leq$-incomparable. 

\end{Pro}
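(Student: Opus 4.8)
The plan is to exploit an asymmetry between the two coordinates of $\bbB_0$ in the way they accumulate onto $\bbA$. The first coordinates $0^{n+1}(m+1)^\infty$ of the points of $\bbB_0$ all lie in the clopen set $N_{0^{n+1}}$, hence converge, as $n\to\infty$ and uniformly in $m$, to the single point $0^\infty$, which is the first coordinate of $(0^\infty,0^\infty)\in\bbA$; whereas for each fixed $n$ the second coordinates $(n+1)^{m+1}0^\infty$ converge, as $m\to\infty$, to $(n+1)^\infty$, i.e. to infinitely many distinct points of $\bbA$ as $n$ varies. In $\bbB_1$ these two roles are exchanged, and this is what makes the two quadruples incomparable.

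I would first reduce the statement to a single non-reduction. Writing $\sigma(S):=\{(v,u):(u,v)\in S\}$, one has $\sigma(\bbA)=\bbA$, $\sigma(\bbB_0)=\bbB_1$ and $\sigma(\bbB_1)=\bbB_0$; so if $(f,g)$ witnesses $(\baire,\baire,\bbA,\bbB_0)\leq(\baire,\baire,\bbA,\bbB_1)$, then $(g,f)$ witnesses $(\baire,\baire,\bbA,\bbB_1)\leq(\baire,\baire,\bbA,\bbB_0)$, and symmetrically. Hence the two reductions stand or fall together, and it is enough to prove $(\baire,\baire,\bbA,\bbB_0)\not\leq(\baire,\baire,\bbA,\bbB_1)$.

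So suppose, towards a contradiction, that continuous $f,g:\baire\to\baire$ satisfy $\bbA\subseteq(f\times g)^{-1}(\bbA)$ and $\bbB_0\subseteq(f\times g)^{-1}(\bbB_1)$. Since every point of $\bbA$ has equal coordinates, $(k^\infty,k^\infty)\in\bbA$ forces $f(k^\infty)=g(k^\infty)=j^\infty$ for some $j$; in particular $f(0^\infty)=j^\infty$ for some $j$, and $g((n+1)^\infty)=j_n^\infty$ for some $j_n$. From $\bbB_0\subseteq(f\times g)^{-1}(\bbB_1)$, for each $n,m$ there are $k=k(n,m)$ and $l=l(n,m)$ with $f(0^{n+1}(m+1)^\infty)=(k+1)^{l+1}0^\infty$ and $g((n+1)^{m+1}0^\infty)=0^{k+1}(l+1)^\infty$. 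Fix $n$ and let $m\to\infty$: since $(n+1)^{m+1}0^\infty\to(n+1)^\infty$, continuity of $g$ gives $0^{k(n,m)+1}(l(n,m)+1)^\infty\to j_n^\infty$; comparing the $0$-th coordinate forces $j_n=0$, and then, since the left-hand side must eventually enter every $N_{0^s}$, it forces $k(n,m)\to\infty$ as $m\to\infty$. On the other hand $0^{n+1}(m+1)^\infty\to 0^\infty$ as $n\to\infty$, uniformly in $m$, so by continuity of $f$ at $0^\infty$ there is $N$ such that for all $n\geq N$ and all $m$ the point $f(0^{n+1}(m+1)^\infty)=(k(n,m)+1)^{l(n,m)+1}0^\infty$ agrees with $j^\infty$ in coordinate $0$. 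If $j=0$ this is absurd, since that coordinate equals $k(n,m)+1\geq 1$; if $j\geq 1$ it gives $k(n,m)=j-1$ for all $m$ once $n\geq N$, contradicting $k(N,m)\to\infty$. Either way we reach a contradiction, so no such $f,g$ exist.

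The one delicate point is the interplay of the two limits: the convergence $0^{n+1}(m+1)^\infty\to 0^\infty$ must be used in its uniform-in-$m$ form, so that a single threshold $N$ works simultaneously for every $m$, and only then may one freeze $n=N$ and run the vertical limit $m\to\infty$ that yields $k(N,m)\to\infty$. Everything else — the elementary behaviour of $\sigma$, the identification of the $f,g$-images of constant sequences, and reading off $k(n,m),l(n,m)$ from membership in $\bbB_1$ — is routine bookkeeping.
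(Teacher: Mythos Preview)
Your proof is correct and follows essentially the same approach as the paper's: both use the uniform-in-$m$ convergence $0^{n+1}(m+1)^\infty\to 0^\infty$ on the $f$-side to pin down $k(n,m)$, and the convergence $(n+1)^{m+1}0^\infty\to(n+1)^\infty$ on the $g$-side to force a contradiction. The only cosmetic differences are that you make the symmetry reduction explicit (the paper simply writes ``the other side being similar''), you treat the case $j=0$ separately, and you run the two limits in the opposite order, but the underlying mechanism is identical.
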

\begin{proof}
  Suppose that $(\baire,\baire,\bbA,\bbB_0)\leq(\baire,\baire,\bbA,\bbB_1)$, the other side being similar. Then, for any $n,m$ there are $k_{n,m}$ and $l_{n,m}$ such that $f(0^{n+1}{(m+1)}^\infty)=(k_{n,m}+1)^{l_{n,m}+1}0^\infty$, and $g({(n+1)}^{m+1}0^\infty)=0^{k_{n,m}+1}(l_{n,m}+1)^\infty$.
	
As $f(0^{n+1}{(m+1)}^\infty)\to f(0^\infty)$, when $n\to\infty$, $f(0^\infty)=(K+1)^\infty$ for some $K\in\omega$. Then, there is $N\in\omega$, such that for $n\geq N$ and any $m\in\omega$, $f(0^n(m+1)^\infty)\in N_{K+1}$, by continuity. However $0^{K+1}(l_{n,m}+1)^\infty=g({(n+1)}^{m+1}0^\infty)$, but as $m\to\infty$, the left side cannot converge to something of the form $(K')^\infty$ and the right side converges to something of this form. 
\end{proof}

\section{Separation by a $\close\times\open$ set}

This case has some things in common with the open case. In particular, the construction uses the same families of sets. Let $\{F_n\}$ be a $0$-disjoint partition of $\bbX$, and, for each $n$, $\{F^n_{m,\varepsilon}\}$ be a left-branching scheme. Instead of the $0$-disjoint partitions of $F^0_{m,1}$, we consider for each $n\in \omega$, a $0$-disjoint partition $\{D^n_m|m\in\omega\}$ of $F_{n+1}$. We now define the following sets:

\[\bbA:=\big((\cap_{l\in\omega} F^0_{l,0})\times (\cap_{l\in\omega}  F^0_{l,0})\big)\bigcup \big(\cup_{m,n\in\omega}(F^{n+1}_{m,1}\times D^{n}_m)\big),\]

\[\bbB:= \bigcup_{n\in\omega}\big((\cap_{l\in\omega} F^{n+1}_{l,0}) \times F^0_{n,1} \big).\]

\begin{Pro} Let $X,Y$ be Polish spaces, and $A,B$ be disjoint subsets of $X\times Y$. At least one of the following holds:
\begin{enumerate}
	\item $A$ is separable from $B$ by a $\close\times\open$ set,
	\item $(\bbX,\bbX,\bbA,\bbB)\leq (X,Y,A,B)$.
\end{enumerate}
If moreover each scheme converges, then exactly of the previous holds.
\end{Pro}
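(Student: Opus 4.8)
<br>

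The plan is to follow the template already established in the proof of Theorem~\ref{Op}, adapting it to the $\close\times\open$ setting by invoking Lemma~\ref{L3}(1) in place of Lemma~\ref{L2}. Concretely, suppose $A$ is not separable from $B$ by a $\close\times\open$ set. By the equivalence $(a)\Leftrightarrow(c)$ in Lemma~\ref{L3}(\ref{l1}), there is some $y_\infty\in\Pi_1[A]$ such that, for \emph{every} open neighborhood $V$ of $y_\infty$, we have $(\overline{\Pi_0[A]}\times V)\cap B\neq\emptyset$. Pick $x_\infty$ with $(x_\infty,y_\infty)\in A$. Unwinding the condition: for each basic clopen $V\supseteq\{y_\infty\}$ we get a point of $B$ with first coordinate in $\overline{\Pi_0[A]}$, hence approximable by points of $\Pi_0[A]$, which are themselves first coordinates of points of $A$. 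This is exactly the data needed to feed the maps $f,g$.

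The key step is then to extract, by a diagonal argument against the two nested countable families of clopen sets $\{F^{n+1}_{l,0}\}_l$ (neighborhoods shrinking to the anchor on the $\close$-side) and a basis at $y_\infty$ on the $\open$-side, the following points: for each $n$, a point $(u_n, v_n)\in B$ with $v_n\to y_\infty$ and $u_n\in\overline{\Pi_0[A]}$; and for each $n,m$, a point $(a^n_m, b^n_m)\in A$ with $a^n_m\to u_n$ as $m\to\infty$ (this is where $u_n\in\overline{\Pi_0[A]}$ is used, together with the fact that membership in $\Pi_0[A]$ is witnessed by a second coordinate). One then defines $f,g:\bbX\to X,Y$ exactly as in Theorem~\ref{Op}: $f$ sends $\cap_l F^{n+1}_{l,0}$ to $u_n$, sends $F^{n+1}_{m,1}$ to $a^n_m$, and the central point $\cap_l F^0_{l,0}$ to $x_\infty$; while $g$ sends $D^n_m$ to $b^n_m$, sends $\cap_l F^{n+1}_{l,0}$'s complement pieces appropriately, and sends $F^0_{n,1}$ (the part on which $\bbB$ lives on the second coordinate) to $v_n$, with the central diagonal point going to $y_\infty$. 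Continuity is checked only at the limit points $\cap_l F^{n+1}_{l,0}$ and $\cap_l F^0_{l,0}$, using that the relevant indices diverge along any convergent sequence — verbatim the argument in Theorem~\ref{Op}. That $f\times g$ carries $\bbA$ into $A$ and $\bbB$ into $B$ is then a routine case check against the three/two pieces of each set.

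For the ``exactly one'' clause when each scheme converges, I would show directly that $\bbA$ is not separable from $\bbB$ by a $\close\times\open$ rectangle. So let $F\times V$ be such a rectangle with $\bbA\subseteq F\times V$, $F$ closed, $V$ open. Since $(\cap_l F^0_{l,0})\times(\cap_l F^0_{l,0})\subseteq\bbA$, the point $y_0:=\cap_l F^0_{l,0}$ lies in $V$, so some $F^0_{n+1,0}\subseteq V$ by convergence of the $0$-th scheme. On the $\close$-side, $\cap_l F^{n+1}_{l,0}$ is a single point which is a limit of points $F^{n+1}_{m,1}$ all of whose $\bbA$-partners $D^n_m$ are nonempty; since $F$ is closed and contains all $F^{n+1}_{m,1}$, it contains $\cap_l F^{n+1}_{l,0}$. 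But $(\cap_l F^{n+1}_{l,0})\times F^0_{n,1}\subseteq\bbB$, and $F^0_{n,1}\subseteq F^0_{n+1,0}^{\,c}$... here I must be slightly careful: I want a point of $\bbB$ inside $F\times V$. The point is that $F^0_{n,1}$ need not be inside $V$, but only a \emph{single} point of $\bbB$ with first coordinate $\cap_l F^{n+1}_{l,0}\in F$ and second coordinate in $F^0_{n,1}\cap V$ is needed; choosing the index $n$ via the $0$-th scheme already guarantees $\cap_l F^{n+1}_{l,0}$ is a limit point forcing it into the closed set $F$, and one picks $V$'s basic piece at the \emph{corresponding} level — this indexing bookkeeping is the one spot requiring care, and is the main obstacle. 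Once the indices are aligned, $(\cap_l F^{n+1}_{l,0}, \text{pt of } F^0_{n,1})\in\bbB\cap(F\times V)$, contradiction; and the ``at most one'' part follows as in Proposition~\ref{Borel}, since a reduction would pull back a separating $\close\times\open$ rectangle to one for $(\bbA,\bbB)$.
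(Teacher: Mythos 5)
Your proposal follows the paper's argument essentially verbatim: Lemma \ref{L3}(1) with $\xi=1$ yields the anchor $(x_\infty,y_\infty)\in A$, the points $(u_n,v_n)\in B$ and $(a^n_m,b^n_m)\in A$, and your $f,g$ are the paper's maps (just define $f$ to be constantly $x_\infty$ on all of $F_0$, not only at $\cap_l F^0_{l,0}$, and note $g$ on $F_{n+1}$ is already fully determined by the pieces $D^n_m$), with continuity checked exactly as in Theorem \ref{Op}. The indexing issue you flag in the non-separability argument is not a real obstacle: since the closed set $F\supseteq\Pi_0[\bbA]$ contains the point $\cap_l F^{m+1}_{l,0}$ for \emph{every} $m$ (each being a limit of the nonempty sets $F^{m+1}_{j,1}$), once $F^0_{k,0}\subseteq V$ you may take any $m\geq k+1$, so that $F^0_{m,1}\subseteq F^0_{m-1,0}\subseteq V$ and $\big(\cap_l F^{m+1}_{l,0}\big)\times F^0_{m,1}$ meets $F\times V$ inside $\bbB$; the paper reaches the same conclusion by observing that condition (c) of Lemma \ref{L3} fails at the central point, which is the same computation.
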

\begin{proof} Suppose that $A$ is not separable from $B$ by a $\close\times\open$ set. 

By Lemma \ref{L3}, there is $y\in\Pi_1[A]$ such that for each open neighborhood $V$ of $y$, 
$(\overline{\Pi_0[A]}\times V)\cap B\neq\emptyset$. Let $x\in X$ with $(x,y)\in A$.

Let $(V_n)$ be a decreasing neighborhood basis at $y$. For each $n\in\omega$, choose 
$(x_n,y_n)\in(\overline{\Pi_0[A]}\times V_n)\cap B$.

We note that $y_n\to y$ as $n\to\infty$. Also, since $x_n\in\overline{\Pi_0[A]}$, there is a sequence $(x^n_m,y^n_m)\in A$, such that $x^n_m\to x_n$ as $m\to\infty$.

Define the following functions:
\[\begin{array}{ll}
f(\alpha)&=\left\{\begin{array}{ll}
	x & \mbox{if } \alpha\in F_0,\\
	x_n & \mbox{if } \alpha\in \cap_{l\in\omega} F^{n+1}_{l,0},\\
	x^n_m & \mbox{if } \alpha\in  F^{n+1}_{m,1}.
									\end{array}\right.\\
									& \\
g(\alpha)&=\left\{\begin{array}{ll}
	y & \mbox{if } \alpha\in \cap_{l\in\omega} F^0_{l,0},\\
	y_n & \mbox{if } \alpha\in F^0_{n,1},\\
	y^n_m & \mbox{if } \alpha\in D^{n}_m.
\end{array}\right.
\end{array}\]

The proof that this is in fact a well defined reduction is the same as that of Theorem \ref{Op}. By Lemma \ref{L3}, if the scheme converges, $\bbA$ is not separable from $\bbB$ by a $\close\times\open$ set.
\end{proof}

Using the same families as in the open case, one can obtain a more concrete example. Choose also, $D^{n}_m=N_{(n+1)m}$. We shrink the examples as in the previous case, to get the following sets, which define a $\leq$ minimal example for non separation by $\close\times\open$ set.

\[\bbA:=\{(0^\infty,0^\infty)\}\cup\{\big((n+1)^{m+1}0^\infty,(n+1)m^\infty\big)|n,m\in\omega\},\]
and 
\[\bbB:=\{\big((n+1)^\infty,0^{n+1}1^\infty\big)|n\in\omega\}.\]

\section{Separation by a $\mathbf\Pi^0_\xi\times\mathbf\Pi^0_\xi$ set}

Lecomte and Zeleny used Lemma \ref{LZ1} to prove Conjecture \ref{LZ2} in the cases $\xi=1,2$. As one can expect from the proof of Lemma \ref{lem1}, this conjecture provides a weaker dichotomy for the case of one $\mathbf\Pi^0_\xi$ rectangle. 

\begin{lem}\label{Lem1} If Conjecture \ref{LZ2} is true, then for every $0<\xi<\omega_1$, for every Polish spaces $X,Y$, and for every disjoint analytic  $A,B\subseteq X\times Y$, exactly one of the following holds:
\begin{enumerate}
	\item $A$ is separable from $B$ by a $\mathbf\Pi^0_\xi\times \mathbf\Pi^0_\xi$ set,
	\item there are continuous functions $f:\mathbb{X}_\xi\to X$ and $g:\mathbb{Y}_\xi\to Y$ such that
	\begin{align*}
	\Pi_0[\mathbb{A}_\xi]&\subseteq f^{-1}(\Pi_0[A]),\\
	\Pi_1[\mathbb{A}_\xi]&\subseteq g^{-1}(\Pi_1[A]),\\
	\mathbb{B}_\xi&\subseteq (f\times g)^{-1}(B).
	\end{align*}
\end{enumerate}
\end{lem}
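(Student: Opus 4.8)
The plan is to reduce the statement to a single application of Conjecture~\ref{LZ2} to the pair $(\Pi_0[A]\times\Pi_1[A],B)$, after first disposing of the case where this pair is not disjoint. The first step is to record the equivalence
\[A\text{ is separable from }B\text{ by a }\mathbf\Pi^0_\xi\times\mathbf\Pi^0_\xi\text{ set}\iff B\text{ is separable from }\Pi_0[A]\times\Pi_1[A]\text{ by a }(\mathbf\Sigma^0_\xi\times\mathbf\Sigma^0_\xi)_\sigma\text{ set,}\]
in which, whenever either side holds, one automatically has $(\Pi_0[A]\times\Pi_1[A])\cap B=\emptyset$. The implication $\Rightarrow$ is direct: if $A\subseteq C\times D$ with $C,D\in\mathbf\Pi^0_\xi$ and $(C\times D)\cap B=\emptyset$, then $\Pi_0[A]\times\Pi_1[A]\subseteq C\times D$ and $B\subseteq(\neg C\times Y)\cup(X\times\neg D)$, the latter being a union of two $\mathbf\Sigma^0_\xi$-rectangles (using $\xi\geq 1$, so that $X,Y\in\mathbf\Sigma^0_\xi$) disjoint from $\Pi_0[A]\times\Pi_1[A]$. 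For $\Leftarrow$ one relativizes Lemmas~\ref{LZ1} and~\ref{lem1} to a parameter $p\in\baire$ coding recursive presentations of $X,Y$, $\Sigma^1_1(p)$-codes for $A,B$, and an ordinal $<\omega^{CK,p}_1$ for $\xi$; then separability of $B$ from $\Pi_0[A]\times\Pi_1[A]$ by a $(\mathbf\Sigma^0_\xi\times\mathbf\Sigma^0_\xi)_\sigma$ set gives $B\cap\overline{\Pi_0[A]\times\Pi_1[A]}^{\tau_\xi\times\tau_\xi}=\emptyset$ by Lemma~\ref{LZ1}, hence $A$ is separable from $B$ by a $\mathbf\Pi^0_\xi\times\mathbf\Pi^0_\xi$ set by the implication $(4)\Rightarrow(3)$ of Lemma~\ref{lem1} (with $\xi'=\xi$).

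Next I would split into two cases. If $(\Pi_0[A]\times\Pi_1[A])\cap B\neq\emptyset$, then by the displayed equivalence item~1 fails, and it remains to produce the reduction of item~2: fix $(x,y)\in(\Pi_0[A]\times\Pi_1[A])\cap B$ and let $f:\mathbb{X}_\xi\to X$, $g:\mathbb{Y}_\xi\to Y$ be the constant maps with values $x,y$; then $f^{-1}(\Pi_0[A])=\mathbb{X}_\xi$, $g^{-1}(\Pi_1[A])=\mathbb{Y}_\xi$ and $(f\times g)^{-1}(B)=\mathbb{X}_\xi\times\mathbb{Y}_\xi$, so all three inclusions of item~2 hold trivially. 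If instead $(\Pi_0[A]\times\Pi_1[A])\cap B=\emptyset$, then $\Pi_0[A]\times\Pi_1[A]$ and $B$ are disjoint analytic subsets of $X\times Y$, and I apply Conjecture~\ref{LZ2} to the quadruple $(X,Y,\Pi_0[A]\times\Pi_1[A],B)$. Its alternative~1, namely ``$B$ is separable from $\Pi_0[A]\times\Pi_1[A]$ by a $(\mathbf\Sigma^0_\xi\times\mathbf\Sigma^0_\xi)_\sigma$ set'', is by the displayed equivalence exactly item~1 of the present lemma. Its alternative~2, namely $(\mathbb{X}_\xi,\mathbb{Y}_\xi,\mathbb{B}_\xi,\mathbb{A}_\xi)\leq(X,Y,B,\Pi_0[A]\times\Pi_1[A])$, says there are continuous $f,g$ with $(f\times g)(\mathbb{B}_\xi)\subseteq B$ and $(f\times g)(\mathbb{A}_\xi)\subseteq\Pi_0[A]\times\Pi_1[A]$; and the latter inclusion unravels precisely to $\Pi_0[\mathbb{A}_\xi]\subseteq f^{-1}(\Pi_0[A])$ and $\Pi_1[\mathbb{A}_\xi]\subseteq g^{-1}(\Pi_1[A])$, so alternative~2 is equivalent to item~2 (the reverse translation uses only that $(f\times g)(\mathbb{A}_\xi)\subseteq\Pi_0[A]\times\Pi_1[A]$ follows from the two projection inclusions). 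For the ``exactly one'' clause: in the first case item~1 fails while item~2 holds; in the second case alternatives~1 and~2 are mutually exclusive by Conjecture~\ref{LZ2}, and the above translations identify them with items~1 and~2 respectively.

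The one genuine point to get right — and the main obstacle, though it is bookkeeping rather than a new idea — is precisely the displayed equivalence together with the disjointness issue: Conjecture~\ref{LZ2} is only available for disjoint analytic sets, so the case $(\Pi_0[A]\times\Pi_1[A])\cap B\neq\emptyset$ must be isolated and handled by hand, and one must verify that ``$(\mathbf\Sigma^0_\xi\times\mathbf\Sigma^0_\xi)_\sigma$-separation of $B$ from $\Pi_0[A]\times\Pi_1[A]$'' is genuinely equivalent to ``$\mathbf\Pi^0_\xi\times\mathbf\Pi^0_\xi$-separation of $A$ from $B$'' and not merely implied by it — which is where the relativized forms of Lemmas~\ref{LZ1} and~\ref{lem1} (and hence Louveau's topologies $\tau_\xi$) enter.
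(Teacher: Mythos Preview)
Your proof is correct and follows essentially the same route as the paper's: reduce to Conjecture~\ref{LZ2} applied to the pair $(\Pi_0[A]\times\Pi_1[A],B)$, using (relativized) Lemmas~\ref{LZ1} and~\ref{lem1} to translate between $\mathbf\Pi^0_\xi\times\mathbf\Pi^0_\xi$-separation of $A$ from $B$ and $(\mathbf\Sigma^0_\xi\times\mathbf\Sigma^0_\xi)_\sigma$-separation of $B$ from $\Pi_0[A]\times\Pi_1[A]$. The paper's proof is simply the terse sentence ``Lemmas~\ref{lem1}, \ref{LZ1}, and Conjecture~\ref{LZ2} give our functions,'' together with a direct pull-back argument for the ``exactly'' part; your case split on whether $(\Pi_0[A]\times\Pi_1[A])\cap B=\emptyset$ and the constant-map reduction in the nondisjoint case make explicit a point the paper leaves implicit (both Lemma~\ref{LZ1} and Conjecture~\ref{LZ2} are stated only for disjoint pairs), which is a welcome addition rather than a different approach.
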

\begin{proof} For the exactly part, we note that if $C\times D$ separates $A$ from $B$, with $C,D$ in $\mathbf\Pi^0_\xi$, then $f^{-1}[C]\times g^{-1}[D]$ separates $\Pi_0[\mathbb{A}_\xi]\times\Pi_1[\mathbb{A}_\xi]$ from $\bbB_\xi$, which cannot be the case, because of Conjecture \ref{LZ2}.
 
Now, by relativization, we can suppose that $X,Y$ are recursively presented Polish spaces, and that $A,B$ are $\Sigma^1_1$ subsets of $X\times Y$. Suppose that $A$ is not separable from $B$ by a $\mathbf\Pi^0_\xi\times \mathbf\Pi^0_\xi$ set. Lemmas \ref{lem1}, \ref{LZ1}, and Conjecture \ref{LZ2} give our functions. 
\end{proof}

We can improve this lemma by finding an actual $\leq$-minimum example for the cases $\xi=1,2$, which coincidentally are the cases where Conjecture \ref{LZ2} is proved.  In both cases, we need an additional hypothesis on $\mathbb{A}_\xi$, which can be easily fulfilled, as we will see below. First, we introduce some notation

\begin{nota} Let $X,Y$ be topological spaces. Then $X\oplus Y$ denote the topological sum of $X,Y$, i.e., the set $\{(\varepsilon,z)\in 2\times (X\cup Y)| (\varepsilon=0\land z\in X)\lor (\varepsilon=1\land z\in Y)\}$, with the smallest topology where each copy of $X$ and $Y$ is clopen. When there is no chance of confusion, we will denote by $\overline{z}=(\varepsilon,z)$. In particular, we will use variables $\alpha,\beta,\gamma,...$ for elements of a general topological space $X$, and variables $n,m,...$ for elements of $\omega$. 
\end{nota}

Let $\xi\in\{1,2\}$. Suppose that $(\bbX_\xi,\bbY_\xi,\bbA_\xi,\bbB_\xi)$ satisfy Conjecture \ref{LZ2}. Suppose that $\bbA_\xi$ has countable projections, so let $\{\alpha_n\}_{n\in\omega}$ and $\{\beta_n\}_{n\in\omega}$ be enumerations of the first and second projections respectively. We define 
\begin{align*}
\bbX'_\xi&:=\bbX_\xi\oplus\omega,\\
\bbY'_\xi&:=\bbY_\xi\oplus\omega, \\ 
\bbA'_\xi&:=\big\{(\overline{\alpha_n},\overline{n})\in\bbX'_\xi\times\bbY'_\xi| n\in\omega\}\cup\{(\overline{n},\overline{\beta_n})\in\bbX'_\xi\times\bbY'_\xi|n\in\omega\big\},\\
\bbB'_\xi&:=\big\{(\overline{\alpha},\overline{\beta})\in\bbX'_\xi\times\bbY'_\xi| (\alpha,\beta)\in \bbB_\xi\big\}.
\end{align*}
We obtain the following theorem.

\begin{thm} \label{Thm2} Let $\xi\in\{1,2\}$. Let $X,Y$ be Polish spaces, and $A,B\subseteq X\times Y$ be disjoint analytic subsets. Exactly one of the following holds:
\begin{enumerate}
	\item $A$ is separable from $B$ by a $\mathbf\Pi^0_\xi\times\mathbf\Pi^0_\xi$ set,
	\item $(\bbX'_\xi,\bbY'_\xi,\bbA'_\xi,\bbB'_\xi)\leq(X,Y,A,B)$.
\end{enumerate} 
\end{thm}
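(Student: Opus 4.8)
\textbf{Proof strategy for Theorem \ref{Thm2}.}

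The plan is to deduce the dichotomy from Lemma \ref{Lem1} (which is available since Conjecture \ref{LZ2} holds for $\xi\in\{1,2\}$) by upgrading the conclusion of that lemma — a pair of reductions plus a separate condition on the projections — into a single genuine $\leq$-reduction, at the cost of enlarging the spaces by the disjoint copies of $\omega$. First I would dispatch the ``exactly one'' part: if $C\times D$ with $C,D\in\mathbf\Pi^0_\xi$ separates $A$ from $B$, then pulling back along a hypothetical reduction $f\times g$ and noting that each $\overline{\alpha_n}$ has a preimage in $C$ (witnessed by $(\overline{\alpha_n},\overline n)\in\bbA'_\xi$) forces a $\mathbf\Pi^0_\xi\times\mathbf\Pi^0_\xi$ separation of $\Pi_0[\bbA_\xi]\times\Pi_1[\bbA_\xi]$ from $\bbB_\xi$ inside $\bbX_\xi\times\bbY_\xi$, contradicting Conjecture \ref{LZ2}; one has to check that the $\omega$-summands do not interfere, which is automatic since $\bbB'_\xi$ lives entirely in the $\bbX_\xi\times\bbY_\xi$ copy.

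For the main implication, assume $A$ is not separable from $B$ by a $\mathbf\Pi^0_\xi\times\mathbf\Pi^0_\xi$ set. By relativization reduce to the case where $X,Y$ are recursively presented and $A,B$ are $\Sigma^1_1$, and apply Lemma \ref{Lem1} to obtain continuous $f_0:\bbX_\xi\to X$, $g_0:\bbY_\xi\to Y$ with $\Pi_0[\bbA_\xi]\subseteq f_0^{-1}(\Pi_0[A])$, $\Pi_1[\bbA_\xi]\subseteq g_0^{-1}(\Pi_1[A])$ and $\bbB_\xi\subseteq(f_0\times g_0)^{-1}(B)$. Now for each $n$ pick witnesses: since $f_0(\alpha_n)\in\Pi_0[A]$ choose $v_n\in Y$ with $(f_0(\alpha_n),v_n)\in A$, and since $g_0(\beta_n)\in\Pi_1[A]$ choose $u_n\in X$ with $(u_n,g_0(\beta_n))\in A$. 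Then define $f:\bbX'_\xi\to X$ by $f\upharpoonright\bbX_\xi=f_0$ (on the first summand) and $f(\overline n)=u_n$, and $g:\bbY'_\xi\to Y$ by $g\upharpoonright\bbY_\xi=g_0$ and $g(\overline n)=v_n$. Continuity is immediate because each summand is clopen and $\omega$ carries the discrete topology. One then checks $f\times g$ is a reduction: a point of $\bbA'_\xi$ is either $(\overline{\alpha_n},\overline n)$, which maps to $(f_0(\alpha_n),v_n)\in A$, or $(\overline n,\overline{\beta_n})$, which maps to $(u_n,g_0(\beta_n))\in A$; and a point $(\overline\alpha,\overline\beta)\in\bbB'_\xi$ has $(\alpha,\beta)\in\bbB_\xi$, so $(f(\overline\alpha),g(\overline\beta))=(f_0(\alpha),g_0(\beta))\in B$.

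The only real subtlety — and the place where the extra hypothesis that $\bbA_\xi$ has countable projections is used — is exactly this passage from the ``projection-wise'' reduction of Lemma \ref{Lem1} to an honest reduction: Lemma \ref{Lem1} only guarantees that $f_0$ sends $\Pi_0[\bbA_\xi]$ into $\Pi_0[A]$, not that it sends $\bbA_\xi$ into $A$, so one genuinely needs the freedom to route the ``partner coordinates'' through the new discrete points $\overline n$, and for that the projections must be enumerable. I expect the main obstacle to be purely bookkeeping: verifying that the shrinking/enlarging does not spoil the lower-bound side, i.e.\ that $(\bbX'_\xi,\bbY'_\xi,\bbA'_\xi,\bbB'_\xi)$ is still not separable — but this follows formally from the ``exactly one'' argument above, so no independent non-separation proof is needed. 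Finally one remarks, as the paper promises, that the countable-projection hypothesis on $\bbA_\xi$ is harmless: the known examples $\bbA_1,\bbA_2$ can be taken with countable projections (or replaced by $\leq$-equivalent such examples), so the theorem applies unconditionally for $\xi\in\{1,2\}$.
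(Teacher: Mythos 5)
Your proposal is correct and follows essentially the same route as the paper: apply Lemma \ref{Lem1} to get the projection-wise reduction $f_0,g_0$, choose witnesses $u_n,v_n$ in $A$ for the countably many projection points, extend over the discrete $\omega$-summands, and obtain the ``exactly one'' part by pulling a hypothetical $\mathbf\Pi^0_\xi\times\mathbf\Pi^0_\xi$ separation back through the canonical embeddings to contradict Conjecture \ref{LZ2}. The only difference is cosmetic (you rederive the non-separability of $\bbA'_\xi$ from $\bbB'_\xi$ by composing with the reduction rather than stating it first), so no changes are needed.
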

\begin{proof} Applying the previous lemma to $\bbA'_\xi$ and $\bbB'_\xi$ we can see that $\bbA'_\xi$ is not separable from $\bbB'_\xi$ (using the canonical embeddings from $\bbX_\xi$ into $\bbX'_\xi$, and $\bbY_\xi$ into $\bbY'_\xi$), so the exactly part follows.

Suppose that $A$ is not separable from $B$ by a $\mathbf\Pi^0_\xi\times\mathbf\Pi^0_\xi$ set. Lemma \ref{Lem1} gives auxiliary continuous functions $f':\bbX_\xi\to X$ and $g':\bbY_\xi\to Y$. Now, $f'(\alpha_n)\in\Pi_0[A]$, so there is $y_n\in Y$ such that $(f'(\alpha_n),y_n)\in A$. Similarly, we can obtain $x_n\in X$, such that $(x_n,g'(\beta_n))\in A$. We then define $f:\bbX'_\xi \to X$, and $g:\bbY'_\xi \to Y$ by:

\begin{align*}
		f(\overline{\alpha})&=f'(\alpha)\\
		f(\overline{n})&=x_n
\end{align*}

and 
\begin{align*}
		g(\overline{\beta})&=g'(\beta)\\
		g(\overline{n})&=y_n
	\end{align*}
	
These are continuous maps, and $f\times g$ is a reduction, by Lemma \ref{Lem1} and the choice of $x_n$ and $y_n$.
\end{proof}

In order to be more concrete, we will like to give particular instances for these $\leq$-minimum examples. These can be obtained directly from \cite{LZ12}. We recall the general form of these examples, and then give a particular example that satisfy our condition on the projections of $\bbA_\xi$. 

\begin{Pro} \begin{enumerate}
	\item (Lecomte-Zeleny) Let $\bbX$ and $\bbY$ be $0$-dimensional Polish spaces, and let $\{C^0_i|i\in\omega\}$ and $\{C^1_i|i\in\omega\}$ be 0-disjoint families of $\bbX$ and $\bbY$ respectively.  Then if 
	$\bbB\subseteq\big(\bbX\backslash\big(\bigcup_{i\in \omega}C^0_i\big)\big)\times\big(\bbY\backslash\big(\bigcup_{i\in \omega}C^1_i\big)\big)$ 
	is not separable from $\bbA\subseteq\bigcup_{i\in\omega} C^0_i\times C^1_i$  by a $(\open\times\open)_\sigma$ set, then they satisfy Conjecture \ref{LZ2} for $\xi=1$.
	\item There are $\bbX$, $\bbY$, $\bbA$ and $\bbB$ which satisfy the hypothesis in 1.  such that $\bbA$ has countable projections.
\end{enumerate}
\end{Pro}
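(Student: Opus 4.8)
The plan is to construct a concrete instance of the Lecomte--Zeleny example satisfying the hypothesis of part 1 and then modify it slightly so that the first projection of $\bbA$ becomes countable, while preserving non-separability by a $(\open\times\open)_\sigma$ set. First I would recall the standard example: take $\bbX=\bbY=\cantor$ (or a suitable subspace of $\baire$), let $\{C^0_i\}$ and $\{C^1_i\}$ be the $0$-disjoint families given by basic clopen sets converging to a fixed point, say $C^0_i=C^1_i=\{\alpha\in\cantor\mid \alpha(j)=0 \text{ for } j<i,\ \alpha(i)=1\}$, and set $\bbA=\bigcup_{i}C^0_i\times C^1_i$ restricted to a diagonal-type set, with $\bbB$ concentrating on $\{0^\infty\}\times\{0^\infty\}$ in the way that witnesses non-separability by a countable union of open rectangles (this is exactly the pair used in \cite{LZ12} for $\xi=1$).

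The key observation is that one has freedom in choosing $\bbA$ inside $\bigcup_i C^0_i\times C^1_i$: non-separability of $\bbB$ from $\bbA$ only requires that $\bbA$ meet each $C^0_i\times C^1_i$ in a way that forces any open rectangle containing $\bbA$ to also capture points of $\bbB$. So I would replace $\bbA$ by the countable set $\bbA:=\{(\alpha_i,\beta_i)\mid i\in\omega\}$ where $(\alpha_i,\beta_i)$ is a single well-chosen point of $C^0_i\times C^1_i$ — concretely one can take $\alpha_i=\beta_i=0^i1^\infty$, so that $\alpha_i\to 0^\infty$ and $\beta_i\to 0^\infty$. Then $\Pi_0[\bbA]=\{0^i1^\infty\mid i\in\omega\}$ and $\Pi_1[\bbA]=\{0^i1^\infty\mid i\in\omega\}$ are both countable, as required. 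The step that needs an actual verification is that this shrunk $\bbA$ is still not separable from $\bbB$ by a $(\open\times\open)_\sigma$ set: given any countable family of open rectangles $\{U_k\times V_k\}$ covering $\bbA$, some $U_{k}\times V_{k}$ contains infinitely many $(\alpha_i,\beta_i)$, hence contains $(0^\infty,0^\infty)$ in its closure, and by openness $U_k\times V_k$ contains a neighborhood of $(0^\infty,0^\infty)$, which by construction of $\bbB$ meets $\bbB$. This is a routine recycling of the argument already present in \cite{LZ12}, and is also parallel to the convergence arguments in the proof of Theorem \ref{Op}.

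I expect the main obstacle to be purely bookkeeping: one must make sure that the precise $\bbB$ from the Lecomte--Zeleny example (which lives in $(\bbX\setminus\bigcup C^0_i)\times(\bbY\setminus\bigcup C^1_i)$) still does its job against the smaller $\bbA$, i.e.\ that shrinking $\bbA$ to one point per block does not create an open-rectangle separation. Since the obstruction to separation in their example is localized at the limit point $(0^\infty,0^\infty)$ and only uses that $\bbA$ accumulates there through the blocks $C^0_i\times C^1_i$ along the diagonal, keeping one point per block on the diagonal suffices, and no genuinely new idea is needed — one simply checks that the relevant "there is a block below $U$ and below $V$ simultaneously" pigeonhole still goes through. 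After this verification, part 2 follows immediately, and feeding $(\bbX,\bbY,\bbA,\bbB)$ into Theorem \ref{Thm2} yields the desired concrete $\leq$-minimum example for $\xi=1$; the $\xi=2$ case is handled the same way starting from the corresponding $\fsigma$-type example of \cite{LZ12}.
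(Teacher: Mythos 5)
Your concrete example is exactly the paper's ($\bbX=\bbY=\cantor$, $C^\varepsilon_i=N_{0^i1}$, $\bbA=\{(0^i1^\infty,0^i1^\infty)\mid i\in\omega\}$, $\bbB$ essentially $\{(0^\infty,0^\infty)\}$), but your verification of the hypothesis of part 1 goes in the wrong direction. The hypothesis (matching Conjecture \ref{LZ2}, in which it is $B$ that must be separated from $A$) is that $\bbB$ is not separable from $\bbA$ by a $(\open\times\open)_\sigma$ set, i.e.\ every countable union of open rectangles \emph{containing $\bbB$} must meet $\bbA$. You instead argue that every countable family of open rectangles covering $\bbA$ must meet $\bbB$; that statement is actually false for this pair, since $\bbA\subseteq(\cantor\setminus\{0^\infty\})\times(\cantor\setminus\{0^\infty\})$, an open rectangle disjoint from $\bbB$. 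Moreover the step you use to prove it is invalid: an open rectangle containing infinitely many of the points $(0^i1^\infty,0^i1^\infty)$ only has $(0^\infty,0^\infty)$ in its \emph{closure}; openness does not let you conclude that it contains a neighborhood of that point.

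The gap is easily repaired, and your closing remark about the ``block below $U$ and below $V$ simultaneously'' pigeonhole is the right idea for the correct direction: if $U\times V$ is an open rectangle with $(0^\infty,0^\infty)\in U\times V$, then $N_{0^n}\subseteq U$ and $N_{0^m}\subseteq V$ for some $n,m$, so for $k\geq\max(n,m)$ one has $(0^k1^\infty,0^k1^\infty)\in(U\times V)\cap\bbA$; hence no $(\open\times\open)_\sigma$ set containing $\bbB$ can avoid $\bbA$, while both projections of $\bbA$ are countable. With that correction your argument is the same as the paper's (which leaves this check to the reader as routine); part 1 is just the Lecomte--Zeleny result of \cite{LZ12} restated, so nothing further is required there.
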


\begin{proof} Let $\bbX:=\bbY:=2^\omega$. It is easy to see that $C^\varepsilon_n:=N_{0^n1}$ defines a $0$-disjoint  family. Then $\bbB:=\{(0^\infty,0^\infty)\}$ and $\bbA:=\{(0^n1^\infty,0^n1^\infty);n\in\omega\}$ satisfy 2.
\end{proof}

We obtain from this Proposition and Theorem \ref{Thm2} the following minimum example for non separability by closed rectangles:

\[\bbA'=\{(\overline{0^n1^\infty},\overline{n})| n\in \omega\}\cup \{(\overline{n},\overline{0^n1^\infty})| n\in \omega\},\] \[\bbB=\{(\overline{0^\infty},\overline{0^\infty})\}.\] 

We now consider $\gdelta$ rectangles. The following definition was introduced in \cite{LZ12}.

\begin{defi} (Lecomte-Zeleny) Let $1\leq\xi<\omega_1$. A $\xi$-disjoint family $(C^\varepsilon_i)_{(\varepsilon,i)\in2\times\omega}$ of subsets of a 0-dimensional Polish space $\mathbb{W}$ is said to be \textbf{very comparing} if for each natural number q, there is a partition $(O^p_q)_{p\in\omega}$ of $\mathbb{W}$ into $\mathbf\Delta^0_\xi$ sets such that, for each $i\in\omega$,
\begin{enumerate}
	\item if $q<i$, then there is a $p^i_q\in\omega$ such that $C^0_i\cup C^1_i\subseteq O^{p^i_q}_q$,
	\item if $q\geq i$ and $\varepsilon\in 2$, then $C^\varepsilon_i\subseteq C^{2i+\varepsilon}_q$,
	\item if $(\varepsilon,i)\in 2\times\omega$, then $\bigcup_{r\geq i}\bigcap_{q\geq r} O^{2i+\varepsilon}_q=C^\varepsilon_i$.
\end{enumerate}
\end{defi}

\begin{Pro}\label{LZE}\begin{enumerate}
	\item (Lecomte-Zeleny) Let $(C^\varepsilon_i)_{(\varepsilon,i)\in2\times\omega}$ be a very comparing 1-disjoint family of subsets of a $0$-dimensional Polish space $\mathbb{W}$. Let  
	$\bbX\subseteq\mathbb{W}\backslash\bigcup_{i\in \omega}\big(C^1_i\big), \bbY\subseteq\mathbb{W}\backslash\big(\bigcup_{i\in \omega}C^0_i\big),\bbB\subseteq \Delta(\mathbb{W}\backslash \big(\bigcup_{(\varepsilon,i)\in 2\times\omega} C^\varepsilon_i\big))$, and $\bbA\subseteq \bigcup_{i\in\omega} C^0_i\times C^1_i$
 such that $\bbB$ is not separable from $\bbA$ by a $(\fsigma\times\fsigma)_\sigma$ set. Then they satisfy Conjecture \ref{LZ2} for $\xi=2$.
	\item There are $\bbX$, $\bbY$, $\bbA$ and $\bbB$ which satisfy the hypothesis in 1.  such that $\bbA$ has countable projections.
\end{enumerate}
\end{Pro}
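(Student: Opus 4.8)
For part~1 I would simply quote the corresponding statement of \cite{LZ12}; all the new content is in part~2, namely producing an explicit instance of the scheme of part~1 in which $\bbA$ has countable projection. The plan is to realise everything inside $\cantor$ with the $C^\varepsilon_i$ taken to be \emph{singletons}, $C^\varepsilon_i=\{c^\varepsilon_i\}$: such a family is automatically a $1$-disjoint family as soon as the $c^\varepsilon_i$ are pairwise distinct, and it forces $\bbA\subseteq\{(c^0_i,c^1_i)\mid i\in\omega\}$, so the countable-projection requirement comes for free. I would choose the points $c^\varepsilon_i$ by a routine recursion arranging simultaneously: (a) pairwise distinctness; (b) \emph{tightness}, i.e.\ the longest common initial segment of $c^0_i$ and $c^1_i$ has length tending to $\infty$ with $i$; and (c) \emph{diagonal density}, i.e.\ for every $s\in 2^{<\omega}$ there is $i$ with $c^0_i,c^1_i\in N_s$. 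Then one sets $\bbX:=\cantor\setminus\{c^1_i\mid i\in\omega\}$, $\bbY:=\cantor\setminus\{c^0_i\mid i\in\omega\}$, $\bbA:=\{(c^0_i,c^1_i)\mid i\in\omega\}$ and $\bbB:=\Delta\big(\cantor\setminus\{c^\varepsilon_i\mid\varepsilon\in 2,\, i\in\omega\}\big)$. The inclusions demanded in part~1 are immediate from (a); $\bbX$ and $\bbY$ are $\gdelta$ in $\cantor$, hence zero-dimensional Polish; $\bbA$ is $\fsigma$ and $\bbB$ is $\gdelta$, hence both analytic; and they are disjoint since $\bbA$ is off-diagonal.

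Next I would verify that $(C^\varepsilon_i)$ is very comparing. Choose an increasing sequence $(m_q)$ with $m_q\to\infty$ and $m_q$ large enough to separate the finitely many points $c^\varepsilon_i$ with $i\le q$, and take as the level-$q$ partition the clopen partition $\{N_t\mid t\in 2^{m_q}\}$, re-indexed so that $N_{c^\varepsilon_i\restriction m_q}$ receives index $2i+\varepsilon$ for $i\le q$ (these pieces are distinct by the choice of $m_q$), the remaining pieces receive the indices between $2q+2$ and $2^{m_q}-1$, and $O^p_q:=\emptyset$ for $p\ge 2^{m_q}$. Then condition~(2) of the definition holds by construction; condition~(3) holds because $N_{c^\varepsilon_i\restriction m_q}\downarrow\{c^\varepsilon_i\}$ as $q\to\infty$, so for $r\ge i$ one gets $\bigcap_{q\ge r}O^{2i+\varepsilon}_q=\{c^\varepsilon_i\}$; and condition~(1) holds because, by tightness, for $i>q$ one has $c^0_i\restriction m_q=c^1_i\restriction m_q$ (as $m_q\le m_{i-1}$ is at most the length of the common initial segment), so $c^0_i$ and $c^1_i$ lie in one and the same piece of the level-$q$ partition.

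The remaining, and in my view main, point is that $\bbB$ is not separable from $\bbA$ by a $(\fsigma\times\fsigma)_\sigma$ set, for which I would argue directly by Baire category rather than through Lemma~\ref{LZ1}. If $R$ were such a separating set, write $R$ as a countable union $\bigcup_m (F_m\times G_m)$ of products of closed sets; then $\bbB\subseteq R$ gives $\cantor\setminus\{c^\varepsilon_i\mid \varepsilon\in 2,\, i\in\omega\}\subseteq\bigcup_m(F_m\cap G_m)$, and since the left-hand side is comeager, some $F_{m_0}\cap G_{m_0}$ is a non-meager closed set, hence contains some $N_s$; therefore $N_s\times N_s\subseteq R$, and by diagonal density there is $i$ with $(c^0_i,c^1_i)\in N_s\times N_s\subseteq R$, contradicting $R\cap\bbA=\emptyset$. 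The only genuinely delicate ingredient is thus the recursion producing the $c^\varepsilon_i$: tightness (needed for condition~(1) of ``very comparing'') and diagonal density (needed for non-separability) must be reconciled with pairwise distinctness, but since tightness only asks that $c^0_i$ and $c^1_i$ be close to \emph{one another}, there is no real tension --- for instance one can take $c^0_i$ and $c^1_i$ to extend a common long word $t_i$ which in turn extends the $i$-th enumerated element of $2^{<\omega}$, has length $>m_{i-1}$, and is chosen incomparable with the previously used words --- and I expect the whole verification to be routine.
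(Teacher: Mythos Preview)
Your approach is correct and genuinely different from the paper's. The paper does not build a new very comparing family: it takes $\mathbb{W}=3^\omega$ and the family $C^\varepsilon_i=\{\theta(i)\varepsilon\alpha\mid\alpha\in 2^\omega\}$ (with $\theta$ an enumeration of the sequences in $3^{<\omega}$ that are empty or end in $2$), which was already shown in \cite{LZ12} to be very comparing; here each $C^\varepsilon_i$ is a copy of $2^\omega$, not a singleton. It then sets $\bbB=\Delta\big(\mathbb{W}\setminus\bigcup_{\varepsilon,i}C^\varepsilon_i\big)$ and simply selects one point from each factor, $\bbA=\{(\theta(i)0^\infty,\theta(i)1^\infty)\mid i\in\omega\}$, so that the projections become countable. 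Non-separability is left implicit; it follows by exactly the Baire-category argument you give, using that $\bigcup_{\varepsilon,i}C^\varepsilon_i$ is meager in $3^\omega$ and that every basic open $N_s$ contains the pair $(\theta(i)0^\infty,\theta(i)1^\infty)$ for $\theta(i)=s2$.

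Your route --- singleton $C^\varepsilon_i$'s in $2^\omega$ with a direct verification of ``very comparing'' via clopen partitions into cylinders of growing length --- is more self-contained, since nothing is imported from \cite{LZ12}; the paper's route is shorter precisely because that verification is outsourced. One small wrinkle in your final sketch: requiring $t_i$ to be \emph{incomparable} with every earlier $t_j$ is not always achievable (if some $t_j\subsetneq s_i$, then any $t_i\supseteq s_i$ extends $t_j$), but incomparability is not actually needed. It suffices to pick $c^0_i,c^1_i\in N_{t_i}$ distinct from each other and from the finitely many previously chosen points; pairwise distinctness, tightness (via $|t_i|>m_{i-1}$), and diagonal density (via $t_i\supseteq s_i$) then all go through, and your verification of conditions (1)--(3) and of non-separability is correct as written.
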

\begin{proof} Take $\mathbb{W}:=3^\omega$, and $C^\varepsilon_i=\{\theta(i)\varepsilon\alpha|\alpha\in 2^\omega\}$, where $\theta$ is an enumeration of $\{s\in 3^{<\omega}|s=\emptyset \lor s(|s|-1)=2\}$. This was shown in \cite{LZ12} to be a very comparing  family. By taking $\bbB:=\Delta\big(\mathbb{W}\backslash  \big(\bigcup_{(\varepsilon,i)\in 2\times\omega}C^\varepsilon_i\big)\big)$, and $\bbA:=\{(\theta(i)0^\infty,\theta(i)1^\infty)|i\in\omega\}$, we obtain 2.
\end{proof}

Then we obtain our examples for non separability by $\gdelta$ rectangles:

\[\bbA'=\{(\overline{\theta(n)0^\infty},\overline{n})|n\in\omega\}\cup\{(\overline{n},\overline{\theta(n)1^\infty})|n\in\omega\},\]
\[\bbB'=\{(\overline{\alpha},\overline{\alpha})|\alpha\in\bbB\}.\]

\section{Separation by  a $(\mathbf\Sigma^0_1\times\mathbf\Sigma^0_2)_\sigma$ set and by a $\mathbf\Pi^0_1\times\mathbf\Pi^0_2$ set}

One would like to know if the argument in the previous section extends to other classes of the type $\Gamma\times\Gamma'$. Even if Conjecture \ref{LZ2} holds for $\xi>2$, the first that is clear is that if $\Gamma=\Gamma'=\mathbf\Pi^0_\xi$, then we cannot extend the definition right before Theorem \ref{Thm2}. Indeed, if $\bbA$ has countable projections,  then the product of its projections will be a $\mathbf\Pi^0_\xi\times\mathbf\Pi^0_\xi$ set which separates $\bbA$ from $\bbB$.

So another option is to see what happens if $\Gamma\neq\Gamma'$. In particular, we can see what happens in the case $\close\times\gdelta$. Following the same argument as in the previous section, it is enough to prove the natural extension of Conjecture \ref{LZ2} for $(\open\times\fsigma)_\sigma$. 

In order to do this, we will work with the Gandy-Harrington topology on a recursively presented Polish space $Z$, hereafter denoted by $GH_Z$. This is the topology generated by all $\Sigma^1_1$ subsets of $Z$. This topology is not regular, however, it is in fact Polish on $\Omega=\{x\in Z| \omega^z_1=\omega^{CK}_1\}$. In fact, $\Omega$ is $\Sigma^1_1$, dense in $(Z,GH_Z)$ and for every $W\in \Sigma^1_1$, $W\cap\Omega$ is a $GH$-clopen subset of $\Omega$.

For each $\fsigma$ subset $F$ of $\cantor$, let $(F_n)$ be an increasing sequence of closed sets such that $F=\cup_{n\in\omega}F_n$. For each $s\in 2^{<\omega}$ let $n_s:=\mbox{min}\{n| F_n\cap N_s\neq\emptyset\}$, when this exists. One can in fact suppose $F$ is dense, so that $n_s$ will always exist. Let $D:=\{s\in 2^{<\omega}|s=\emptyset \lor n_s\neq n_{s^m}\}$, where $s^m:=s|_{|s|-1}$ for $s\neq \emptyset$ and $s^m:=s$ if $s=\emptyset$. With this in mind, set:

\[\bbX_F:= 3^\omega\backslash F,\; \bbY_F:=\cantor,\]
\[\bbB_F:=\Delta(\cantor\backslash F),\]
\[\bbA_F:=\{(\alpha,\beta)| \exists s\in D(s2\sqsubseteq\alpha \land  s\sqsubseteq \beta \land \beta\in F_{n_s})\}.\]

\begin{thm}\label{Thm1} Let $F$ be as above, $X,Y$ be Polish spaces, and $A,B$ be disjoint analytic subsets of $X\times Y$. At least one of the following holds:
\begin{enumerate}
	\item $B$ is separable from $A$ by a $(\open\times\fsigma)_\sigma$ set.
	\item $(\bbX_F,\bbY_F,\bbB_F,\bbA_F)\leq (X,Y,B,A)$
\end{enumerate}
If moreover $F$ is meager in $\cantor$, then exactly one of the previous must hold. 
\end{thm}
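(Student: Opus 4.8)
The plan is to follow the standard scheme for Hurewicz-type dichotomies: reduce the classical statement to an effective one by relativization, use the topological characterization (a $(\open\times\fsigma)_\sigma$-separation analogue of Lemma~\ref{LZ1}) to extract a witness point when separation fails, and then build the continuous reductions $f\colon\bbX_F\to X$ and $g\colon\bbY_F\to Y$ by an exhaustion/fusion argument inside the Gandy--Harrington topology on $X\times Y$. First I would establish the ``exactly one'' part under the extra hypothesis that $F$ is meager: here one shows directly that $\bbB_F$ is not separable from $\bbA_F$ by a $(\open\times\fsigma)_\sigma$ set, by arguing that any such set separating them would, after intersecting with suitable basic clopen boxes $N_{s2}\times N_s$ for $s\in D$, separate the closed set $\cantor\backslash F$ from $F$ restricted to those boxes; meagerness of $F$ forces $\cantr\backslash F$ to be dense, and one chases the indices $n_s$ along a branch to reach a contradiction. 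Combined with the reduction from the failure of~(1), this gives exactly one, as in the proof of Proposition~\ref{Borel}.

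For the substantive direction, assume $B$ is not separable from $A$ by a $(\open\times\fsigma)_\sigma$ set. By relativization I may assume $X,Y$ are recursively presented and $A,B$ are $\Sigma^1_1$. The key tool is the version of Lemma~\ref{LZ1} for the pair of classes $(\open,\fsigma)$, i.e., $B\cap\overline{A}^{\tau_1\times\tau_2}\neq\emptyset$; pick a point $(x_0,y_0)$ in this set, which lies in $\Omega$ after a further relativization, and work from now on inside the Gandy--Harrington topology. The construction produces, by recursion on $s\in 2^{<\omega}$, $\Sigma^1_1$ sets $A_s\subseteq X\times Y$ and points approximating the structure of $\bbA_F$: for $s\in D$ one arranges a point of $A$ in the box corresponding to $(s2,s)$ together with its $F_{n_s}$-level data, using that the $\tau_1\times\tau_2$-closure of $A$ meets every relevant $GH$-neighborhood precisely because separation fails at finer and finer stages. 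One then sets $f$ and $g$ on the ``limit'' points $\alpha\in 3^\omega\backslash F$ and $\beta\in\cantor$ to be the $GH$-limits of these approximations, checking continuity at the limit points (the only non-clopen part) exactly as in the continuity verifications in Theorem~\ref{Op} and the $\close\times\open$ case: the relevant indices diverge along any convergent sequence, so the images converge.

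The main obstacle I expect is twofold. First, one must prove the $(\open,\fsigma)$-analogue of Lemma~\ref{LZ1} (or at least the direction giving a nonempty intersection of $B$ with the mixed closure), since the excerpt only states the $(\mathbf\Sigma^0_\xi\times\mathbf\Sigma^0_{\xi'})_\sigma$ version and the $(\mathbf\Pi^0_\xi\cap\Sigma^1_1)\times(\mathbf\Pi^0_{\xi'}\cap\Sigma^1_1)$ version in Lemma~\ref{lem1}; here $\open=\mathbf\Sigma^0_1$ but $\fsigma=\mathbf\Sigma^0_2$, so this is literally the $\xi=1,\xi'=2$ instance of Lemma~\ref{LZ1} and should be available directly. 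Second, and more delicate, is organizing the recursion so that the set $D$ and the level indices $n_s$ of the example match the combinatorics of the $GH$-fusion: one needs the sets $F_n$ and the branching set $D\subseteq 2^{<\omega}$ to be reflected faithfully by a sequence of $\Sigma^1_1$ approximations whose diameters shrink, so that the maps $f,g$ are well defined and continuous, and so that $(f\times g)$ sends $\bbA_F$ into $A$ and $\bbB_F$ into $B$. Verifying that $f\times g$ is a genuine reduction is then routine but bookkeeping-heavy, and this is where the bulk of the per-case work lies.
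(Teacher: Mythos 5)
Your overall plan does follow the paper's route: relativize, apply Lemma \ref{LZ1} with $\xi=1,\xi'=2$ to get $N:=B\cap\overline{A}^{\tau_1\times\taudos}\neq\emptyset$, run a fusion in the Gandy--Harrington topology indexed by finite sequences and organized around $D$ and the indices $n_s$, and prove the ``exactly one'' part by a Baire category argument from meagerness of $F$ (your aside that $\cantor\backslash F$ is closed is wrong --- it is a dense $G_\delta$ --- but the category argument you gesture at, finding a basic clopen set inside some $U_i\cap V_i$ and then a node of $D$ inside it to produce a point of $\bbA_F$, is essentially the paper's).

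The genuine gap is in the core of the construction. You justify the recursion by asserting that ``the $\tau_1\times\taudos$-closure of $A$ meets every relevant GH-neighborhood precisely because separation fails at finer and finer stages''; this is not literally true and does not follow from non-separability, which only yields the single nonempty set $N$ at the root. What actually propagates the construction in the paper is a specific invariant: along $s\in D$ one maintains nonempty $\Sigma^1_1$ sets $V_s\subseteq N\cap\Omega\cap(X_s\times Y_s)$ of small GH-diameter, nested along $D$; and at a branching node ($s\in D$, $s\varepsilon\notin D$) one chooses the $A$-point $(x_{s2},y_{s\varepsilon})$ with $y_{s\varepsilon}\in\overline{\Pi_1[V_s]}$. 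This choice is possible because $\overline{\Pi_1[V_s]}$ is the closure of a $\Sigma^1_1$ set, hence closed and $\Sigma^1_1$, hence $\taudos$-open, so $X_s\times\big(Y_s\cap\overline{\Pi_1[V_s]}\big)$ is $\tau_1\times\taudos$-open and meets $\overline{A}^{\tau_1\times\taudos}$, hence meets $A$; and it is needed because keeping $y_{s\varepsilon}$ in $\overline{\Pi_1[V_s]}$ is exactly what lets you re-enter $V_{s^-}$ at the next node of $D$, so that along branches avoiding $F$ the pairs $(x_s,y_s)$ GH-converge to a point of $B$, while the freezing conditions ($x_{s2t}=x_{s2}$, and $y_s=y_{s^m}$ off $D$) force $(f\times g)(\bbA_F)\subseteq A$, since $n_{\beta|_i}$ is eventually constant for $\beta\in F_{n_s}\cap N_s$. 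None of this mechanism appears in your sketch, and the step you defer as ``routine but bookkeeping-heavy'' is precisely where it is required; without it the recursion cannot be continued past a branching node.
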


\begin{proof} 

Suppose that $B$ is not separable from $A$ by a $(\open\times\fsigma)_\sigma$ set.

Note that, without loss of generality, we can suppose that $X,Y$ are recursively presented Polish spaces, and that $A,B$ are $\Sigma^1_1$. Then, by Lemma \ref{LZ1},  $N:=B\cap\overline{A}^{\tau_1\times\taudos}$ is not empty. 

For each $s\in2^{<\omega}\backslash \{\emptyset\}$, let $s^-:=s|_{\max\{|t|\;| t\sqsubset s \land t\in D\}}$, so that $s^-$ is either empty or the last proper initial segment $t$ of $s$ where $n_{t}$ changed value, and set $\emptyset^-:=\emptyset$.

We construct, for each $t\in 3^{<\omega}$ and $ s\in 2^{<\omega}$, 
\begin{enumerate}
	\item[i.] points $x_t\in X$, $y_s\in Y$,
	\item[ii.] $\semirecursive$ sets $X_t\subseteq X$ and $Y_s\subseteq Y$, 
	\item[iii.] a $\Sigma^1_1$ set $V_{s}\subseteq X\times Y$ if $s\in D$.
\end{enumerate}

We will require these sets to satisfy the following conditions, for each $t\in 3^{<\omega}$ and $s\in 2^{<\omega}$:
\begin{enumerate}
	\item\label{c1} $x_t\in X_t$, $y_s\in Y_s$, and $(x_s,y_s)\in V_s$
	\item\label{c2} $\mbox{diam}(X_t)<2^{-|t|}$, $\mbox{diam}(Y_s)<2^{-|s|}$; and $\mbox{diam}_{GH}(V_{s})<2^{-|s|}$
	\item\label{c3} $\left\{\begin{array}{ll}
	\overline{X_s}\subseteq X_{s^-} &\mbox{if } s\in D\backslash\{\emptyset\}\\
	\overline{X_{t}}\subseteq X_{t^m} &\mbox{if } t\in 3^\omega\backslash D\\
	\overline{Y_{s\varepsilon}}\subseteq Y_s & \mbox{if } \varepsilon\in 2\\
	V_{s}\subseteq V_{s^-} &\mbox{if } s\in D\end{array}\right.$
	\item\label{c4} $V_{s}\subseteq N\cap\Omega\cap(X_{s}\times Y_{s})$
	\item\label{c5} if $s\in D$ and $s\varepsilon\notin D$, then $\left\{\begin{array}{l}(x_{s2},y_{s\varepsilon})\in A\cap\Omega\cap (X_s\times Y_s)\\ y_{s\varepsilon}\in\overline{\Pi_1[V_s]}\end{array}\right.$
	\item\label{c6} $x_{s2t}=x_{s2}$, and, if $s\notin D$, and $s^m\notin D$, then $y_{s}=y_{s^m}$.
\end{enumerate}

So suppose that these are already constructed. We will construct our functions $f:\bbX_F\to X$ and $g:\bbY_F\to Y$. If $\alpha\in 2^\omega$ then we define
 \[\{g(\alpha)\}=\bigcap_{n\in\omega}\overline{Y_{\alpha|_n}}\]
and as usual the map $g$ defined this way is continuous, and $y_{\alpha|_n}\to g(\alpha)$.

For $f$, there are two cases. If there is $i\in\omega$ such that $\alpha(i)=2$, let $i_0$ be the smallest. By conditions \ref{c2} and \ref{c3}, one can define $\{f(\alpha)\}:=\cap_{n>i_0} \overline{X_{\alpha|_n}}$, and $x_{\alpha|_n}\to f(\alpha)$. 

Otherwise, $(n_{\alpha|_i})$ diverges, since otherwise $\alpha\in F$. In this case, by conditions \ref{c2} and \ref{c3}, we can define $\{f(\alpha)\}:=\cap_{i\in\omega} \overline{X_{(\alpha|_{i})^-}}$. 

We note that on the open set $\{\alpha| \exists i (\alpha(i)=2)\}$  the function will be continuous as usual. So suppose $(n_{\alpha|i})$ diverges.  Let $(\alpha_k)$ be a sequence in $\bbX_F$ which converges to $\alpha$. 
Given an open neighborhood of $f(\alpha)$, we can find $s\sqsubseteq\alpha$ such that $s\in D$ and $X_s$ is contained in such a neighborhood. 
Now, for big enough $k$, $s\sqsubseteq \alpha_k$. If $n_{\alpha_k|_i}$ changes infinitely often, then by definition $f(\alpha_k)\in X_{s}$. 
If it does not changes infinitely often, there is a last $k_0$ such that $s\sqsubseteq \alpha_k|_{k_0}$ and $\alpha_k|_{k_0}\in D$. In particular, $X_{\alpha_k|_l}\subseteq X_s$ if $l\geq k_0$, and then, applying condition \ref{c3} and the definition of $f(\alpha_k)$,  $f(\alpha_k)\in X_{s}$.

Now, we need to show that $f\times g$ is a reduction. If $(\alpha,\alpha)\in \bbB_F$, then $n_{\alpha|_i}$ changes infinitely often. Define $\{F(\alpha)\}=\cap V_{(\alpha|_{i})^-}\subseteq B$. By condition \ref{c1}, $(x_{(\alpha|_{k})^-},y_{(\alpha|_{k})^-})\to F(\alpha)$ in the Gandy-Harrington topology. Since this topology refines the product topology, $\big(f(\alpha),g(\alpha)\big)=F(\alpha)\in B$. 

If $(\alpha,\beta)\in \bbA_F$, then condition \ref{c6} and the definition of our maps imply that there is an $s\in D$ such that $\big(f(\alpha),g(\beta)\big)=(x_{s2},y_{s\varepsilon})$, which belongs to $A$ by condition \ref{c5}.

Now, we will show that the construction is possible. We will construct $x_t,y_s, X_t,Y_s,V_s$ by induction on the lengths of $t$ and $s$. For the length $0$, since $N$ is a non-empty $\Sigma^1_1$ set, $N\cap\Omega\neq\emptyset$, there is $(x_\emptyset,y_\emptyset)\in N\cap\Omega$. We find $\semirecursive$ neighborhoods of good diameter, $X_\emptyset$ and $Y_\emptyset$. As 
\[(x_\emptyset,y_\emptyset)\in N\cap\Omega\cap (X_\emptyset\times Y_\emptyset),\]
and this set is $\Sigma^1_1$, we can find another $\Sigma^1_1$ neighborhood $V_\emptyset$ with good diameter in the Gandy-Harrington topology, such that $V_\emptyset\subseteq N\cap\Omega\cap (X_\emptyset\times Y_\emptyset)$. They satisfy the required conditions.

So suppose that we have constructed $x_t, y_s, X_t,Y_s, V_{s}$ for all $s\in 2^p,t\in 3^p$. For each $k\in 3$ and $\varepsilon\in 2$, we will construct the respective points and sets for each finite sequences $tk,s\varepsilon$ of length $p+1$ by cases.

The first case is when $t\notin2^{<\omega}$. In this case, by condition \ref{c6}, we must simply copy the point $x_{tk}:=x_t$ and shrink the $\semirecursive$ neighborhood $X_t$ to a new one of good diameter and which satisfies $\overline{X_{tk}}\subseteq X_t$.

The second case is when $s\in D$. Define $y_{s\varepsilon}=y_s$ if $s\varepsilon\in D$ and $x_{s\varepsilon}=x_s$ if $s\in 2^{<\omega}$. Note also that $(x_{s},y_{s})\in (X_{s}\times Y_{s})\cap (X\times\overline{\Pi_1[V_{s}]})\cap N$. In particular $(X_{s}\times  Y_{s})\cap(X\times\overline{\Pi_1[V_{s}]})\cap \overline{A}^{\tau_1\times\taudos}\neq\emptyset$. So there exists $(x,y)\in (X_{s}\times Y_{s})\cap (X\times\overline{\Pi_1[V_{s}]})\cap A\cap\Omega$. Define $x_{s2}=x$ and $y_{s\varepsilon}=y$ if $s\varepsilon\notin D$. 

In all of these cases $x_{sk}\in X_s$ and $y_{s\varepsilon}\in Y_s$, so shrink the neighborhoods $X_{s}$ and $Y_{s}$ to $\Sigma^0_1$ neighborhoods $X_{sl}$ and $Y_{sl}$ of good diameter. Finally, if $s\varepsilon\in D$, since $(x_{s\varepsilon},y_{s\varepsilon})\in V_s\cap( X_{s\varepsilon}\times Y_{s\varepsilon})\cap \Omega$, we can also shrink $V_s$ to a new $\Sigma^1_1$ set $V_{s\varepsilon}\subseteq V_s\cap (X_{s\varepsilon}\times Y_{s\varepsilon})$ of good diameter. It is easy to check that these elements and sets satisfy all our conditions.

The third and final case is when $s\in 2^{<\omega}\backslash D$. If $s\varepsilon\notin D$, we can define $x_{s\varepsilon}=x_s$. If  $\varepsilon\neq 2$, let $y_{s\varepsilon}=y_s$. Shrink $X_s$ and $Y_s$ accordingly. Clearly, if $s\varepsilon\in 2^{<\omega}$, then condition \ref{c6} is satisfied. 

Since $s\in 2^{<\omega}$, note that $y_{s}=y_{s^-\varepsilon'}\in \overline{\Pi_1[V_{s^-}]}\cap Y_s$ for $\varepsilon'\in 2$ such that $s^-\varepsilon'\subseteq s$. Then, $\Pi_1[V_{s^-}]\cap Y_s\neq \emptyset$. So take $(x,y)\in V_{s^-}\cap (X_{s^-}\times Y_s)\cap \Omega$. If $s\varepsilon\in D$, define $x_{s\varepsilon}=x$ and $y_{s\varepsilon}=y$. Then shrink the neighborhoods $X_{s^-}$ and $Y_s$ to new neighborhoods $X_{s\varepsilon}$ and $Y_{s\varepsilon}$. Finally find a $\Sigma^1_1$ neighborhood $V_{s\varepsilon}\subseteq V_{s^-}\cap (X_{s\varepsilon}\times Y_{s\varepsilon})\cap \Omega$. It is clear that these objects satisfy our conditions. 

So the construction is possible. 

Now suppose that $F$ is meager. We will show that $\bbB_F$ is not separable from $\bbA_F$, so that we can only have at most one of our options, like in previous cases. 

So suppose $\bbB_F\subseteq \cup_{i\in\omega} (U_i\times V_i)$, with $U_i\in\open(3^\omega\backslash F)$ and $V_i\in\fsigma(2^\omega)$. In fact, we can suppose $V_i\in\close(2^\omega)$, for each $i\in\omega$. First, note that $\cantor\backslash F\subseteq \cup_{i\in\omega} (U_i\cap V_i)$. Since $F$ is meager, there is $i$ such that $U_i\cap V_i$ is not meager in $\cantor$. So there exists $s\in 2^{<\omega}$ such that $N_s(2^\omega)\subseteq U_i\cap V_i$.

 Note that there is 
$s\sqsubseteq \hat{s}\in 2^{<\omega}$ such that $N_{\hat{s}}(3^\omega\backslash F)\subseteq U_i$. 
Thus, if there is no $\hat{s}\sqsubseteq s'\in D$, 
then $N_{\hat{s}}(2^\omega)\subseteq F_{n_{\hat{s}}}$ since $F_{n_{\hat{s}}}$ is closed. This contradicts the fact that $F$ is meager. 
Finally, take $\alpha\in N_{s'2}(3^\omega\backslash F)$ and $\beta\in F_{n_{s'}}\cap N_{s'}(2^\omega)$, 
so that $(\alpha,\beta)\in (U_i\times V_i)\cap \bbA_F$. 
\end{proof}

\begin{Pro}\label{Pro4}\begin{enumerate}
	\item  Let $F_n$ be closed subsets of $2^\omega$ such that their union $F$ is dense. If  
	$\bbX\subseteq\bbX_F, \bbY\subseteq\bbY_F,\bbB\subseteq \bbB_F$, $\bbA\subseteq \bbA_F$, and
 $\bbB$ is not separable from $\bbA$ by a $(\open\times\fsigma)_\sigma$ set, then $\bbX,\bbY,\bbA$ and $\bbB$ satisfy our previous theorem.
	\item There are $\bbX$, $\bbY$, $\bbA$ and $\bbB$ which satisfy the hypothesis in 1.  such that $\bbA$ has countable projections.
\end{enumerate}
\end{Pro}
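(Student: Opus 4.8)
The plan is to deduce (1) from Theorem~\ref{Thm1} by a routine restriction argument, and to establish (2) by choosing $F$ so that the canonical example becomes combinatorially transparent and can be shrunk on its $\bbA$--side to a set with countable projections.

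For (1), I would first note that the class $(\open\times\fsigma)_\sigma$ is stable under preimages along maps of the form $f\times g$ with $f,g$ continuous: if $S=\bigcup_n\big(O_n\times\bigcup_m C_{n,m}\big)$ with the $O_n$ open and the $C_{n,m}$ closed, then $(f\times g)^{-1}(S)=\bigcup_n\big(f^{-1}(O_n)\times\bigcup_m g^{-1}(C_{n,m})\big)$ is of the same form. Consequently, if $(\bbX,\bbY,\bbB,\bbA)\leq(X,Y,B,A)$ via $f,g$ and $S$ were a $(\open\times\fsigma)_\sigma$ set separating $B$ from $A$, then $(f\times g)^{-1}(S)$ would separate $\bbB\subseteq(f\times g)^{-1}(B)$ from $\bbA$ --- since $(f\times g)(\bbA)\subseteq A$ is disjoint from $S$ --- contradicting the hypothesis; so the dichotomy is already exclusive, with no extra assumption on $F$. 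For the ``at least one'' half, apply Theorem~\ref{Thm1} to $X,Y,A,B$: either $B$ is separable from $A$ by a $(\open\times\fsigma)_\sigma$ set and we are done, or there are continuous $f:\bbX_F\to X$, $g:\bbY_F\to Y$ with $\bbB_F\subseteq(f\times g)^{-1}(B)$ and $\bbA_F\subseteq(f\times g)^{-1}(A)$; restricting $f$ to $\bbX$ and $g$ to $\bbY$ and using $\bbB\subseteq\bbB_F$, $\bbA\subseteq\bbA_F$ yields $(\bbX,\bbY,\bbB,\bbA)\leq(X,Y,B,A)$.

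For (2), I would take $F_n:=\{\alpha\in\cantor\mid\alpha(m)=0 \text{ for all } m\geq n\}$, so that $F=\bigcup_n F_n$ is the set of eventually-null binary sequences: it is countable, dense (hence meager), and the increasing union of the finite --- in particular closed --- sets $F_n$. A direct computation then gives $n_s=z(s)$, where $z(s)$ denotes the position immediately after the last occurrence of $1$ in $s$ (with $z(s)=0$ if $s$ contains no $1$); hence $D=\{\emptyset\}\cup\{s\in 2^{<\omega}\mid s(|s|-1)=1\}$ and $F_{n_s}\cap N_s(\cantor)=\{s0^\infty\}$ for every $s\in D$. I then put $\bbX:=\bbX_F$, $\bbY:=\bbY_F$, $\bbB:=\bbB_F=\Delta(\cantor\setminus F)$, and
\[\bbA:=\{(s20^\infty, s0^\infty)\mid s\in D\}.\]
Taking $s$ itself as a witness shows $\bbA\subseteq\bbA_F$; moreover $\bbA$ is disjoint from $\bbB$ (first coordinates of points of $\bbA$ have a coordinate equal to $2$, those of $\bbB$ lie in $\cantor$), and the two projections of $\bbA$, namely $\{s20^\infty\mid s\in D\}$ and $\{s0^\infty\mid s\in D\}$, are countable. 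By (1), it remains only to check that $\bbB$ is not separable from $\bbA$ by a $(\open\times\fsigma)_\sigma$ set.

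For that last point I would rerun the Baire--category argument from the end of the proof of Theorem~\ref{Thm1}, now keeping track of the shrunken $\bbA$. Suppose $\bbB_F\subseteq\bigcup_i(U_i\times V_i)$ with $U_i\in\open(3^\omega\setminus F)$ and, after splitting each $\fsigma$ side into closed pieces, $V_i$ closed in $\cantor$. Then $\cantor\setminus F\subseteq\bigcup_i(U_i\cap V_i)$, and since $F$ is meager some $U_i\cap V_i$ is non-meager, which --- writing $U_i=W_i\setminus F$ with $W_i$ open in $3^\omega$ --- produces $s\in 2^{<\omega}$ with $N_s(\cantor)\subseteq V_i$ and $N_s(\cantor)\subseteq W_i$. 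As $N_s(\cantor)$ is compact and sits inside the open set $W_i$, there is $K\geq|s|$ such that $N_t(3^\omega)\subseteq W_i$ for every $t\in 2^K$ extending $s$. Choosing $s\sqsubseteq s'\in D$ with $|s'|\geq K$, the pair $(s'20^\infty, s'0^\infty)$ lies in $\bbA$, with $s'20^\infty\in W_i\setminus F=U_i$ (it extends $s$, its length-$K$ restriction is $s'|_K\in 2^K$, and it is not in $F$) and $s'0^\infty\in N_{s'}(\cantor)\subseteq N_s(\cantor)\subseteq V_i$; hence $\bbA$ meets $\bigcup_i(U_i\times V_i)$ and the given set does not separate $\bbB$ from $\bbA$. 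The one delicate step --- exactly as in Theorem~\ref{Thm1} --- is the passage from ``$N_s(\cantor)$ lies in the open set'' to ``a basic $3^\omega$--neighbourhood of the right shape lies in it'': $3^\omega\setminus F$ contains sequences (those having a coordinate equal to $2$) that stay bounded away from $\cantor$, so compactness of $N_s(\cantor)$ is genuinely needed, and one must take $s'$ long enough that $s'20^\infty$ still falls inside the neighbourhood produced; the remaining verifications (the values of $n_s$ and of $D$, the inclusion $\bbA\subseteq\bbA_F$, disjointness) are bookkeeping.
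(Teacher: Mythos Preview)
Your proof is correct and follows essentially the same route as the paper. For part~1 you spell out the restriction argument that the paper leaves implicit, and for part~2 you choose the same $F$ (the eventually-zero sequences) and the same shrunken $\bbA=\{(s20^\infty,s0^\infty)\mid s\in D\}$; your parametrization of the $F_n$ by the position of the last~$1$ is a cosmetic variant of the paper's enumeration $\Psi$, yielding the identical $D$. Your compactness argument for non-separability is a little heavier than necessary --- the single-$\hat s$ argument from the end of the proof of Theorem~\ref{Thm1} already suffices, since once $N_{\hat s}(3^\omega\setminus F)\subseteq U_i$ any $s'\in D$ extending $\hat s$ gives $(s'20^\infty,s'0^\infty)\in\bbA\cap(U_i\times V_i)$ --- but it is correct as written.
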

\begin{proof}
Let $\Psi:n\to \{s\in 2^{<\omega}|s=\emptyset \lor s(|s|-1)=1\}$ a bijection such that $\Psi^{-1}(s)\leq \Psi^{-1}(t)$ if $s\subseteq t$. Take $F_n=\{\Psi(m)0^\infty| m\leq n\}$. In particular $D=\{\Psi(m)|m\in\omega\}$. We obtain the following sets:

\[\bbX:=3^\omega\backslash \{\alpha\in\cantor; \forall^\infty i, \alpha(i)=0\}, \bbY:=\cantor\]
\[\bbB:=\{(\alpha,\alpha)\in \bbX\times\bbY| \exists^\infty i \;\alpha(n)=1\}\]
\[\bbA:=\{(\Psi(m)2\alpha,\Psi(m)0^\infty)| \; m\in\omega \land \alpha\in 3^\omega\}\]

One can then define $\bbA=\{(\Psi(m)20^\infty,\Psi(m)0^\infty)| \; m\in\omega\}$. It is routine to check they remain non separable. 
\end{proof}

\begin{Cor}\label{cor1} Let $X,Y$ be Polish spaces, and $A,B$ be disjoint analytic subsets of $X\times Y$, exactly one of the following holds:
\begin{enumerate}
	\item $A$ is separable from $B$ by a $\close\times\gdelta$ set,
	\item there are continuous functions $f:\bbX\to X$ and $g:\bbY\to Y$ such that:
	\begin{align*}
	\Pi_0[\bbA]&\subseteq f^{-1}(\Pi_0[A]),\\
	\Pi_1[\bbA]&\subseteq g^{-1}(\Pi_1[A]),\\
	\bbB & \subseteq (f\times g)^{-1}(B).
	\end{align*}
\end{enumerate}
\end{Cor}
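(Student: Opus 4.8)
The plan is to obtain Corollary~\ref{cor1} from Theorem~\ref{Thm1} (together with the explicit example of Proposition~\ref{Pro4}(2)) in exactly the way Lemma~\ref{Lem1} was obtained from Conjecture~\ref{LZ2}. The bridge is the observation that, for recursively presented $X,Y$ and disjoint $\Sigma^1_1$ sets $A,B$, the set $A$ is separable from $B$ by a $\close\times\gdelta=\mathbf\Pi^0_1\times\mathbf\Pi^0_2$ set if and only if $B$ is separable from $\Pi_0[A]\times\Pi_1[A]$ by a $(\open\times\fsigma)_\sigma=(\mathbf\Sigma^0_1\times\mathbf\Sigma^0_2)_\sigma$ set: by the equivalence $(3)\Leftrightarrow(4)$ of Lemma~\ref{lem1} with $\xi=1,\xi'=2$ the first condition is equivalent to $B\cap\overline{\Pi_0[A]\times\Pi_1[A]}^{\tau_1\times\tau_2}=\emptyset$, and by Lemma~\ref{LZ1} with $\xi=1,\xi'=2$ the second is equivalent to the same closure being empty --- provided $B$ and $\Pi_0[A]\times\Pi_1[A]$ are disjoint, which is the one delicate point.

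For the main implication, assume $A$ is not separable from $B$ by a $\close\times\gdelta$ set; after relativizing we may take $X,Y$ recursively presented and $A,B$ to be $\Sigma^1_1$. If $B\cap\big(\Pi_0[A]\times\Pi_1[A]\big)\ne\emptyset$ --- the situation of Proposition~\ref{Borel} --- choose $(x_0,y_0)$ in this intersection; then the constant maps $f\equiv x_0\colon\bbX\to X$ and $g\equiv y_0\colon\bbY\to Y$ satisfy alternative~2 outright, since $f^{-1}(\Pi_0[A])=\bbX$, $g^{-1}(\Pi_1[A])=\bbY$ and $(f\times g)$ sends everything into $B$. Otherwise $B$ and $\Pi_0[A]\times\Pi_1[A]$ are disjoint analytic sets; the bridge shows $B$ is not separable from $\Pi_0[A]\times\Pi_1[A]$ by a $(\open\times\fsigma)_\sigma$ set, so applying the ``exactly one'' dichotomy of Theorem~\ref{Thm1} for the explicit example of Proposition~\ref{Pro4}(2) (whose $F$ is countable, hence meager in $\cantor$) to the pair $\big(X,Y,B,\Pi_0[A]\times\Pi_1[A]\big)$ forces its second alternative: there are continuous $f\colon\bbX\to X$, $g\colon\bbY\to Y$ with $\bbB\subseteq(f\times g)^{-1}(B)$ and $\bbA\subseteq(f\times g)^{-1}\big(\Pi_0[A]\times\Pi_1[A]\big)=f^{-1}(\Pi_0[A])\times g^{-1}(\Pi_1[A])$; projecting the last inclusion onto each coordinate yields $\Pi_0[\bbA]\subseteq f^{-1}(\Pi_0[A])$ and $\Pi_1[\bbA]\subseteq g^{-1}(\Pi_1[A])$, as wanted.

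For the exclusivity, suppose $C\times D$ separates $A$ from $B$ with $C\in\close(X)$, $D\in\gdelta(Y)$, while $f,g$ witness alternative~2. From $A\subseteq C\times D$ we get $\Pi_0[A]\subseteq C$ and $\Pi_1[A]\subseteq D$, hence $\bbA\subseteq\Pi_0[\bbA]\times\Pi_1[\bbA]\subseteq f^{-1}(C)\times g^{-1}(D)$, while $\bbB\subseteq(f\times g)^{-1}(B)$ is disjoint from $f^{-1}(C)\times g^{-1}(D)$ because $(C\times D)\cap B=\emptyset$. Thus $f^{-1}(C)\times g^{-1}(D)$ is a $\close\times\gdelta$ set containing $\bbA$ and missing $\bbB$, so its complement $\big(\neg f^{-1}(C)\times\bbY\big)\cup\big(\bbX\times\neg g^{-1}(D)\big)$ --- a union of two $\open\times\fsigma$ sets --- separates $\bbB$ from $\bbA$. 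This contradicts Proposition~\ref{Pro4}: for $\bbX,\bbY,\bbA,\bbB$ the second alternative of Theorem~\ref{Thm1} holds via the identity, so by the ``exactly one'' clause $\bbB$ is not separable from $\bbA$ by a $(\open\times\fsigma)_\sigma$ set.

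The step I expect to be the main obstacle is exactly the possible non-disjointness of $B$ and $\Pi_0[A]\times\Pi_1[A]$: both Lemma~\ref{LZ1} and Theorem~\ref{Thm1} are stated for disjoint pairs, so one must split the argument according to whether this rectangle already meets $B$ (handled by constant maps, as in Proposition~\ref{Borel}) or not (handled by Proposition~\ref{Pro4}). Beyond that, the only things to check are that the appeals to the effective topological lemmas are legitimate after a single relativization, and that it is the concrete $\bbX,\bbY,\bbA,\bbB$ of Proposition~\ref{Pro4}(2) --- in particular with countable projections --- that appears throughout.
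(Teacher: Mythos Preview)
Your proof is correct and follows the route the paper intends: Corollary~\ref{cor1} is stated without proof, the implicit argument being the analogue of Lemma~\ref{Lem1} with Theorem~\ref{Thm1} and Proposition~\ref{Pro4} playing the role of Conjecture~\ref{LZ2}. Your write-up makes this explicit and adds one refinement the paper glosses over, namely the case split on whether $B$ meets $\Pi_0[A]\times\Pi_1[A]$; this is harmless extra care, since the construction in the proof of Theorem~\ref{Thm1} never actually uses the disjointness hypothesis for the ``at least one'' direction, so the paper's one-line appeal would go through as well.
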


If $\bbX',\bbY',\bbA',\bbB'$ are defined exactly as in the previous section, then we copy the same proof to get the following Corollary.

\begin{Cor}\label{cor2} Let $X,Y$ be Polish spaces, and $A,B$ be disjoint analytic subsets of $X\times Y$. Exactly one of the following holds:
\begin{enumerate}
	\item $A$ is separable from $B$ by a $\close\times\gdelta$ set,
	\item $(\bbX',\bbY',\bbA',\bbB')\leq (X,Y,A,B)$.
\end{enumerate} 
\end{Cor}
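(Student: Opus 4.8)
The plan is to mimic exactly the passage from Lemma \ref{Lem1} to Theorem \ref{Thm2}, replacing the role of Conjecture \ref{LZ2} (for $\mathbf\Pi^0_\xi\times\mathbf\Pi^0_\xi$) with Corollary \ref{cor1} (for $\close\times\gdelta$). First I would record the analogue of the ``exactly'' part: if $C\times D$ separates $A$ from $B$ with $C\in\close$ and $D\in\gdelta$, then $f^{-1}(C)\times g^{-1}(D)$ separates $\Pi_0[\bbA']\times\Pi_1[\bbA']$ from $\bbB'$; since $\bbA'\subseteq \Pi_0[\bbA']\times\Pi_1[\bbA']$ this would in particular separate $\bbA'$ from $\bbB'$. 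But the $\bbX',\bbY',\bbA',\bbB'$ here come from the construction in Proposition \ref{Pro4} together with the $\oplus$-padding of the previous section, applied to $\bbX_F,\bbY_F,\bbB_F,\bbA_F$ with $F$ \emph{meager}; by the ``moreover'' clause of Theorem \ref{Thm1}, $\bbB_F$ is not separable from $\bbA_F$ by a $(\open\times\fsigma)_\sigma$ set, hence (passing to complements) $\bbA_F$ is not separable from $\bbB_F$ by a $\close\times\gdelta$ set. Since the canonical embeddings $\bbX_F\hookrightarrow\bbX'$, $\bbY_F\hookrightarrow\bbY'$ witness a reduction, $\bbA'$ is not separable from $\bbB'$ by a $\close\times\gdelta$ set either, which gives the exclusivity.

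For the substantive direction, assume $A$ is not separable from $B$ by a $\close\times\gdelta$ set. By relativization I reduce to the effective setting (recursively presented $X,Y$ and $\Sigma^1_1$ sets $A,B$). Then Corollary \ref{cor1} supplies auxiliary continuous $f':\bbX_F\to X$, $g':\bbY_F\to Y$ with $\Pi_0[\bbA_F]\subseteq f'^{-1}(\Pi_0[A])$, $\Pi_1[\bbA_F]\subseteq g'^{-1}(\Pi_1[A])$, and $\bbB_F\subseteq (f'\times g')^{-1}(B)$. Here I must be careful that the $\bbX,\bbY,\bbA,\bbB$ of Proposition \ref{Pro4} are a sub-instance of $\bbX_F,\bbY_F,\bbA_F,\bbB_F$, so Corollary \ref{cor1} applies to them directly. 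Now enumerate $\Pi_0[\bbA']=\{\alpha_n\}$, $\Pi_1[\bbA']=\{\beta_n\}$ (countable by the choice in Proposition \ref{Pro4}(2)); since $f'(\alpha_n)\in\Pi_0[A]$ there is $y_n\in Y$ with $(f'(\alpha_n),y_n)\in A$, and symmetrically $x_n\in X$ with $(x_n,g'(\beta_n))\in A$. Define $f:\bbX'\to X$ by $f(\overline\alpha)=f'(\alpha)$ on the $\bbX$-copy and $f(\overline n)=x_n$ on the $\omega$-copy, and $g:\bbY'\to Y$ by $g(\overline\beta)=g'(\beta)$ and $g(\overline n)=y_n$. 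These are continuous because each copy is clopen.

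Finally I would check $f\times g$ is a reduction of $(\bbX',\bbY',\bbA',\bbB')$ into $(X,Y,A,B)$: for a point $(\overline{\alpha_n},\overline n)\in\bbA'$ we have $(f\times g)(\overline{\alpha_n},\overline n)=(f'(\alpha_n),y_n)\in A$ by the choice of $y_n$, and symmetrically for $(\overline n,\overline{\beta_n})$; for $(\overline\alpha,\overline\beta)\in\bbB'$ we have $(\alpha,\beta)\in\bbB_F$, so $(f\times g)(\overline\alpha,\overline\beta)=(f'(\alpha),g'(\beta))\in B$. This is the proof of Theorem \ref{Thm2} verbatim with ``$\mathbf\Pi^0_\xi\times\mathbf\Pi^0_\xi$'' replaced by ``$\close\times\gdelta$'' and Lemma \ref{Lem1} replaced by Corollary \ref{cor1}, which is precisely what the sentence preceding the statement asserts. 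The only genuine point requiring attention — the ``main obstacle,'' though it is mild — is making sure the instance produced in Proposition \ref{Pro4}(2) does have countable projections \emph{and} still yields non-separability after the $\oplus$-padding, so that both halves of the dichotomy go through; this is exactly the bookkeeping that was already carried out in the $\xi\in\{1,2\}$ case, so I would simply invoke it.
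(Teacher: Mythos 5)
Your proposal is correct and is essentially the paper's own argument: the paper proves Corollary \ref{cor2} precisely by repeating the proof of Theorem \ref{Thm2} with Corollary \ref{cor1} in place of Lemma \ref{Lem1}, applied to the countable-projection example of Proposition \ref{Pro4} and its $\oplus\omega$-padding. One small caution: the canonical embeddings do \emph{not} give a $\leq$-reduction of $(\bbX_F,\bbY_F,\bbA_F,\bbB_F)$ (nor of the shrunk $\bbA,\bbB$) into $(\bbX',\bbY',\bbA',\bbB')$, since $\bbA$ is not mapped into $\bbA'$; the exclusivity is instead obtained, exactly as in Theorem \ref{Thm2}, by pulling a separating $\close\times\gdelta$ rectangle for $(\bbA',\bbB')$ back through the embeddings and noting that its sides must contain the projections of $\bbA'$, hence those of $\bbA$, which contradicts the non-separability of $\bbA$ from $\bbB$ guaranteed by Proposition \ref{Pro4} (equivalently, by the meager case of Theorem \ref{Thm1} after passing to complements).
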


So our antichain basis $\mathcal{C}$ is defined by the following objects.
\begin{align*}
\bbX':=&\bbX\oplus\omega,\\
\bbY':=&\bbY\oplus\omega, \\ 
\bbB':=&\big\{(\overline{\alpha},\overline{\alpha})\in\bbX'\times\bbY'| \exists^\infty j (\alpha(j)\neq 0)\big\}, \\
\bbA':=&\big\{(\overline{\Psi(m)20^\infty},\overline{m})\in\bbX'\times\bbY'| m\in\omega\}\cup \\
&\{(\overline{m},\overline{\Psi(m)0^\infty})\in\bbX'\times\bbY'|m\in\omega\big\}.
\end{align*}

\section{Separation by a $\mathbf\Pi^0_2\times\mathbf\Sigma^0_1$ set}

This is an interesting case, as it combines must of the methods used in the previous sections. First, we need a  lemma, reminiscent of Theorem \ref{Thm1}.

\begin{lem} \label{L4} Let $\mathbb{W}$ be a zero-dimensional Polish space, $S\in\fsigma$, $X,Y$ be recursively presented spaces, and $C\subseteq X$, $U\subseteq Y$ and $B\subseteq X\times Y$, $\Sigma^1_1$ sets. If 
$(\overline{C}^\taudos\times U)\cap B\neq \emptyset$, 
then there are continuous functions $f:\mathbb{W}\to X$ and $g:\mathbb{W}\backslash S \to U$ such that:
\[S\subseteq f^{-1}(C)\]
\[\Delta(\mathbb{W}\backslash S)\subseteq (f\times g)^{-1}(B).\]
\end{lem}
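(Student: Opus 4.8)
This is a ``baby'' version of Theorem~\ref{Thm1}, and I would prove it by the same Gandy--Harrington fusion technique, but simplified to a single copy of $\mathbb{W}$ on the domain side. The idea is to use the hypothesis $(\overline{C}^{\taudos}\times U)\cap B\neq\emptyset$ to start a Cantor-scheme construction indexed by the tree $\{s\in 2^{<\omega}\mid s=\emptyset\lor s(|s|-1)=1\}$ (the same index tree $D$ that governs the $\fsigma$ set $S$), so that along branches that hit the $\close$ pieces of $S$ we stay in $B$, while on the tail branch $\alpha\in\mathbb{W}\backslash S$ we converge to a point of $\overline{C}^{\taudos}$ whose partner lies in $B$. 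Concretely, write $S=\bigcup_n S_n$ with $S_n$ closed increasing and dense union; for $s\in 2^{<\omega}$ let $n_s=\min\{n\mid S_n\cap N_s\neq\emptyset\}$ and $D=\{s\mid s=\emptyset\lor n_s\neq n_{s^m}\}$, exactly as before.

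**Construction.** Fix $(x_\emptyset,y_\emptyset)\in(\overline{C}^{\taudos}\times U)\cap B\cap(\Omega\times\Omega)$ (possible since this set is $\Sigma^1_1$ and nonempty, hence meets $\Omega$). By induction on length build, for $t\in\mathbb{W}$'s defining tree (I will work with $\mathbb{W}$ zero-dimensional, so with basic clopen sets as in the statement) and $s\in 2^{<\omega}$: points $x_t\in X$, $y_s\in U$; $\semirecursive$ neighborhoods $X_t$, $Y_s$ of vanishing diameter with the usual nesting $\overline{X_{s\varepsilon}}\subseteq X_s$ etc.; and, when $s\in D$, a $\Sigma^1_1$ set $V_s\subseteq B\cap\Omega\cap(X_s\times Y_s)$ of vanishing $GH$-diameter, nested along $D$. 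The key inductive move, at a node $s\in D$ with $s\varepsilon\notin D$, is: we have $(x_s,y_s)\in(X_s\times Y_s)$ with $x_s\in\overline{\Pi_0[V_s]}^{\taudos}$ (or rather $x_s\in\overline{C}^{\taudos}$ at the root, and we maintain a running $\taudos$-closure condition on the first coordinate), so $(X_s\times Y_s)\cap(\overline{C}^{\taudos}\times Y)\cap B\neq\emptyset$ in the sense needed to extract a point lying in $B$ with first coordinate that can be $\taudos$-approximated inside $C$; at nodes extending into $D$ we shrink $V_s$. This mirrors conditions \ref{c1}--\ref{c6} of Theorem~\ref{Thm1} but with the $A$-part deleted: instead, the role of $A$ is played by $C\times U$ (we only need the first coordinate to land in $C$ and the second in $U$), and everything ``diagonal'' lands in $B$.

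**Extracting the maps.** For $\alpha\in\mathbb{W}$: if $(n_{\alpha|_i})_i$ does \emph{not} diverge, i.e.\ $\alpha\in S$, then $\alpha$ is eventually in some $S_{n}\cap N_s$ and we put $f(\alpha):=\lim x_{\alpha|_i}$, which by the maintained condition lies in $C$; set $g$ undefined there. If $\alpha\in\mathbb{W}\backslash S$, then $n_{\alpha|_i}$ changes infinitely often, so $\alpha$ passes through infinitely many nodes of $D$, and we set $f(\alpha):=\lim x_{(\alpha|_i)^-}$, $g(\alpha):=\lim y_{(\alpha|_i)^-}$, with the pair $(f(\alpha),g(\alpha))=\bigcap_i V_{(\alpha|_i)^-}\in B$ since $GH$ refines the product topology and the $V$'s have vanishing $GH$-diameter; moreover $g(\alpha)\in\overline{U}$, and by shrinking inside $U$ at each step $g(\alpha)\in U$. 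Continuity of $f$ on the clopen-ish locus where some $\alpha(i)$ triggers a branch is routine; continuity at points with $n_{\alpha|_i}$ divergent is the delicate point and is handled exactly as in Theorem~\ref{Thm1}: given a $D$-node $s\sqsubseteq\alpha$ with $X_s$ small, nearby $\alpha_k$ eventually pass $s$ and, whether or not $n_{\alpha_k|_i}$ changes infinitely often afterwards, land in $X_s$ by the nesting conditions. Then $S\subseteq f^{-1}(C)$ and $\Delta(\mathbb{W}\backslash S)\subseteq(f\times g)^{-1}(B)$ by construction.

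**Main obstacle.** The one genuinely delicate point is keeping the first-coordinate hypothesis alive through the recursion: at the root we only know $x_\emptyset\in\overline{C}^{\taudos}$, not $x_\emptyset\in C$, and we need every branch $\alpha\in S$ to deliver $f(\alpha)\in C$. The resolution is that along such a branch $f(\alpha)$ is a $\taudos$-limit of the $x_t$'s, and the inductive choice at each $D$-node must be of the form: pick the next pair in $(X_s\times Y_s)\cap B$ with first coordinate still in $\overline{C}^{\taudos}$ \emph{and} inside a prescribed $\taudos$-basic neighborhood shrinking to a point of $C$ --- this uses Theorem~\ref{LO1} (the $\taudos$-closure of $C$, a $\Sigma^1_1$ set, behaves well) together with the fact that $\overline{C}^{\taudos}\cap X_s$ meets $C$ densely in the $\taudos$ sense precisely when $(X_s\times Y_s)\cap(\overline{C}^{\taudos}\times U)\cap B\neq\emptyset$, which we inherit from the previous stage. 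Everything else --- well-definedness, the vanishing-diameter bookkeeping, the use of $\Omega$ being $GH$-Polish and $\Sigma^1_1$-dense --- is verbatim as in the proof of Theorem~\ref{Thm1}.
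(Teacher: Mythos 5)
Your overall skeleton is the right one and matches the paper: reduce to $U=Y$ (or keep $U$ along), set $N:=(\overline{C}^\taudos\times Y)\cap B$, build the tree $D$ from $n_s$, and run a Gandy--Harrington fusion producing $x_s$, $y_s$ (for $s\in D$), shrinking $\semirecursive$ neighborhoods $X_s,Y_s$ and $\Sigma^1_1$ sets $V_s\subseteq N\cap\Omega\cap(X_s\times Y_s)$ nested along $D$, with the diagonal of $\mathbb{W}\backslash S$ landing in $B$ via $GH$-convergence. The gap is exactly at the point you flag as the ``main obstacle'', and your proposed resolution does not work as stated. You keep the first coordinates only in $\overline{C}^\taudos$ and hope that, for $\alpha\in S$, $f(\alpha)$ lands in $C$ as a limit ``inside prescribed $\taudos$-basic neighborhoods shrinking to a point of $C$''. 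But the limit defining $f(\alpha)$ is taken in the original topology, and an ordinary limit of points of $\overline{C}^\taudos$ has no reason to belong to $C$; nor is there any completeness or vanishing-diameter machinery for $\taudos$ (or a second fusion with $\Sigma^1_1$ subsets of $C$ and $GH$-diameter control on the $X$ side) set up in your sketch that would force the limit into $C$. Moreover you cannot know in advance which point of $C$ the neighborhoods are supposed to ``shrink to'', so the prescription is circular.

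The paper's mechanism is simpler and avoids limits altogether: at an exit node ($s\in D$, $s\varepsilon\notin D$) one has $x_s\in\Pi_0[V_s]\subseteq\Pi_0[N]\subseteq\overline{C}^\taudos$, and the set $X_s\cap\overline{\Pi_0[V_s]}$ (ordinary closure of a $\Sigma^1_1$ set, hence a closed $\Sigma^1_1$ set) is $\taudos$-open and contains $x_s$; therefore it meets $C$, and one picks $x_{s\varepsilon}$ to be an actual point of $C\cap X_s\cap\overline{\Pi_0[V_s]}$ and freezes it while outside $D$ (conditions (5) and (6) in the paper's proof). Since $n_{\alpha|_i}$ stabilizes for $\alpha\in S$, $f(\alpha)$ is eventually equal to such a frozen point, so $S\subseteq f^{-1}(C)$ is immediate, no $\taudos$-approximation being needed. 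The extra requirement $x_{s\varepsilon}\in\overline{\Pi_0[V_s]}$ (ordinary closure, not $\taudos$) is also what your sketch omits and what makes re-entry into $D$ possible: when the branch later returns to $D$ at a node above $s$, the current neighborhood of the frozen point still meets $\Pi_0[V_{s^-}]$, so a new pair in $V_{s^-}$ compatible with the $X$-side nesting can be chosen. Your ``key inductive move'' (extracting a point of $B$ whose first coordinate is merely in $\overline{C}^\taudos$) keeps the fusion alive but provably cannot give $S\subseteq f^{-1}(C)$ without the substitute step above, so as written the proposal has a genuine gap.
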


\begin{proof}

Note we can assume $U=Y$, as otherwise, one can take $B\cap (X\times U)$ instead of $B$. So let $N:=(\overline{C}^\taudos\times Y)\cap B$.

Let $\{C_n\}_{n\in\omega}$ be a 1-disjoint family, such that $S=\cup_{n\in\omega} C_n$.

Fix a basis of clopen subsets of $\mathbb{W}$, $\{N_s|s\in 2^{<\omega}\}$ such that $N_s\subseteq N_t$ if $t\sqsubseteq s$, and $N_s\cap N_t=\emptyset$ if $s$ is not compatible with $t$. In particular, for each $x\in\mathbb{W}$, there is an unique $\alpha\in \cantor$ such that $x\in \cap N_{\alpha|k}$.

As in the proof of Theorem \ref{Thm1} define $n_s:=\mbox{min}\{n| N_s\cap C_n\neq \emptyset\}$,  $D:=\{s\in 2^{<\omega}|s=\emptyset\lor n_s\neq n_{s^m}\}$, and $s^-:=s|_{\mbox{max}\{n< |s| | s_{|n}\in D\}}$ if $s\neq\emptyset$, and $s^-:=s$ if $s=\emptyset$.

For each $s\in 2^{<\omega}$, construct the following:
\begin{enumerate}
	\item[i.] $x_s\in X$, and, for each $s\in D$, $y_s\in Y$,
	\item[ii.] $\semirecursive$ sets $X_s\subseteq X$ and, for each $s\in D$, $Y_s\subseteq Y$,
	\item[iii.] for each $s\in D$, a $\Sigma^1_1$ set $V_s\subseteq X\times Y$.
\end{enumerate} 

We ask these sets to satisfy the following conditions:
\begin{enumerate}
	\item\label{cc1} $x_s\in X_s$, $y_t\in Y_t$, and $(x_s,y_s)\in V_s$
	\item\label{cc2} $\mbox{diam}(X_s)<2^{-|s|}$, $\mbox{diam}(Y_s)<2^{-|s|}$, and $\mbox{diam}_{GH}(V_{s})<2^{-|s|}$
	\item\label{cc3} $\left\{\begin{array}{ll}
	\overline{X_{s\varepsilon}}\subseteq X_s & \mbox{if } \varepsilon\in 2,\\
	\overline{Y_s}\subseteq Y_{s^-} &\mbox{if } s\in D\backslash\{\emptyset\}\\
	V_{s}\subseteq V_{s^-} &\mbox{if } s\in D\end{array}\right.$
	\item\label{cc4} $V_{s}\subseteq N\cap\Omega\cap(X_{s}\times Y_{s})$
	\item\label{cc5} if $s\in D$ and $s\varepsilon\notin D$, then $x_{s\varepsilon}\in C\cap \overline{\Pi_0[V_s]}\cap X_s$
	\item\label{cc6} $x_{s}=x_{s^m}$ if  $s\notin D \land s^m\notin D$.
\end{enumerate}

Suppose that these objects have been constructed. If $\alpha\in \mathbb{W}$, define
\[\{f(\alpha)\} :=\bigcap_{\alpha\in N_s} \overline{X_s}.\]

And if $\alpha\notin \cup C_n$, then there is an increasing sequence $(s_k)$ in $D$ such that $\alpha\in\cap_{k\in\omega} N_{s_k}$, so define, by condition \ref{cc3}, \[\{g(\alpha)\}:=\bigcap_{k\in \omega} \overline{Y_{s_k}},\]
\[F(\alpha):=\bigcap_{k\in \omega} V_{s_k}.\]

Note that these functions are continuous, $x_{s_k}\to f(\alpha)$ and $y_{s_k}\to g(\alpha)$ for any strictly increasing sequence $(s_k)$ (with $s_k\in D$ in the second case) such that $\alpha\in\cap_{k\in\omega} N_{s_k}$. 

Note that, by condition \ref{cc6}, if $\alpha\in C_n$, and $s_k$ is an increasing sequence such that $\alpha\in N_{s_k}$, then $f(\alpha)=x_{s_k}\in C$, for $k$ big enough.

If $\alpha\notin \cup C_n$, then $(x_{s_k},y_{s_k})\to F(\alpha)$ in the Gandy-Harrington topology, so in the usual topology as well. In particular $(f(\alpha),g(\alpha))=F(\alpha)\in V_{s_k}\subseteq B$.

To construct the points, we proceed by induction of the length of the sequences, as usual. Note there is $(x_\emptyset,y_\emptyset)\in N\cap \Omega$, since $N$ is a non-empty $\Sigma^1_1$ set. Choose $\semirecursive$ neighborhoods  $X_\emptyset,Y_\emptyset$ of small diameter. We can also choose $V_\emptyset$ a $\Sigma^1_1$ neighborhood of $(x_\emptyset,y_\emptyset)$ of small diameter contained in $N\cap (X_\emptyset\times Y_\emptyset)\cap \Omega$.

Suppose that everything is constructed for $s$ such that $|s|\leq l$. We proceed by cases.

Suppose first that $s\in D$ and $\varepsilon\in 2$. If $s\varepsilon\in D$, define $(x_{s\varepsilon},y_{s\varepsilon}):=(x_s,y_s)$. We only need to shrink the neighborhoods to respective $X_{s\varepsilon},Y_{s\varepsilon}$ and $V_{s\varepsilon}$, and all conditions will be met. Note that $x_s\in  \overline{C}^{\taudos}\cap X_s\cap\overline{\Pi_0[V_s]}$. Since $X_s\cap\overline{\Pi_0[V_s]}$ is $\taudos$-open, there is $x\in C\cap X_s\cap\overline{\Pi_0[V_s]}\cap \Omega$. So, if $s\varepsilon\notin D$, define $x_{s\varepsilon}=x$.  Shrink the neighborhood $X_s$ to one of good diameter, and they will satisfy the relevant conditions.

Suppose now that $s\notin D$. If $s\varepsilon\notin D$, by condition \ref{cc6}, we must define $x_{s\varepsilon}:=x_s$. Again, shrink the neighborhood, and they will satisfy the relevant conditions. Note there is $\varepsilon'\in 2$, such that $s^-\varepsilon'\sqsubseteq s$. It is clear by applying condition \ref{cc6} enough times that $x_{s^-\varepsilon'}=x_s$. In particular, $x_s\in X_s\cap \overline{\Pi_0[V_{s^-}]}$. Thus, there is $x\in X_s\cap \Pi_0[V_{s^-}]$, and $y\in Y_{s^-}$ such that $(x,y)\in V_{s^-}$. If $s\varepsilon\in D$, note $(s\varepsilon)^-=s-$, so define $(x_{s\varepsilon},y_{s\varepsilon}):=(x,y)$. Find neighborhood $X_{s\varepsilon}$, $Y_{s\varepsilon}$, $V_{s\varepsilon}$ such that $V_{s\varepsilon}\subseteq (X_{s\varepsilon}\times Y_{s\varepsilon})\cap V_{s^-}\subseteq (X_{s}\times Y_{s^-})\cap V_{s^-}$. This will clearly satisfy our conditions. 

Thus we can construct all our objects.
\end{proof}

Now, consider a zero-dimensional Polish space $\mathbb{W}$, and let $\{F_{n,\varepsilon}\}$ be a left branching scheme. In each $F_{n,1}$, consider a $1$-disjoint family $\{C^n_m\}_{m\in\omega}$. Fix a dense countable $\{\alpha^{n,m}_k|k\in\omega\}$ subset of $C^n_m$.  Define the following zero-dimensional Polish spaces:

\[\bbX:= \big(\mathbb{W}\backslash(\cap_{n\in\omega}F_{n,0})\big)\oplus\omega,\]
\[\bbY:= \big(\mathbb{W}\backslash (\cup_{n,m\in\omega} C^n_m)\big)\oplus \omega.\]

If $<,,>$ is a bijection from $\omega^3$ onto $\omega$, then define, in $\bbX\times\bbY$,

\[\bbA:=\big\{(\overline{0},\overline{\alpha})|\alpha\in \cap_{n\in\omega} F_{n,0}\big\}\bigcup\big\{(\overline{\alpha^{n,m}_k},\overline{<n,m,k>})|n,m,k\in\omega\big\},\]
\[\bbB:=\big\{(\overline{\alpha},\overline{\alpha})\in\bbX\times\bbY| \alpha\notin (\cap_{n\in\omega} F_{n,0})\cup(\bigcup_{n,m\in\omega } C^n_m)\}.\] 

\begin{Pro} Let $X,Y$ be Polish spaces, and $A,B$ be disjoint analytic subsets of $X\times Y$. At least one of the following must hold:
\begin{enumerate}
	\item $A$ est separable from $B$ by a $\gdelta\times\open$ set,
	\item $(\bbX,\bbY,\bbA,\bbB)\leq (X,Y,A,B)$.
	
	If, in addition, $\{F_{n,e}\}$ converges and $\cup_{m\in\omega}C^n_m$ is dense and meager in $F_{n,1}$, then at most one of the previous can hold.
\end{enumerate}
\end{Pro}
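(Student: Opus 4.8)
The plan is to follow the now-familiar two-part scheme used throughout the paper: first reduce the classical statement to an effective one, then run a Gandy-Harrington construction to build the reduction maps, and finally, under the extra hypotheses, verify directly that the example $(\bbX,\bbY,\bbA,\bbB)$ is not separable by a $\gdelta\times\open$ set. So first I would relativize to assume $X,Y$ are recursively presented and $A,B$ are $\Sigma^1_1$. Assuming $A$ is not separable from $B$ by a $\gdelta\times\open$ set, Lemma \ref{L3}(\ref{l.2}) yields some $y\in\Pi_1[A]$ such that for \emph{every} open neighborhood $V$ of $y$, $(\overline{\Pi_0[A]}^{\tau_2}\times V)\cap B\neq\emptyset$. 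Pick $x$ with $(x,y)\in A$. This $y$ plays the role of the "bad point" on the second coordinate, analogous to the role of $N$ in Theorem \ref{Thm1} and of the set $N$ in Lemma \ref{L4}.

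Next I would set up the combinatorics on $\bbY$. Taking a decreasing open neighborhood basis $(V_k)$ at $y$, for each $k$ I get a witness in $(\overline{\Pi_0[A]}^{\tau_2}\times V_k)\cap B$; write its first coordinate as $C=\overline{\Pi_0[A]}^{\tau_2}$, so we are exactly in the position to invoke Lemma \ref{L4} with $\mathbb{W}:=F_{n,1}$-type pieces, $S:=\cup_m C^n_m$, and the relevant $\Sigma^1_1$ sets. The idea is: on the piece of $\bbX$ coming from $\mathbb{W}\setminus(\cap_n F_{n,0})$, decompose according to the left-branching scheme; on the part of $\bbY$ coming from $\mathbb{W}\setminus(\cup_{n,m}C^n_m)$ use the diagonal points of $\bbB$; and on the $\oplus\omega$ pieces place the auxiliary points $x_{\langle n,m,k\rangle}$ and $y$ supplied by the membership $\big(\hat f(\alpha^{n,m}_k), y^{n}\big)\in A$ or by $(x,y)\in A$. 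Concretely: the branch $\alpha\in\cap_n F_{n,0}$ maps under $g$ to $y$ (and $\overline{0}\in\bbX$ maps to $x$, giving the first family of points of $\bbA$ inside $A$); within $F_{n,1}$ Lemma \ref{L4} produces continuous $f_n:F_{n,1}\to X$ and $g_n:F_{n,1}\setminus(\cup_m C^n_m)\to Y$ with $\cup_m C^n_m\subseteq f_n^{-1}(C)$ and the diagonal of $F_{n,1}\setminus S$ inside $(f_n\times g_n)^{-1}(B)$; then since $f_n(\alpha^{n,m}_k)\in C=\overline{\Pi_0[A]}^{\tau_2}$ I need to perturb slightly — actually take $x^{n,m}_k\in\Pi_0[A]$ close to $f_n(\alpha^{n,m}_k)$ and a witness $y$-coordinate, which is where the density of the $\{\alpha^{n,m}_k\}$ and continuity are used, so that $(x^{n,m}_k,y^{n,m}_k)\in A$ and we can route $\overline{\langle n,m,k\rangle}\in\bbY$ to $y^{n,m}_k$. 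Glue all the $f_n$, the map on $\overline 0$, and the auxiliary $x$-points into a single $f:\bbX\to X$; glue all the $g_n$, the constant-$y$ map on $\cap_n F_{n,0}$, and the auxiliary $y$-points into $g:\bbY\to Y$. Continuity at the limit points $\cap_n F_{n,0}$ (on the $X$-side) and at the boundary of the $C^n_m$'s (on the $Y$-side) is checked exactly as in Theorem \ref{Op} and Theorem \ref{Thm1}: diameters shrink, and along a convergent sequence the relevant indices diverge.

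For the "at most one" direction, assume each scheme $\{F_{n,\varepsilon}\}$ converges and each $\cup_m C^n_m$ is dense and meager in $F_{n,1}$. Suppose toward a contradiction that $\bbA$ is separable from $\bbB$ by a $\gdelta\times\open$ set $G\times O$. Restricting to a fixed copy $F_{n,1}\subseteq\bbX$ and $F_{n,1}\subseteq\bbY$: the diagonal $\Delta(F_{n,1}\setminus(\cup_m C^n_m))\subseteq\bbB$ must miss $G\times O$, while each $\Delta$-point forces, via $O$ open and the meagerness-plus-density of $\cup_m C^n_m$, the existence of some $C^n_m$ that enters $O$ on the second coordinate and simultaneously enters $G$ on the first — mimicking the last paragraph of Theorem \ref{Thm1} where a Baire-category argument on $F\cap U_i\cap V_i$ locates a basic clopen set inside it, then produces a point of $\bbA$ in the rectangle. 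Here the role of "$F$ meager" is played by "$\cup_m C^n_m$ meager in $F_{n,1}$", and the role of $V_i\in\fsigma$ is played by $\neg O$ being closed (so $O$ being open is what we use directly); the convergence of the scheme is used to pin the diagonal point $\cap_l F_{l,0}$ inside $O\cap G$ as in Theorem \ref{Op}. Then a point of the form $(\overline{\alpha^{n,m}_k},\overline{\langle n,m,k\rangle})\in\bbA$ lands in $G\times O$, contradicting separation.

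The main obstacle I expect is the bookkeeping at the gluing seams, specifically verifying that the single global $f$ is continuous at points of $\cap_n F_{n,0}$ in $\bbX$: a sequence converging there can oscillate among infinitely many of the $F_{n,1}$-copies, and one must ensure the Lemma \ref{L4} data on successive copies is nested compatibly (shrinking a master neighborhood of $x$ at each stage) so that $f$ of such a sequence still tends to $x$. This is the same phenomenon handled in Theorems \ref{Op} and \ref{Thm1}, but combining it with the internal Gandy-Harrington construction of Lemma \ref{L4} — which itself already has a delicate continuity argument along the $D$-branches — means the two nested-diameter arguments must be interleaved carefully rather than invoked as black boxes. A secondary subtlety is the passage from $f_n(\alpha^{n,m}_k)\in\overline{\Pi_0[A]}^{\tau_2}$ to an honest point of $\Pi_0[A]$: one should instead run Lemma \ref{L4} so that the points $x_{s\varepsilon}$ with $s\varepsilon\notin D$ are taken in $\Pi_0[A]$ directly (the $\tau_2$-closure is only needed to \emph{find} them), and keep track of an $A$-witness for each, which is a minor adaptation of condition \ref{cc5} in that lemma.
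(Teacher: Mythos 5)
Your first half is essentially the paper's argument (Lemma \ref{L3} produces the point $y$, a decreasing neighborhood basis $(V_n)$ at $y$, and Lemma \ref{L4} is applied once inside each $F_{n,1}$), but two of your stated worries come from misreadings rather than real obstacles. Lemma \ref{L4} should be invoked with $C=\Pi_0[A]$ and $U=V_n$: its hypothesis only requires $(\overline{C}^{\taudos}\times U)\cap B\neq\emptyset$, and its conclusion already gives $f_n(\alpha)\in\Pi_0[A]$ for $\alpha\in\cup_m C^n_m$, so no ``perturbation'' and no adaptation of condition \ref{cc5} is needed; the $A$-witnesses $y_{n,m,k}$ are then chosen afterwards for the countably many $\alpha^{n,m}_k$. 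More importantly, your ``main obstacle'' does not exist: $\bbX=\big(\bbW\backslash(\cap_n F_{n,0})\big)\oplus\omega$, so the points of $\cap_n F_{n,0}$ are not in $\bbX$, each $F_{n,1}$ is clopen in the $\bbW$-part of $\bbX$, and $f$ is automatically continuous -- no interleaving of the Gandy--Harrington constructions across the copies is needed; Lemma \ref{L4} is used strictly as a black box. The one nontrivial continuity check is for $g$ at the points of $\cap_n F_{n,0}$, which do lie in $\bbY$, and it is exactly the choice $U=V_n$ (so that $g_n$ ranges in $V_n$, and the index $n_k$ diverges along a convergent sequence) that settles it; your sketch lets $g_n$ map into $Y$ and places the $Y$-side issue ``at the boundary of the $C^n_m$'s'', where there is nothing to check since those points are removed from $\bbY$.

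The genuine gap is in the ``at most one'' part. Separation means $\bbA\subseteq G\times O$ with $(G\times O)\cap\bbB=\emptyset$, so a contradiction must exhibit a point of $\bbB$ inside $G\times O$; your argument ends by exhibiting a point of $\bbA$ in $G\times O$, which is no contradiction at all, and on the way it speaks of some $C^n_m$ ``entering $O$ on the second coordinate'', which is impossible because the sets $C^n_m$ are removed from $\bbY$ (the second coordinates of the corresponding $\bbA$-points are the integers $\overline{\langle n,m,k\rangle}$). The category argument runs in the other direction: from $(\overline{0},\overline{w})\in\bbA\subseteq G\times O$, where $\{w\}=\cap_n F_{n,0}$, convergence of the scheme gives an $n$ with the trace of $O$ on the copy of $\bbW$ containing $F_{n,0}$; from $\{\overline{\alpha^{n+1,m}_k}\}\subseteq\Pi_0[\bbA]\subseteq G$ and density of $\cup_m C^{n+1}_m$ in $F_{n+1,1}$, the trace of $G$ on $F_{n+1,1}$ is a dense $\gdelta$ there; meagerness of $\cup_m C^{n+1}_m$ then yields $\alpha\in F_{n+1,1}\backslash\cup_m C^{n+1}_m$ with $\overline{\alpha}\in G$, and since $F_{n+1,1}\subseteq F_{n,0}$ also $\overline{\alpha}\in O$, so $(\overline{\alpha},\overline{\alpha})\in\bbB\cap(G\times O)$, the desired contradiction. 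As written, your version of this step fails; with the direction reversed as above it becomes the paper's proof.
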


\begin{proof}
Suppose that $A$ is not separable from $B$ by a $\gdelta\times\open$ set.  As usual, we can assume $X,Y$ to be recursively presented, and $A,B$ to be $\Sigma^1_1$. 

Then, by Lemma \ref{L3}, there is $y\in\Pi_1[A]$, such that, for every open neighborhood $V$ of $y$, $\big(\overline{\Pi_0[A]}^{\tau_2}\times V\big)\cap B\neq \emptyset$.

We fix a decreasing basis of neighborhoods $\{V_n\}$ at $y$ and $x\in X$ such that $(x,y)\in A$. Note that, for every $k\in\omega$, $N_n:=\big(\overline{\Pi_0[A]}^{\tau_2}\times V_n\big)\cap B$
is not empty.

By Lemma \ref{L4}, we can find, for each $V_n$, continuous functions $f_n:F_{n,1}\to X$ and $g_n:F_{n,1}\backslash(\cup_{m\in\omega} C^n_m)\to V_n$ such that
\begin{enumerate}
	\item $f_n(\alpha)\in \Pi_0[A]$ if $\alpha\in \cup_{m\in\omega} C^n_m$,
	\item $(f_n(\alpha),g_n(\alpha))\in B$, if $\alpha\in F_{n,1}\backslash (\cup_{m\in \omega} C^n_m)$.
\end{enumerate}
So, for each $\alpha^{n,m}_k$, there is a $y_{n,m,k}$ such that $(f_n(\alpha^{n,m}_k),y_{n,m,k})\in A$. Define $f:\bbX\to X$ and $g:\bbY\to Y$ by:

\[f(\varepsilon,\alpha)=\left\{\begin{array}{cc}
			x & \mbox{if }  \varepsilon=1,\\
			f_n(\alpha) & \mbox{if } \varepsilon=0 \;\&\; \alpha\in F_{n,1}.\\
\end{array}\right.\]

\[g(\varepsilon,\alpha)=\left\{\begin{array}{cc}
			y_{n,m,k} & \mbox{if }  \varepsilon=1 \;\&\; \alpha=<n,m,k>,\\
			g_n(\alpha) & \mbox{if } \varepsilon=0 \;\&\; \alpha\in F_{n,1},\\
			y & \mbox{if } \varepsilon=0 \;\&\; \alpha\in \cap F_{n,0}.
\end{array}\right.\]

The continuity of $f$ is clear, since $\{F_{n,1}\}$ is a partition of $\bbW\backslash(\cap_{n\in\omega} F_{n,0})$ into clopen sets. For $g$, the continuity must only be checked in $\cap F_{n,0}$. If $\alpha_k\to\alpha$, then we can assume that $\alpha_k\in F_{n_k,1}$ for some $n_k$ which must increase as $k$ tends to infinity. In that case, note that $g(\overline{\alpha_k}) \in V_{n_k}$, so that it converges to $y=g(\overline{\alpha})$. 

It is routine to check that $f\times g$ is as a reduction.

To show the second part, we show $\bbA$ is not separable from $\bbB$. So suppose that $\bbA\subseteq U\times V$ with $U\in\gdelta(\bbX)$ and $V\in\open(\bbY)$. We note that $\cap F_{n,0} =\{y\}$, and that $\{F_{n,0}\}$ is a family of clopen neighborhoods of $y$. Therefore, there is $n\in\omega$ such that  $F_{n,0}\subseteq \pi_0[V]$, where $\pi_0$ is the canonical projection from $\mathbb{W}\oplus\omega$ to $\mathbb{W}$. 

Also, note that $\cup C^{n+1}_m$ is a dense set in $F_{n+1,1}\cap \pi_0[U]$. Then $F_{n+1,1}\cap \pi_0[U]$ is a $\gdelta$ dense set in $F_{n+1,1}$. Since $\cup C^{n+1}_m$ is meager in $F_{n+1,1}$, there is $\alpha\in (F_{n+1,1}\backslash\cup_{m\in\omega} C^{n+1}_m)\cap \pi_0[U]$. In particular $(\overline{\alpha},\overline{\alpha})\in (U\times V)\cap B$.
\end{proof}

We are ready to give an example of a basis for this case. Take $\mathbb{W}=\cantor$, $F_{n,\varepsilon}=N_{0^n\varepsilon}$. Let $(s_m)_{m\in\omega}$ be an enumeration of all finite sequences in $2^{<\omega}$ which do not end with $0$. Let $C^n_m=\{0^n1s_m0^\infty\}$, so that $\alpha^{n,m}_k=0^n1s_m0^\infty$. It is easy to see these sets satisfy everything that is needed. In particular, our examples are	
\[\bbA:=\big\{(\overline{0},\overline{0^\infty})\big\}\bigcup\big\{(\overline{0^n1s_m0^\infty},\overline{<n,m,k>})|n,m,k\in\omega\big\},\]
\[\bbB:=\big\{(\overline{\alpha},\overline{\alpha})| \exists^\infty i (\alpha(i)=1)\}.\]

\section{Separation by a $\close\times\fsigma$ set}

Let $\bbW$ be a zero-dimensional Polish space, $\{F_n|n\in\omega\}$ be a $0$-disjoint partition of $\bbW$, $\bbP=\{\alpha_n|n\in\omega\}$ be a countable dense and co-dense subset in $F_0$, and $\{F^{n+1}_{m,\varepsilon}\}$ a left-branching scheme in $F_{n+1}$. Let $\bbX:=\bbW\backslash \bbP$ and $\bbY:=\bbW\backslash (\cup_{n\in\omega}\cap_{m\in\omega}F^{n+1}_{m,0})$. Define the following sets in $\bbX\times\bbY$:
\[\bbA:=\Delta(F_0\backslash \bbP)\cup\Delta\big(\cup_{n,m\in\omega}F^{n+1}_{m,1}\big),\]
\[\bbB:=\{(\alpha,\alpha_n)| (\alpha\in\cap_{m\in\omega} F^{n+1}_{m,0}) \land (n\in\omega)\}.\]

\begin{thm} Let $X,Y$ be Polish spaces and $A,B$ be disjoint analytic subsets of $X\times Y$. At least one of the following holds:
\begin{enumerate}
	\item $A$ is separable from $B$ by a $\close\times\fsigma$ set,
	\item $(\bbX,\bbY,\bbA,\bbB)\leq (X,Y,A,B)$.
	
	If, in addition, all left-branching schemes converge, at most one of the previous holds. 
\end{enumerate}
\end{thm}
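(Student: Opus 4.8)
The plan is to run the argument of the previous section, using Lemma~\ref{L3}(\ref{l1}) with $\xi=2$ in place of Lemma~\ref{L3}(\ref{l.2}), together with a ``transposed'' version of Lemma~\ref{L4}. After relativizing, assume $X,Y$ recursively presented and $A,B\in\Sigma^1_1$, and suppose $A$ is not separable from $B$ by a $\close\times\fsigma=\mathbf\Pi^0_1\times\mathbf\Sigma^0_2$ set. By Lemma~\ref{L3}(\ref{l1}) (with $\xi=2$) this means there is $y^*\in\Pi_1[A]$ such that every $\tau_2$-open neighborhood of $y^*$ meets $\Pi_1[E]$, where $E:=(\overline{\Pi_0[A]}\times Y)\cap B$ is $\Sigma^1_1$; equivalently, $y^*\in\overline{\Pi_1[E]}^{\tau_2}$. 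Fix $x^*$ with $(x^*,y^*)\in A$.

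Next I would apply Lemma~\ref{L4} with the two factors swapped and with the set in its ``$B$'' slot taken to be the transpose $A^{\mathsf t}=\{(\beta,\alpha):(\alpha,\beta)\in A\}$ (its proof is insensitive to which $\Sigma^1_1$ set sits there), choosing $\bbW:=F_0$, $S:=\bbP$ --- countable, hence $\fsigma$, and dense in $F_0$ by hypothesis, which is exactly what makes the auxiliary $n_s$ of Lemma~\ref{L4} well defined --- with $\overline{\Pi_1[E]}^{\tau_2}$ as the $\tau_2$-closed factor and $X$ as the ambient one. Its hypothesis $(\overline{\Pi_1[E]}^{\tau_2}\times X)\cap A^{\mathsf t}\ne\emptyset$ holds because $(y^*,x^*)$ lies in it. This produces continuous $g_0:F_0\to Y$ and $f_0:F_0\setminus\bbP\to X$ with $g_0(\alpha_n)\in\Pi_1[E]$ for all $n$ and $(f_0(\gamma),g_0(\gamma))\in A$ for all $\gamma\in F_0\setminus\bbP$. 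For each $n$ I then pick $u_n\in\overline{\Pi_0[A]}$ with $(u_n,g_0(\alpha_n))\in B$ (possible since $g_0(\alpha_n)\in\Pi_1[E]$), and a sequence $(a^n_m,b^n_m)_{m\in\omega}$ in $A$ with $a^n_m\to u_n$.

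Now set $f:\bbX\to X$ by $f|_{F_0\setminus\bbP}=f_0$, $f|_{F^{n+1}_{m,1}}\equiv a^n_m$ and $f|_{\bigcap_m F^{n+1}_{m,0}}\equiv u_n$, and $g:\bbY\to Y$ by $g|_{F_0}=g_0$ and $g|_{F^{n+1}_{m,1}}\equiv b^n_m$. Since $F_0$ and the $F_{n+1}$ are clopen, continuity need only be checked on each block; the sole nontrivial point is continuity of $f$ at a point $z\in\bigcap_m F^{n+1}_{m,0}$: any sequence tending to $z$ eventually enters $F^{n+1}_{m,0}$ for each fixed $m$, so the levels $k$ of its terms lying in the sets $F^{n+1}_{k,1}$ tend to infinity, whence the corresponding values $a^n_k$ tend to $u_n=f(z)$ --- an argument that uses no convergence of the schemes. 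It is then routine that $f\times g$ reduces $(\bbX,\bbY,\bbA,\bbB)$ to $(X,Y,A,B)$: the two diagonals constituting $\bbA$ land in $A$ by construction, and $(\alpha,\alpha_n)\in\bbB$ maps to $(u_n,g_0(\alpha_n))\in B$. I expect the real work to be concentrated precisely here: lining up the transposed application of Lemma~\ref{L4} (isolating $\Pi_1[E]$ as the set to $\tau_2$-close and $S=\bbP$ as the $\fsigma$ set inside $F_0$, and verifying the hypothesis), and the continuity of $f$ at the limit points of the schemes.

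For the ``exactly one'' clause, assume in addition that all left-branching schemes converge; then it suffices to show $\bbA$ is not separable from $\bbB$ by a $\close\times\fsigma$ set, since the conjunction of the two alternatives would let one pull such a separating rectangle back through a reduction. Convergence makes $\bigcup_m F^{n+1}_{m,1}$ dense in $F_{n+1}$, so $\Pi_0[\bbA]$ is dense in $\bbX$ and any closed set containing it equals $\bbX$; hence a $\fsigma$ set $D$ with $\bbA\subseteq\bbX\times D$ and $(\bbX\times D)\cap\bbB=\emptyset$ must contain $\Pi_1[\bbA]=\bbY\setminus\bbP$ and avoid $\bbP$ (because $(\bigcap_m F^{n+1}_{m,0})\times\{\alpha_n\}$ is a nonempty subset of $\bbB$), forcing $D=\bbY\setminus\bbP$. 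Then $\bbP$ is $\gdelta$ in $\bbY$, hence $\gdelta$ in $F_0$; but $F_0$ is perfect (as $\bbP$ is both dense and co-dense in it) and $\bbP$ is a countable dense subset of it, contradicting the Baire category theorem.
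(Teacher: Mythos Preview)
Your argument is correct and follows essentially the same route as the paper: both invoke Lemma~\ref{L3}(\ref{l1}) with $\xi=2$ to obtain $y^*\in\Pi_1[A]\cap\overline{\Pi_1[E]}^{\tau_2}$ for $E=(\overline{\Pi_0[A]}\times Y)\cap B$, then apply Lemma~\ref{L4} (with the factors swapped and $A^{\mathsf t}$ in the ``$B$'' slot) on $\bbW=F_0$, $S=\bbP$, $C=\Pi_1[E]$ to produce $f_0,g_0$, and finally extend over the $F_{n+1}$ via the points $u_n\in\overline{\Pi_0[A]}$ and approximating pairs $(a^n_m,b^n_m)\in A$. Your treatment of the ``exactly one'' clause is a mild rephrasing of the paper's: the paper observes directly that $\bigcap_m F^{n+1}_{m,0}\subseteq\overline{\Pi_0[\bbA]}\subseteq C$ and that $F_0\setminus\bbP\subseteq V$ forces some $\alpha_n\in V$ (since $F_0\setminus\bbP$ is not $\fsigma$), whereas you push this to $C=\bbX$ and $D=\bbY\setminus\bbP$ and then invoke Baire category---but the content is the same.
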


\begin{proof} 
As usual, we can suppose $X,Y$ to be recursively presented spaces, and $A,B$ to be $\Sigma^1_1$ sets. Suppose they are not separable. By a Lemma \ref{L3},
 there is a $y\in\Auno$ such that for any $\taudos$-open neighborhood $V$, $(\overline{\Acero}\times V)\cap B\neq \emptyset$. In other words, $\Auno\cap\overline{\Pi_1[(\overline{\Acero}\times Y)\cap B]}^\taudos\neq\emptyset$.

Note also that $\bbP$ is a $\fsigma$ subset of $F_0$, so it is the union of a $1$-disjoint family. Applying lemma \ref{L4}, let $f_0:F_0\backslash\bbP\to X$ and $g_0:F_0\to Y$ be the respective continuous functions.

Note that for each $n$, $g_0(\alpha_n)\in \Pi_1[(\overline{\Acero}\times Y)\cap B]$. Thus, there are $x_n\in X$ and for each $m$, $(x^n_m,y^n_m)\in A$, such that $(x_n,g_0(\alpha_n))\in B$ and $x^n_m\to x_n$ as $m\to\infty$.

With all these fixed, define the functions $f:\bbX\to X$ and $g:\bbY\to Y$ by:

\[
	f(\alpha)=\left\{\begin{array}{lc}
							f_0(\alpha) & \mbox{if } \alpha\in F_0,\\
							x_n & \mbox{if } \alpha\in \cap_{m\in\omega} F^{n+1}_{m,0},\\
							x^n_m & \mbox{if } \alpha\in F^{n+1}_{m,1},
							\end{array}\right.
\]
				
\[	g(\alpha)=\left\{\begin{array}{lc}
							g_0(\alpha) & \mbox{if } \alpha\in F_0,\\
							y^n_m & \mbox{if }\alpha\in F^{n+1}_{m,1}.
							\end{array}\right.
\]

These are obviously continuous and it is easy to see $f\times g$ is a reduction.

Suppose that the schemes converge, and that $\bbA\subseteq C\times V$ with $C$ closed and $V$ a $\fsigma$ set. Note that for any $n$,  if $\{\alpha\}=\cap_{m\in\omega} F^{n+1}_{m,0}$, then $\alpha\in \overline{\Pi_0[A]}$, so $\alpha\in C$. On the other hand, $\Pi_1[A]\cap F_0=F_0\backslash\bbP$ which is not $\fsigma$. In particular, there is $\alpha_n\in \bbP\cap V$. This shows that $(C\times V)\cap \bbB$ is not empty.
\end{proof}

Let $\bbW=\baire$, $F_n=N_n$, $\bbP=\{\alpha\in N_0|\forall^\infty i \alpha(i)=0\}$, $F^{n+1}_{m,0}=N_{(n+1)0^{m+1}}$ and $F^{n+1}_{m,1}=\cup_{k\neq 0}N_{(n+1)0^{m}k}$. In $\bbX\times\bbY$ define 

\[\bbA:=\Delta\big(\{\alpha\in\baire|\big(\alpha(0)=0 \land\exists^\infty i (\alpha(i)\neq 0)\big)\lor\big(\exists n,m,k((n+1)0^m(k+1)\subseteq \alpha)\big)\}\big),\]
and,
\[\bbB:=\big\{\big((n+1)0^\infty,\alpha_n\big)|n\in\omega\big\}.\]

So this provides us a concrete example for a basis in this case.

\nocite{*} 
\bibliographystyle{plain} 
\bibliography{Biblio}

\end{document}